\newtheorem{theorem}{Theorem}[section]
\newtheorem{lemma}[theorem]{Lemma}
\newtheorem{proposition}[theorem]{Proposition}
\newtheorem{corollary}[theorem]{Corollary}
\theoremstyle{definition}
\newtheorem{definition}[theorem]{Definition}
\theoremstyle{remark}
\newtheorem{remark}[theorem]{Remark}
\def\F{\mathbb{F}}
\def\N{\mathbb{N}}
\def\Z{\mathbb{Z}}
\def\bbT{\mathbb{T}}
\def\X{\mathbb{X}}
\def\cC{\mathcal{C}}
\def\II {\mathcal{I}}
\def\JJ {\mathcal{J}}
\def\cC{\mathcal{C}}
\def\cH{\mathcal{H}}
\def\cR{\mathcal R}
\def\bfalpha{\boldsymbol{\boldsymbol{\alpha}}}
\def\bfbeta{\boldsymbol{\boldsymbol{\beta}}}
\def\CFKUV{\CFK_\cR} 
\def\bfx{\mathbf{x}}
\def\bfy{\mathbf{y}}
\def\CF{\operatorname{CF}}
\def\CFh{\widehat{\operatorname{CF}}}
\def\HF{\operatorname{HF}}
\def\HFp{\operatorname{HF}^+}
\def\HFh{\widehat{\operatorname{HF}}}
\def\CFK{\operatorname{CFK}}
\def\CFKi{\operatorname{CFK}^{\infty}}
\def\CFKh{\widehat{\operatorname{CFK}}}
\def\HFKh{\widehat{\operatorname{HFK}}}
\def\Im{\operatorname{im}}
\def\d{\partial}
\def\varep{\varepsilon}
\def\co{\colon}
\def\J{\mathfrak{J}}
\def\K{\tilde{K}}
\def\CZhat{\widehat{\cC}_\Z}
\def\CZ{{\cC}_\Z}
\def\conn {\mathbin{\#}}
 \DeclareMathOperator{\gr}{gr}
\def\x {\mathcal{x}}
\def\U {\mathcal{U}}
\def\V {\mathcal{V}}
\def\C{\mathfrak{C}}
\def\D{\mathfrak{D}}
\definecolor{darkgreen}{rgb}{0,0.5,0}
\definecolor{purple}{rgb}{0.5,0,0.5}
\newcommand\blfootnote[1]{%
  \begingroup
  \renewcommand\thefootnote{}\footnote{#1}%
  \addtocounter{footnote}{-1}%
  \endgroup
}
\author[]{Hugo Zhou}
\address {School of Mathematics, Georgia Institute of Technology, Atlanta, GA 30332}
\email{hzhou92@gatech.edu}
\title{Homology concordance and An Infinite rank free subgroup}
\begin{document}
\maketitle
\makeatletter
\def\blfootnote{\gdef\@thefnmark{}\@footnotetext}
\makeatother
\blfootnote{The author was partially supported by NSF grant DMS-1552285.}

\begin{abstract}
Two knots are homology concordant if they are smoothly concordant in a homology cobordism. The group $\CZhat$ (resp. $\CZ$) was previously defined as the set of knots in homology spheres that bounds homology balls (resp. in $S^3$), modulo homology concordance. We prove  $\CZhat / \CZ$ contains a $\Z^{\infty}$ subgroup. We construct our family of examples by applying the filtered mapping cone formula to L-space knots, and prove linear independence with the help of the connected knot complex.
\end{abstract}

\section{Introduction}
\label{sec:intro_paper}

Two knots in $S^{3}$ are {\em smoothly concordant} if they cobound a smoothly embedded annulus in the product $S^{3}\times I$. One can consider the set of knots in $S^{3}$ modulo smooth concordance. This set becomes an abelian group called the {\em knot concordance group}, under the operation induced by connected sum. It is a well-known result that the knot concordance group contains a $\Z^{\infty}$ summand (see, for example, \cite{LevKnot}).

As a generalization of smooth concordance, the notion {\em homology concordance} is  studied in \cite{DR13} and \cite{HomLevineLidman}. This paper will build on the established results in \cite{HomLevineLidman} regarding the homology concordance classes of knots in integer homology spheres.

Given two knots $K \subset Y$, $K' \subset Y'$, where both $Y$ and $Y'$ are integer homology spheres that are homology cobordant to $S^{3}$, we call $K$ and $K'$  {\em homology concordant} if they cobound a smoothly embedded annulus in some homology cobordism between $Y$ and $Y'$, denoted $(Y, K) \sim (Y', K')$.

Let $\CZhat$ denote the group consists of pairs $(Y,K)$, where $Y$ is a homology null-cobordant homology $3$-sphere and $K$ a knot in $Y$, modulo homology concordance. The operation, still induced by connected sum now changes both the knot and the $3$-manifold: $(Y,K) \conn (Y',K')$ is given by $(Y \conn Y',K \conn K')$. The inverse of $(Y,K)$ is $(-Y,-K)$. 

Similarly, let $\CZ$ denote the group consists of pairs $(S^{3},K)$ modulo homology concordance, which is a natural subgroup of $\CZhat$.

One wants to study the group structure of $\CZhat$, since this is the group analogous to the knot concordance group for knots in homology spheres. Note that $\CZhat$ naturally inherits the complicated group structure from the knot concordance group. In fact, there is a canonical map from the knot concordance group to $\CZ$. Though whether this map is injective remains unknown, J. Levine's examples pass through and constitute a $\Z^\infty$ summand in $\CZ$ (For a construction of Levine's examples, see \cite{LevKnot} or \cite{LivinSurvey}). Due to this observation, we consider instead $\CZhat / \CZ $.  The quotient group $\CZhat / \CZ $ measures the ``difference''  between knots in $S^3$ and knots in arbitrary homology spheres that bound homology balls.  Adam Simon Levine showed in \cite{Lev14} that $\CZhat / \CZ $ is not trivial. Inspired by the study of the knot concordance group, we ask: Is $\CZhat / \CZ $ infinitely generated? Does it contain a $\Z^{\infty}$ subgroup?

 The first question has been given an affirmative answer in \cite{HomLevineLidman}. Moreover, two approaches to the question have been demonstrated. The first approach relies on an invariant $\theta (Y,K)$, defined as the maximal difference of $d$-invariants of $1/n$-surgeries on $K$ (For a definition of $d$-invariant see  \cite{OSAbso}). A family of pairs $(Y_j,K_j)$ was constructed whose $\theta (Y_j, K_j) \rightarrow \infty$ as $j$ approaches infinity. With some more argument, this shows the infinite generation of $\CZhat / \CZ $. However, it is unknown to the author at this point whether the pairs constructed in the proof have infinite order. Another approach is to generalize the knot Floer invariants including $\tau, \nu, \varep, \Upsilon$ which originally were only defined for knots in $S^3$, to homology concordance invariants.  An advantage of this approach is that many of these invariants still have the same additivity properties under connected sums of the knots in an arbitrary integer homology sphere as they do for knots in $S^{3}$. We will adopt this approach in the paper, and answer the second question affirmatively, as follows.
 
 \begin{theorem}\label{thm:main}
 $\CZhat / \CZ $ contains a $\Z^{\infty}$ subgroup.
 \end{theorem}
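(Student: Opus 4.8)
The plan is to produce a countable family of pairs $(Y_j, K_j) \in \CZhat$ and show that their images in $\CZhat / \CZ$ generate a free abelian subgroup of infinite rank. The natural source of examples, as the abstract indicates, is to take $K_j$ to be an L-space knot (or a suitable satellite/cable thereof) sitting inside a homology sphere $Y_j$ obtained by surgery, so that the full knot Floer complex $\CFK_\cR$ of $(Y_j, K_j)$ can be computed via the filtered mapping cone formula. The point of using L-space knots is that their mapping cone complexes are combinatorially rigid: the staircase shape of $\CFK(S^3, K)$ for an L-space knot $K$ forces the mapping cone over the surgery diagram to have a very controlled local structure, so one can read off the relevant concordance invariants (the $\varep$-, $\nu^+$-, or more precisely the $\Upsilon$-type or local-equivalence-class data) directly. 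I would choose the $Y_j$ and the surgery coefficients so that each $(Y_j, K_j)$ is detectably non-trivial in the quotient — e.g.\ so that its image under the $d$-invariant-type invariant $\theta$, or better its local equivalence class, differs from that of any knot in $S^3$.

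First I would set up the invariant of homology concordance that will do the detection. Rather than the numerical $\theta$ (which only gives infinite generation, not a $\Z^\infty$), I would pass to the \emph{connected knot complex} $\CFK_\cR$-local-equivalence invariant — the "connected complex" in the sense of Hom's $\varepsilon$-refinement / the Dai--Hom--Stoffregen--Truong local equivalence group — since this is a group homomorphism out of $\CZhat$ and is additive under connected sum. The key structural input is that knots in $S^3$ map into a distinguished subgroup (or their connected complexes have a recognizable form, e.g.\ they are locally equivalent to staircases), whereas the $(Y_j,K_j)$ built from the mapping cone formula will have connected complexes that are provably \emph{not} locally equivalent to any staircase. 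So the composite $\CZhat \to \{\text{local equiv.\ classes}\} $ kills $\CZ$ and hence factors through $\CZhat/\CZ$.

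The heart of the argument — and the main obstacle — is the linear independence step: showing that no nontrivial integer combination $\#_j \, a_j (Y_j,K_j)$ lies in $\CZ$, equivalently that the corresponding sum of connected complexes is not locally equivalent to that of a knot in $S^3$. Here I would use a "smallest surviving piece" argument: order the $K_j$ (say by genus, or by the length/step-size of their staircases) so that in any finite combination the contribution of the extreme term cannot be cancelled by the others. Concretely, I would compute a sequence of auxiliary numerical invariants (e.g.\ values $\Upsilon_{(Y_j,K_j)}(t)$ at carefully chosen $t$, or the $\phi_j$-type homomorphisms of Dai--Hom--Stoffregen--Truong applied to the connected complexes) and show that the matrix of $(j, k) \mapsto \phi_k(Y_j,K_j)$ is, after reordering, triangular with nonzero diagonal. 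This forces the $(Y_j,K_j)$ to be linearly independent over $\Z$ modulo $\CZ$. The delicate points will be (i) checking that the filtered mapping cone formula genuinely computes $\CFK_\cR$ of $(Y_j,K_j)$ — i.e.\ identifying the large-surgery complex and verifying the homology-sphere condition and that $Y_j$ bounds a homology ball — and (ii) getting enough control of the connected complex (as opposed to the full, typically huge, mapping cone complex) to evaluate the homomorphisms $\phi_k$; the connected-complex reduction is exactly what makes (ii) tractable, which is why it appears in the abstract.

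Finally I would assemble these pieces: the map $\CZhat/\CZ \to \bigoplus_k \Z$ (or into the local equivalence group, post-composed with the $\phi_k$) restricted to the subgroup generated by the $[(Y_j,K_j)]$ is, by the triangularity above, surjective onto an infinite-rank free abelian group, hence that subgroup itself contains — in fact is — a copy of $\Z^\infty$. This proves Theorem \ref{thm:main}.
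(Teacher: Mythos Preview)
Your high-level outline matches the paper's in spirit --- construct $(Y_j,K_j)$ from cores of surgeries on L-space knots, compute their knot Floer complexes via the filtered mapping cone formula, and use the connected complex to prove linear independence --- but there is a genuine gap at the detection step.

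You assert that the map $\CZhat \to \{\text{local equivalence classes}\}$ (or its post-composition with $\Upsilon$ or with DHST-type homomorphisms $\phi_k$) ``kills $\CZ$ and hence factors through $\CZhat/\CZ$''. This is false: local equivalence is highly nontrivial on knots in $S^3$ (the trefoil already has nontrivial local class, nontrivial $\Upsilon$, nontrivial $\phi_1$). Consequently your triangularity argument, even if the matrix $(\phi_k(Y_j,K_j))$ were triangular with nonzero diagonal, would only establish linear independence of the $(Y_j,K_j)$ in $\CZhat$ (equivalently in the local equivalence group), \emph{not} in $\CZhat/\CZ$. A nontrivial combination could a priori land on some complicated element of $\CZ$, and nothing in your setup rules that out.

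The paper handles this by finding obstructions that are specific to knots in $S^3$ rather than general concordance homomorphisms. First, the constructed $(Y_n,K_n)$ satisfy $\varep=0$ but $\tau=-1$; since $\varep(S^3,K)=0$ forces $\tau(S^3,K)=0$ (Proposition~\ref{prop:tauep}), this already shows each $(Y_n,K_n)$ has infinite order in the quotient and, by additivity, reduces the linear-independence problem to combinations $\sum C_{n_i}+\sum C_{m_i}^*$ with equally many positive and negative terms. The decisive step is then not a numerical homomorphism at all: the paper shows directly (Proposition~\ref{prop:linear}) that the connected complex of any such nontrivial combination has vertical homology of dimension $>1$, whereas for any knot in $S^3$ the vertical homology of the connected complex is one-dimensional (it computes $\HFh(S^3)$). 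That dimension count is the obstruction that genuinely separates $\CZhat$ from $\CZ$; your $\phi_k$-triangularity idea does not supply anything playing that role.
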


One of the main tools we use in the proof is the following concordance invariant. The invariant is more or less known to the experts. Since its definition has not yet appeared in the literature, we write it down for completeness.

\begin{theorem}\label{thm:cconn}
Given a knot $K$ in an integer homology sphere $Y$, suppose $C=CFK^{\infty}(Y,K)$ is the knot Floer complex. There is a homology concordance invariant $C_{\mathrm{conn}}$, taking value in isomorphism classes of absolutely-graded, $(i,j)$-filtered $\F[U,U^{-1}]$  chain complexes. Moreover, $C$ is filtered chain homotopy equivalent to $C_{\mathrm{conn}}\oplus A$ as a filtered chain complex, where $A$ is an acyclic complex.
\end{theorem}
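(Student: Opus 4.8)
The plan is to mimic Hendricks--Hom--Stoffregen--Zemke's construction of the connected Heegaard Floer homology, but at the level of the filtered knot Floer complex rather than the $U$-equivariant one. The key structural fact I would exploit is that, working over the field $\F=\F_2$, the filtered chain homotopy equivalence class of $CFK^\infty(Y,K)$ admits a direct sum decomposition into indecomposable pieces, exactly one of which is acyclic-free: more precisely, any two decompositions $C \simeq C_{\mathrm{conn}} \oplus A \simeq C_{\mathrm{conn}}' \oplus A'$ with $A, A'$ acyclic must have $C_{\mathrm{conn}} \simeq C_{\mathrm{conn}}'$. This is the analogue of the Krull--Schmidt property used in the connected homology literature; for finitely generated filtered $\F[U,U^{-1}]$-complexes it follows from the fact that the endomorphism ring of an indecomposable object is local.

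First I would set up the category: objects are finitely generated, absolutely $\Q$-graded, $(i,j)$-filtered complexes over $\F[U,U^{-1}]$ that are filtered chain homotopy equivalent to a finitely generated $\F[U]$-complex (so that $CFK^\infty$ of a knot in a homology sphere qualifies after a change of ring), and morphisms are filtered chain maps up to filtered chain homotopy. I would then prove a Krull--Schmidt theorem in this category, following the standard argument: decompose $C$ into indecomposables, observe that each indecomposable summand has local endomorphism ring (an endomorphism is either a filtered homotopy equivalence or is ``nilpotent'' in the appropriate sense up to homotopy), and deduce uniqueness of the decomposition up to permutation and filtered homotopy equivalence of the summands. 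Next, I would \emph{define} $C_{\mathrm{conn}}(Y,K)$ to be the direct sum of all indecomposable summands that are not acyclic, equivalently $C$ with a maximal acyclic summand split off. The stated decomposition $C \simeq C_{\mathrm{conn}} \oplus A$ is then immediate from the construction.

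It remains to check invariance under homology concordance. If $(Y,K)\sim (Y',K')$, then by the work of Zemke (functoriality of knot Floer homology under decorated concordances / the relevant homology-concordance version used in \cite{HomLevineLidman}) there are filtered chain maps $F\co C \to C'$ and $G \co C' \to C$ such that $G\circ F$ is filtered chain homotopic to $\mathrm{id}_C$ plus a map that factors through an acyclic complex, and similarly for $F \circ G$; in the language of the knot-Floer-theoretic ``local equivalence,'' $C$ and $C'$ are locally equivalent. I would then prove the categorical statement that local equivalence of $C$ and $C'$ forces $C_{\mathrm{conn}} \simeq C_{\mathrm{conn}}'$: split off maximal acyclic summands on both sides, restrict the local-equivalence maps, and use that a self-map of $C_{\mathrm{conn}}$ which is the identity up to a map through an acyclic complex must actually be a filtered homotopy equivalence (here the absence of an acyclic summand in $C_{\mathrm{conn}}$ is essential — a map factoring through an acyclic complex into something with no acyclic summand is filtered-null-homotopic, again by the local endomorphism ring property). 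Finally, absolute gradings are preserved throughout because all maps in sight are grading-preserving, so $C_{\mathrm{conn}}$ is well-defined as an absolutely graded filtered complex.

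The main obstacle I expect is establishing the Krull--Schmidt / cancellation property rigorously in the filtered setting: one has to be careful that ``indecomposable'' is taken up to filtered chain homotopy equivalence (not strict isomorphism), that the relevant complexes are genuinely finitely generated so that infinite decompositions do not occur, and that the endomorphism rings are actually local — the last point requires knowing that a filtered self-chain-homotopy-equivalence can be detected, e.g., by inducing an isomorphism on the associated graded homology together with a nilpotency statement for the complementary part. Once this algebra is in place, the concordance invariance is a formal consequence of Zemke's maps and the local-equivalence formalism already implicit in \cite{HomLevineLidman}.
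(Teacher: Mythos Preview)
Your route is genuinely different from the paper's. The paper does not invoke Krull--Schmidt at all; following \cite{HHL}, it defines $C_{\mathrm{conn}}$ as the image of a \emph{maximal self-local equivalence} $f\co C\to C$ (a filtered, grading-preserving chain map inducing an isomorphism on total homology, with kernel maximal among such maps). Well-definedness comes from the elementary observation that if $f,g$ are both maximal then $\ker(f\circ g)=\ker g$, so $f|_{\operatorname{im} g}$ is injective, hence an isomorphism by finite generation. Invariance under local equivalence is the same trick: given local equivalences $F\co C\to C'$, $G\co C'\to C$ and maximal $f,g$, the composite $G\circ g\circ F\circ f$ is a self-local equivalence of $C$, so maximality of $f$ forces $\ker(G\circ g\circ F\circ f)=\ker f$, whence $g\circ F|_{\operatorname{im} f}\co \operatorname{im} f\to\operatorname{im} g$ is injective; symmetry and finite generation finish it. The decomposition $C\cong C_{\mathrm{conn}}\oplus\ker f$ is then a one-line algebra check.

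Your Krull--Schmidt proposal is a reasonable alternative framework, but your invariance argument has a real gap. You assert that from a homology concordance one obtains $F,G$ with $G\circ F$ filtered homotopic to $\mathrm{id}_C$ plus a map factoring through an acyclic complex. That is not what local equivalence (Zemke's maps, as used in the paper) gives you: one only knows that $F$ and $G$ each induce isomorphisms on total homology, hence that $G\circ F-\mathrm{id}$ induces zero on homology. A filtered self-map inducing zero on $H_*$ need not factor through an acyclic complex up to filtered homotopy, so the step ``a self-map of $C_{\mathrm{conn}}$ which is the identity up to a map through an acyclic complex must be a filtered homotopy equivalence'' never gets off the ground. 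What you actually need is the statement that a self-local equivalence of a complex with no acyclic summand is a filtered isomorphism (equivalently, that $C_{\mathrm{conn}}$ in your sense coincides with $\operatorname{im} f$ for $f$ maximal). Proving that directly essentially rederives the paper's maximal-kernel argument; the Krull--Schmidt machinery does not shortcut it.
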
   

The invariant is defined in the same way as the connected Heegaard Floer homology in \cite{HHL}, ignoring the involution in the original definition. We describe the construction in Section $6$ over the ring $\F[\U,\V]$. The same construction works over the ring $\F[U,U^{-1}]$ as well.

Regarding $\tau$ and $\varep$ invariants, we review the definitions and properties in section \ref{sec:con}. The key observation is:

\begin{proposition}[Proposition 3.6(2) in \cite{Hom12}]\label{prop:tauep}
Suppose $K$ is a knot in $S^3$. If $\varep(S^3,K)=0$, then $\tau(S^3,K)=0$.
\end{proposition}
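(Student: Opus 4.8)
The plan is to deduce the statement by combining the definition of $\varepsilon$ with two standard descriptions of $\tau$ read off from the filtered chain homotopy type of $C=\CFKi(S^3,K)$.

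First I would recall the description of $\tau$ via a \emph{vertically simplified basis} $\{\mathbf{x}_i\}$ of $C$ over $\F[U,U^{-1}]$. In such a basis the generators not involved in a vertical arrow reduce to a single distinguished element $\mathbf{x}_0$, whose class generates $H_*(C\{i=0\})\cong\HFh(S^3)\cong\F$, and one has $\tau(S^3,K)=A(\mathbf{x}_0)$, the Alexander grading of $\mathbf{x}_0$. This is just an unwinding of the definition $\tau=\min\{s:H_*(C\{i=0,\,j\le s\})\to H_*(C\{i=0\})\text{ is onto}\}$: in a vertically simplified basis the truncated subcomplex $C\{i=0,\,j\le s\}$ has homology spanned by $[\mathbf{x}_0]$ (which is present once $s\ge A(\mathbf{x}_0)$) together with classes that bound in $C\{i=0\}$, so the map is onto exactly when $s\ge A(\mathbf{x}_0)$. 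Running the same argument after exchanging the two filtrations of $\CFKi$ --- which is a symmetry of the complex up to filtered chain homotopy equivalence, and which turns a vertically simplified basis into a \emph{horizontally simplified basis} $\{\mathbf{y}_i\}$ for $C$ --- yields the dual formula $\tau(S^3,K)=-A(\mathbf{y}_0)$, where $\mathbf{y}_0$ is the distinguished element of $\{\mathbf{y}_i\}$; concretely, placed so that its Alexander coordinate is $0$, the element $\mathbf{y}_0$ sits at algebraic coordinate $-A(\mathbf{y}_0)$, which is its Alexander grading in the swapped complex.

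Next I would invoke the definition of $\varepsilon$: starting from a vertically simplified basis with distinguished element $\mathbf{x}_0$, one passes to a horizontally simplified basis $\{\mathbf{y}_i\}$ containing $\mathbf{x}_0$ as a basis element, and exactly one of three possibilities occurs --- $\mathbf{x}_0=\mathbf{y}_0$, or $\mathbf{x}_0$ is the tail of a horizontal arrow, or $\mathbf{x}_0$ is the head of a horizontal arrow --- with $\varepsilon(S^3,K)$ equal to $0$, $1$, $-1$ respectively. Thus $\varepsilon(S^3,K)=0$ means exactly that there is a single generator $\mathbf{z}$ which is simultaneously the distinguished element of a vertically simplified basis and of a horizontally simplified basis.

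The conclusion is then immediate: for such a $\mathbf{z}=\mathbf{x}_0=\mathbf{y}_0$, the first formula gives $\tau(S^3,K)=A(\mathbf{z})$ and the second gives $\tau(S^3,K)=-A(\mathbf{z})$, hence $2\tau(S^3,K)=0$ and so $\tau(S^3,K)=0$. I do not expect a genuinely hard step here; the content sits entirely in the two descriptions of $\tau$, in the structural facts about simplified bases of $\CFKi$, and in the filtration-exchanging symmetry of the knot Floer complex. The main thing to get right is pinning down the grading and sign conventions --- which filtration plays which role, and the sign on $A$ --- so that the two expressions for $\tau$ are indeed the negatives of one another when evaluated on a common distinguished generator.
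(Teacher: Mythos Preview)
The paper does not give its own proof of this proposition; it simply records it as Proposition~3.6(2) from \cite{Hom12} and uses it as a black box. There is therefore nothing in the present paper to compare your argument against.

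That said, your proposal is essentially a reconstruction of Hom's original argument and is correct. The two ingredients you isolate---the formula $\tau=A(\mathbf{x}_0)$ from a vertically simplified basis, and its mirror $\tau=-A(\mathbf{y}_0)$ from a horizontally simplified basis via the $i\leftrightarrow j$ symmetry of $\CFKi$---together with the characterization of $\varepsilon=0$ as ``the distinguished vertical generator has no horizontal arrows'' (hence can be taken to be the distinguished horizontal generator as well), are precisely the pieces Hom uses. One caution: the step ``$\varepsilon=0$ means $\mathbf{x}_0=\mathbf{y}_0$'' is not merely definitional; it requires Hom's lemma that, starting from a vertically simplified basis with distinguished element $\mathbf{x}_0$, one can pass to a horizontally simplified basis in which $\mathbf{x}_0$ appears as a basis element. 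You implicitly invoke this, and it is worth flagging as the one nontrivial structural input. With that in hand, your final line $\tau=A(\mathbf{z})=-A(\mathbf{z})$ is exactly the intended conclusion.
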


We construct the $\Z^{\infty}$ subgroup by explicitly finding infinitely many generating pairs. For $n>1$, let $M_n$ denote $+1$-surgery on $T_{2,4n-1}$, the $(2,4n-1)$-torus knot, let $Y_n=M_n\conn -M_n$, and let $K_n\subset Y_n$ denote the connected sum of the core of the surgery in $M_n$ with the unknot in $-M_n$. 

\begin{proposition}\label{prop:kn}
For $n>1$ and $(Y_n,K_n)$ as above, we have the following,
\begin{enumerate}
\item \label{item:kntau}$\tau(Y_n,K_n)=-1$; 
\item \label{item:knep}$\varep(Y_n,K_n)=0$;
\item \label{item:kninde}$(Y_n,K_n)$ are linearly independent in $\CZhat  / \CZ $.
\end{enumerate}
\end{proposition}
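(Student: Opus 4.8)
The plan is to reduce all three parts to one computation --- that of $\CFKi(M_n,\mu_n)$, where $\mu_n\subset M_n$ is the core of the surgery torus --- and then to combine it with the $\tau$/$\varepsilon$ formalism and the connected complex of Theorem~\ref{thm:cconn}. By additivity of $\tau$ and $\varepsilon$ under $\conn$ (recalled in Section~\ref{sec:con}) and the vanishing of both invariants on $(-M_n,U)$, the first two parts reduce to $\tau(M_n,\mu_n)=-1$ and $\varepsilon(M_n,\mu_n)=0$. For these I would invoke the filtered mapping cone formula computing the knot Floer complex of the dual knot of a surgery: since $T_{2,4n-1}$ is an L-space knot, $\CFKi(T_{2,4n-1})$ is an explicit staircase, so the truncated mapping cone $\X_{+1}(T_{2,4n-1})$ with its induced $(i,j)$-filtration is completely explicit. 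From this model one reads off that the surviving generator of $\HFh$ lies in Alexander grading $-1$, giving $\tau=-1$, and that the vertically and horizontally distinguished cycles do not interact in the way that would make $\varepsilon$ nonzero, giving $\varepsilon=0$. This is a finite computation whose only delicate point is the bookkeeping for the filtration on the mapping cone.

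For the last part, suppose $\connsum_i a_i(Y_{n_i},K_{n_i})$ is homology concordant to a knot $(S^3,J)$, with the $n_i$ distinct and the $a_i$ not all zero; I want a contradiction. Every summand $\pm(Y_{n_i},K_{n_i})$ has $\varepsilon=0$ by the second part, and a connected sum of complexes with vanishing $\varepsilon$ again has vanishing $\varepsilon$, so $\varepsilon(S^3,J)=0$; then Proposition~\ref{prop:tauep} forces $\tau(S^3,J)=0$, while additivity of $\tau$ together with $\tau(Y_{n_i},K_{n_i})=-1$ gives $\tau(S^3,J)=-\sum_i a_i$. Hence $\sum_i a_i=0$. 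This disposes of every combination with $\sum_i a_i\neq 0$ and leaves the combinations with $\sum_i a_i=0$, on which $\tau$ and $\varepsilon$ are blind; here the connected complex enters.

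For these combinations the plan is to pass to local equivalence classes. The connected complex is a homology concordance invariant (Theorem~\ref{thm:cconn}), acyclic summands are annihilated by tensor product, and each $Y_n$ bounds a homology ball (so gradings are centered), so the local equivalence class of $\CFKi(\connsum_i a_i(Y_{n_i},K_{n_i}))$ equals $\bigotimes_i [C_{\mathrm{conn}}(Y_{n_i},K_{n_i})]^{\otimes a_i}$. Since the combination is homology concordant to a knot in $S^3$ with vanishing $\varepsilon$, and such a knot has trivial knot local equivalence class (equivalently, locally trivial $\CFKi$, by the theory of $\varepsilon$-equivalence \cite{Hom12}), this class must be trivial. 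So the proof reduces to the following claim, which I expect to be the main obstacle: the classes $[C_{\mathrm{conn}}(Y_n,K_n)]$, $n>1$, are linearly independent in the knot local equivalence group.

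To prove this I would first extract $C_{\mathrm{conn}}(Y_n,K_n)$ from the mapping cone model of $\CFKi(M_n,\mu_n)$ above by cancelling its box summands, expecting a staircase-like complex whose complexity strictly increases with $n$. I would then separate the resulting classes using a family of local equivalence homomorphisms --- for instance the piecewise-linear invariant $\Upsilon$, the $\varphi_j$ homomorphisms of Dai--Hom--Stoffregen--Truong, or a hands-on argument with the step data of the staircases --- arranged so that in any nontrivial tensor product the contribution of the largest index cannot be cancelled by the smaller ones. The mapping cone simplification and this non-cancellation argument carry essentially all the weight; once they are in place, all three parts follow from the formal properties assembled above.
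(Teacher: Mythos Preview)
Your plan for parts \eqref{item:kntau} and \eqref{item:knep} matches the paper's: compute $\CFKi(M_n,\mu_n)$ via the filtered mapping cone for the dual knot of $+1$-surgery on the L-space knot $T_{2,4n-1}$, read off $\tau=\nu=\nu'=-1$ (hence $\varepsilon=0$), and transport to $(Y_n,K_n)$ by additivity.

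The gap is in part \eqref{item:kninde}, and it is twofold. First, the assertion that a knot $J\subset S^3$ with $\varepsilon(J)=0$ has \emph{trivial local equivalence class} is not what \cite{Hom12} proves: Hom's $\varepsilon$-equivalence is strictly coarser than local equivalence over $\F[\U,\V]$, so you may only conclude that the combination is locally equivalent to \emph{some} complex of a knot in $S^3$, not to the trivial one. Second, and more seriously, your proposed separating invariants cannot see $n$. The connected complex $C_{\mathrm{conn}}(Y_n,K_n)$ has exactly two differentials, decorated by $\U^{n-1}\V^n$ and $\U^n\V^{n-1}$; for $n\ge 2$ both are divisible by $\U\V$, so over $\F[U,V]/(UV)$ the differential vanishes identically and the complexes for different $n$ become isomorphic. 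Hence the $\varphi_j$ of Dai--Hom--Stoffregen--Truong carry no information here. A direct computation likewise gives $\Upsilon_{C_n}(t)=|1-t|-1$ for every $n\ge 2$: the only cycles are $x_0,x_1$, whose filtration levels are independent of $n$, while the position of $y_1$ (the only thing that moves with $n$) is irrelevant since $y_1$ is not a cycle. So neither $\Upsilon$ nor the $\varphi_j$ can establish linear independence, and these complexes are not staircases in any sense that would let ``step data'' distinguish them.

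The paper sidesteps both issues. Rather than aiming for triviality of the local class, it uses the constraint that any knot in $S^3$ satisfies $\dim H_{\mathrm{vert}}(C_{\mathrm{conn}})=1$ (since $\HFh(S^3)\cong\F$). The substance of the argument is then a structural analysis of $C'=\sum C_{n_i}+\sum C^*_{m_j}$ over the full ring $\F[\U,\V]$, where the diagonal arrows survive: one isolates a ``saw-edge'' quotient $\C(k,n)$ inside $\sum_{i} C_{n_i}$, and exhibits two generators $\alpha,\beta$ of $(\U\V)^{-1}H_*(C')$ lying in distinct bigradings. Since every self-local equivalence preserves bigradings and is nonzero on such generators, $C'_{\mathrm{conn}}$ contains at least two generators; as $C'$ has no purely vertical arrows, this forces $\dim H_{\mathrm{vert}}(C'_{\mathrm{conn}})>1$. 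This is the ``hands-on'' argument you allude to, but the operative invariant is the rank of vertical homology of the connected complex over $\F[\U,\V]$, not any homomorphism that factors through $\F[U,V]/(UV)$.
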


 Theorem \ref{thm:main} directly follows from Proposition \ref{prop:kn} \eqref{item:kninde}. The construction of $(Y_n,K_n)$ and the evaluation of their $\tau$ and $\varep$ invariants are due to a computational result, Theorem \ref{thm:Lspace}.   
 
 \begin{remark}
 The quotient group $\CZhat/\CZ$ can also be interpreted in PL(piecewise-linear) terms. Recall that every knot in $S^{3}$ bounds a PL disk in $B^4$ by taking the cone. Such a disk is smooth outside the cone point.  In \cite{Lev14}, Adam Simon Levine gives a pair $(Y,K)$ such that $Y$ bounds a contractible manifold but $K$ does not bound a PL disk in any homology ball whose boundary is $Y$. Note that a knot $K$ in $Y$ bounds a PL disk if and only if $(Y,K)$ is trivial in $\CZhat/ \CZ$. One can see this by adding or deleting open balls near the cone points. Likewise, two
pairs $(Y_0, K_0),(Y_1, K_1) \in \CZhat$ differ by an element of $\CZ$ if and only if $K_0$ and $K_1$ cobound a PL
annulus in some homology cobordism from $Y_0$ to $Y_1$. Thus, the quotient $\CZhat/\CZ$ can be interpreted
as the group of knots in homology null-cobordant homology spheres modulo PL concordance in
homology cobordisms.
 \end{remark}

 \begin{remark}
 In the paper \cite{HomLevineLidman} (Remark 1.13.), a variation of $\CZhat$ was brought into discussion. Let $\CZhat'$ denote the subgroup of $\CZhat$ that consists of all pairs $(Y,K)$ where $Y$ bounds a homology ball $X$ in which $K$ is freely nulhomotopic (meaning $K$ is trivial in $\pi_1(X)$, or equivalently $K$ bounds a immersed disk in $X$). From the $4$-manifold perspective, $\CZhat'$ is arguably a more appropriate generalization of the knot concordance group, as it measures the failure of replacing immersed disks by embedded ones. It has been shown in \cite{Daemi} that $\CZhat'$ is a proper subgroup of $\CZhat$, answering a question raised up in \cite{HomLevineLidman}. Moreover, according to Corollary $1$ of \cite{Daemi}, each pair $(Y_n,K_n)$ in our construction lies in  $\CZhat \backslash \CZhat'$ since the core of the surgery solid torus normally generates the foundamental group of $+1$-surgery on $T_{2,4n-1}$. We do not know whether their linear combinations lie in $\CZhat'$ or not.
 \end{remark}

One can also consider the analogues of $\CZ$ and $\CZhat$ in the topological category. Namely, we say knots $(Y_0,K_0)$ and $(Y_1,K_1)$ are \emph{topologically homology concordant} if they cobound a locally flat embedded annulus in a topological homology cobordism between $Y_0$ and $Y_1$ (which need not carry any smooth structure), and we let $\cC_{\Z,\mathrm{top}}$ and $\widehat\cC_{\Z,\mathrm{top}}$ denote the corresponding concordance groups, as in \cite{DR13}. Note that every homology $3$-sphere bounds a contractible topological $4$-manifold, so there is no need to impose restriction on which pairs $(Y,K)$ are represented in $\widehat\cC_{\Z,\mathrm{top}}$ as we did in smooth case (Recall that we required the homology spheres to bound homology balls in the smooth version). There is strong evidence that the natural inclusion $\varphi\co \cC_{\Z,\mathrm{top}} \to \widehat\cC_{\Z,\mathrm{top}}$ is an isomorphism (see \cite{Davis}).

Define the natural map $\psi \co \CZhat \rightarrow \widehat\cC_{\Z,\mathrm{top}} $ by simply forgetting the smooth structure. The kernel of the map $\psi$ consists of the pairs that are topologically homology concordant to $(S^{3},O)$, where $O$ is the unknot in $S^{3}$. We can refine our construction of the infinite generating pairs such that they are in $\operatorname{ker} \psi$. The following result is suggested by JungHwan Park.

\begin{theorem}\label{thm:Jung}
For $n>1$, there exists $(Y'_n,J_n)\in \CZhat$ such that
\begin{enumerate}
\item $(Y'_n,J_n)\in \operatorname{ker} \psi$;
\item $(Y'_n,J_n)$ are linearly independent in $\CZhat/\CZ$.
 \end{enumerate}
\end{theorem}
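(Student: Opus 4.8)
The plan is to modify the examples $(Y_n, K_n)$ from Proposition \ref{prop:kn} by connect-summing with suitable knots in $S^3$ so as to land in $\ker\psi$ while preserving linear independence in $\CZhat/\CZ$. The starting observation is that $+1$-surgery on $T_{2,4n-1}$ is an L-space that bounds a smooth \emph{rational} homology ball but, crucially, is known (via Casson--Gordon or $d$-invariant obstructions, or simply because $T_{2,4n-1}$ is alternating and one can read this off from the surgery description) to be \emph{topologically} the boundary of a contractible $4$-manifold; more to the point, $M_n\conn -M_n = Y_n$ bounds a topological homology ball in which $K_n$ is topologically slice precisely when one can cancel the topological obstruction. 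Rather than wrestle with this directly, I would instead look for a knot $J_n$ of the form $K_n \conn (S^3, P_n)$, where $P_n$ is a knot in $S^3$ chosen so that $(Y'_n, J_n) = (Y_n, K_n\conn P_n)$ is topologically homology concordant to the unknot. The natural candidate: since $Y_n$ is topologically homology cobordant to $S^3$ (every homology sphere bounds a contractible topological $4$-manifold, and $Y_n\conn -Y_n$ bounds one built from two copies glued along $Y_n$), the image $\psi(Y_n, K_n)$ is the class of some honest knot $\kappa_n \subset S^3$ up to topological concordance; then take $P_n$ to be the reverse mirror of a knot representing $-[\kappa_n]$ in the topological concordance group, so that $\psi(Y'_n, J_n) = \psi(Y_n, K_n) + [P_n] = 0$.

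The key steps, in order, are: (1) verify that $\psi(Y_n, K_n)$ lies in the image of the topological concordance group of knots in $S^3$ inside $\widehat\cC_{\Z,\mathrm{top}}$ — this uses that $Y_n = M_n \conn -M_n$, that each $M_n$ bounds a contractible topological $4$-manifold $W_n$ (Freedman, since $M_n$ has trivial Alexander-polynomial-like obstruction because it arises from $\pm1$-surgery on a knot in $S^3$), and that gluing $W_n$ to $-W_n$ along $M_n$ exhibits the desired topological concordance carrying $K_n$ to a knot in $S^3$; (2) extract a representative $\kappa_n\in S^3$ with $\psi(Y_n,K_n) = [\kappa_n]$ and set $P_n = -\kappa_n$ (reverse mirror), $J_n = K_n \conn P_n$, $Y'_n = Y_n$, so that (1) of the theorem holds by additivity of $\psi$; (3) recompute $\tau$ and $\varep$: by additivity of $\tau$ under connected sum in homology spheres and Proposition \ref{prop:tauep}, and using that $P_n\subset S^3$ satisfies $\tau(P_n) = \tau(\kappa_n)$ with appropriate sign, argue $\tau(Y'_n, J_n) = \tau(Y_n,K_n) + \tau(S^3,P_n)$, and check this is nonzero (or otherwise that the $\varep$-and-$\tau$ package from Proposition \ref{prop:kn} survives the connected sum); (4) rerun the linear-independence argument of Proposition \ref{prop:kn}\eqref{item:kninde} — since $P_n \in \CZ$, the classes $(Y'_n, J_n)$ and $(Y_n, K_n)$ are equal in $\CZhat/\CZ$, so linear independence in the quotient is inherited for free.

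Step (4) is where the proof becomes essentially trivial once the setup is right: adding an element of $\CZ$ does not change the class in $\CZhat/\CZ$, so whatever distinguished the $(Y_n, K_n)$ in the quotient distinguishes the $(Y'_n, J_n)$ as well. The main obstacle is Step (1): one must genuinely produce the topological homology concordance from $(Y_n, K_n)$ to a knot in $S^3$, i.e. show that $\psi(Y_n, K_n)$ lies in $\Im(\cC_{\Z,\mathrm{top}} \to \widehat\cC_{\Z,\mathrm{top}})$, or equivalently (granting the evidence cited after Theorem \ref{thm:Jung} that $\varphi$ is an isomorphism) that $\psi(Y_n,K_n)$ is an arbitrary element which we can then cancel. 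Concretely the cleanest route is: $M_n$ bounds a smooth rational homology ball is false, but $M_n$ bounds a \emph{topological} contractible $4$-manifold $W_n$ by Freedman's theorem applied to the homology sphere $M_n$ (it has the homology of $S^3$); the core $K_n^0 \subset M_n$ of the surgery then becomes, after pushing into $W_n$ and using a Mayer--Vietoris / handle argument, a knot cobounding a locally flat annulus with some $\kappa_n \subset S^3$ inside $W_n \cup_{M_n}(-W_n) \setminus \mathring{B}^4 \cong S^3 \times I$ topologically. I would need to be careful that the annulus can be made disjoint from the ball removed to form the cobordism, but this is a standard general-position argument in the locally flat category (Freedman--Quinn). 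Granting this, defining $P_n$ as the reverse of $\kappa_n$ and invoking additivity finishes everything, with the caveat in Step (3) that I must double-check $\tau(Y'_n,J_n)\neq 0$; if the naive choice of $P_n$ kills $\tau$, I would instead take $P_n$ to be $-\kappa_n \conn T_{2,3}$ or similar to restore a nonzero $\tau$ while keeping $P_n\in\CZ$ and $\psi(Y'_n,J_n)=[T_{2,3}]$ — but then I lose $\ker\psi$, so the honest fix is to observe that the linear-independence argument of Proposition \ref{prop:kn}\eqref{item:kninde} is what is actually needed, not nonvanishing of $\tau$ of a single class, and that argument goes through verbatim in the quotient.
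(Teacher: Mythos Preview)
Your Step~(4) is correct and is indeed the easy part: connect-summing with any knot in $S^3$ does not change the class in $\CZhat/\CZ$, so linear independence would be inherited automatically. The real content is Step~(1), and here there is a genuine gap. You need $\psi(Y_n,K_n)$ to lie in the image of $\varphi\co \cC_{\Z,\mathrm{top}}\to \widehat\cC_{\Z,\mathrm{top}}$, but you never actually establish this. Your sketch ``push the core into $W_n$ and use a Mayer--Vietoris/handle argument'' does not produce a locally flat annulus from the surgery core to a knot in $S^3$; Freedman only gives you the contractible filling $W_n$, not any control over how the core sits inside it. (Also, $W_n\cup_{M_n}(-W_n)$ is a closed manifold homeomorphic to $S^4$, so removing a ball gives $B^4$, not $S^3\times I$.) You then explicitly fall back on ``granting the evidence \dots\ that $\varphi$ is an isomorphism,'' but the paper only cites this as strong evidence, not as a theorem; surjectivity of $\varphi$ is precisely the unresolved point, so you cannot invoke it.

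The paper avoids this problem by a different construction: rather than starting from $T_{2,4n-1}$ and trying to correct afterward, it \emph{replaces} $T_{2,4n-1}$ by the iterated Whitehead double $D_{2n-1}=(2n-1)D$ of the trefoil. By the Hedden--Kim--Livingston result, $\CFKi(S^3,D_{2n-1})\simeq \CFKi(S^3,T_{2,4n-1})\oplus A$ with $A$ acyclic, so the filtered mapping cone computation and hence the linear-independence argument go through unchanged for the core of $+1$-surgery on $D_{2n-1}$. The payoff is that $D_{2n-1}$ has Alexander polynomial one and is therefore topologically slice; one can then build, by an explicit handle/blow-down construction, a contractible topological $4$-manifold bounded by $S^3_{+1}(D_{2n-1})$ in which the surgery core bounds a locally flat disk. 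This gives $\ker\psi$ directly, with no appeal to the conjectural surjectivity of $\varphi$.
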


The proof of Theorem \ref{thm:Jung} is in Section 3.

\subsection*{Organization}
In Section $2$ we review invariants $\tau$ and $\varep$, generalized as concordance invariants. In Section $3, 4$ and $5$ we present a computational result of core of surgery on L-space knots using the filtered mapping cone formula by Hedden and Levine \cite{HeddenLevine}, which will in turn give the proof of Proposition \ref{prop:kn} (\ref{item:kntau}),(\ref{item:knep}). In Section $6$ we give the definition of the connected knot complex over the ring $\F[\U,\V]$. In Section $7$ we prove the linear independence of the family $(Y_n,K_n)$.

\subsection*{Acknowledgements}
The author wants to thank his advisor Jennifer Hom for her patient guidance throughout the project. The author is grateful to JungHwan Park for explaining Theorem \ref{thm:Jung} and would like to thank Tye Lidman, 
Adam Simon Levine and Chuck Livingston for helpful comments. The author also thanks the referee for helpful comments.

\section{Concordance invariants from knot floer homology}
\label{sec:con}

In this section, we review the concordance invariants that will be useful for the construction, defined in \cite{HomLevineLidman} using tools from \cite{Zem16}. We assume that the reader is familiar with the knot Floer invariant package in $S^{3}$, see \cite{HomSurvey} for a survey on the topic. 

Recall that for a knot $K$ in an integer homology sphere $Y$, over the ring $\F[U]$ knot floer complex $C=\CFKi(Y,K)$ decomposes as a vector space $C=\bigoplus_{i,j\in \Z}C(i,j)$. Up to chain homotopy we can choose the filtered basis such that it is reduced, namely no differential preserves the $(i,j)$ coordinate. Let $X\subset \Z\oplus \Z$ be a set such that if $a,c\in X$ and $ a\prec b \prec c$, then $b \in X$, where $\prec$ is the natural partial order on $\Z\oplus \Z$. Let $CX = \bigoplus_{(i,j)\in X}C(i,j) $. $CX$ is naturally a subquotient complex of $C$. 

Consider the maps
\begin{align*}
	\iota_s &\co C \{ i=0, j \leq s \} \rightarrow C \{ i = 0 \}, \\
	v_s &\co C \{ \max(i, j-s)=0 \} \rightarrow C \{ i = 0 \}, \\
	v'_s &\co C\{ i = 0\} \rightarrow C\{ \min(i, j-s) = 0 \}, \\
\end{align*}
where $\iota_s$ is inclusion, $v_s$ consists of quotienting by $C\{i < 0, j=s\}$ followed by inclusion and $v'_s$ consists of quotienting by $C\{ i=0,  j < s\}$ followed by inclusion. Recall that $C\{i=0\} \simeq \CFh(Y)$ and $C\{i\geq 0\} \simeq \CF^+(Y)$. Also, let $\rho \co \CFh(Y) \rightarrow \CF^+(Y)$ denote inclusion.

\begin{definition}[Definition 4.2 in \cite{HomLevineLidman}] \label{def:taunu}
Let $K$ be a knot in an integer homology sphere $Y$. Define
\begin{align*}
	 \tau(Y,K) &= \min \{ s \mid \Im (\rho_* \circ \iota_{s*}) \cap U^N \HF^+(Y) \neq 0 \ \forall N \gg 0\} \\
	 \nu(Y,K) &= \min \{ s \mid \Im (\rho_* \circ v_{s*}) \cap U^N \HF^+(Y) \neq 0 \ \forall  N \gg 0\} \\
	  \nu'(Y,K) &= \max \{ s \mid v'_{s*}(x)\neq 0 \ \forall x \in \HFh(Y) \textup{ s.t. } \rho_*(x) \neq 0 \textup{ and } \rho_*(x) \in U^N \HF^+(Y) \ \forall N \gg 0  \}.
\end{align*}
\end{definition}

The definition of $\varep$ relies on the relation between $\tau,\nu$ and $\nu'$; we have the following lemma:

\begin{lemma} [Lemma 4.5 in \cite{HomLevineLidman}]
Let $K$ be a knot in an integer homology sphere $Y$. The following three cases are exhaustive and mutually exclusive:
\begin{itemize}
	\item $\nu(Y,K)=\tau(Y,K)+1$ and $\nu'(Y,K)=\tau(Y,K)$,
	\item $\nu(Y,K)=\tau(Y,K)$ and $\nu'(Y,K)=\tau(Y,K)-1$,
	\item $\nu(Y,K)=\tau(Y,K)$ and $\nu'(Y,K)=\tau(Y,K)$.
\end{itemize}
\end{lemma}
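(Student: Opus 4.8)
The plan is to reduce the statement to three assertions and dispatch each in turn: (i) $\nu(Y,K)\in\{\tau(Y,K),\tau(Y,K)+1\}$; (ii) $\nu'(Y,K)\in\{\tau(Y,K)-1,\tau(Y,K)\}$; and (iii) the combination $\nu(Y,K)=\tau(Y,K)+1$ together with $\nu'(Y,K)=\tau(Y,K)-1$ does not occur. Granting these, mutual exclusivity of the three displayed cases is automatic, since the pairs $(\nu,\nu')$ occurring in them are pairwise distinct, and exhaustiveness is precisely (i)+(ii)+(iii). Throughout write $\tau=\tau(Y,K)$ and $C=\CFKi(Y,K)$.

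For (i), the starting observation is that $v_s$ factors as $v_s=\iota_s\circ q_s$, where $q_s\co C\{\max(i,j-s)=0\}\to C\{i=0,j\le s\}$ is the quotient by the subcomplex $C\{i<0,j=s\}$; this is just a rereading of the definition of $v_s$. Hence $\Im(\rho_*\circ v_{s*})\subseteq\Im(\rho_*\circ\iota_{s*})$, so any $s$ satisfying the condition in the definition of $\nu$ also satisfies the one in the definition of $\tau$, which yields $\nu\ge\tau$. For the reverse inequality, note that $C\{i=0,j\le\tau\}$ is a subcomplex of $C\{\max(i,j-(\tau+1))=0\}$ and that composing this inclusion with $v_{\tau+1}$ recovers $\iota_\tau$ (the quotient $q_{\tau+1}$ restricts to the identity on this subcomplex, which has no component in the vertical arm $C\{i<0,j=\tau+1\}$). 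Therefore $\Im(\rho_*\circ v_{(\tau+1)*})\supseteq\Im(\rho_*\circ\iota_{\tau*})$, and the right-hand side meets $U^{N}\HFp(Y)$ for all $N\gg0$ by the definition of $\tau$; so $s=\tau+1$ satisfies the condition defining $\nu$, giving $\nu\le\tau+1$.

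For (ii), I would invoke the conjugation symmetry $\CFKi(-Y,-K)\simeq\CFKi(Y,K)^{*}$: under it $v'_s$ for $(Y,K)$ corresponds, up to an overall grading shift, to the $\F$-dual of $v_{-s}$ for $(-Y,-K)$, and the condition defining $\nu'(Y,K)$ at the value $s$ becomes the one defining $\nu(-Y,-K)$ at the value $-s$; so $\nu'(Y,K)=-\nu(-Y,-K)$, and combining with (i) and $\tau(-Y,-K)=-\tau$ gives (ii). (Alternatively, one can mimic the proof of (i) with $C\{\min(i,j-s)=0\}$ in place of $C\{\max(i,j-s)=0\}$.) For (iii), by (ii) it suffices to show that $\nu=\tau+1$ forces the condition defining $\nu'(Y,K)$ to hold at $s=\tau$. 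Write $v'_\tau=\iota'_\tau\circ p_\tau$ with $p_\tau\co C\{i=0\}\to C\{i=0,j\ge\tau\}$ the quotient by $C\{i=0,j\le\tau-1\}$ and $\iota'_\tau\co C\{i=0,j\ge\tau\}\hookrightarrow C\{\min(i,j-\tau)=0\}$ the inclusion, and take any $x\in\HFh(Y)$ with $\rho_*(x)\ne0$ and $\rho_*(x)\in U^N\HFp(Y)$ for all $N\gg0$. The long exact sequence of $0\to C\{i=0,j\le\tau-1\}\to C\{i=0\}\to C\{i=0,j\ge\tau\}\to0$ gives $\ker p_{\tau*}=\Im\iota_{(\tau-1)*}$; were $p_{\tau*}(x)=0$, then $\Im(\rho_*\circ\iota_{(\tau-1)*})$ would meet $U^N\HFp(Y)$ for all $N\gg0$, contradicting the minimality of $\tau$, so $p_{\tau*}(x)\ne0$. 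It remains to see that $p_{\tau*}(x)$ survives under $\iota'_{\tau*}$, and this is where the hypothesis $\nu=\tau+1$ enters: it says $v_{\tau*}$ misses $U^N\HFp(Y)$, and because $\HFi(Y)\cong\F[U,U^{-1}]$ has rank one the ``tower'' that $v_{\tau*}$ fails to hit and the essential class that $\iota'_{\tau*}$ would have to annihilate are detected by the same one-dimensional piece of homology — so, tracing the interaction of the two constructions (for instance by comparing the connecting homomorphism of $0\to C\{i=0,j\ge\tau\}\to C\{\min(i,j-\tau)=0\}\to C\{i>0,j=\tau\}\to0$ with that of the analogous sequence for $C\{\max(i,j-\tau)=0\}$, which are matched up by the symmetry of (ii) together with $U$-periodicity), one concludes $\iota'_{\tau*}(p_{\tau*}(x))\ne0$, hence $v'_{\tau*}(x)\ne0$, as needed.

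The step I expect to be the main obstacle is precisely this last one in (iii): identifying the two connecting homomorphisms across the conjugation symmetry while keeping careful track of grading shifts and $U$-periodicity, and being precise about the difference between ``meets $U^N\HFp(Y)$ for all $N\gg0$'' and ``hits a prescribed generator of the tower.'' The whole argument should be run with a reduced model of $C$ over $\F[U,U^{-1}]$ so that no finiteness feature special to $S^3$ is used; the only place where it is essential that $Y$ is an integer homology sphere is the rank-one statement $\HFi(Y)\cong\F[U,U^{-1}]$, which is exactly what makes the two otherwise-unrelated conditions in (iii) incompatible.
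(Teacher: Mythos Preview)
The paper does not prove this lemma; it is quoted as Lemma~4.5 of \cite{HomLevineLidman} and used as a black box to set up the definition of $\varepsilon$. So there is nothing in this paper to compare your argument against.

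On its own merits, your outline is sound through (i), and the duality $\nu'(Y,K)=-\nu(-Y,-K)$ in (ii) is the right idea. The genuine gap is precisely where you flag it: the second half of (iii). You show $p_{\tau*}(x)\ne 0$ cleanly, but the passage from $\nu=\tau+1$ to $\iota'_{\tau*}(p_{\tau*}(x))\ne 0$ is not justified. The phrase ``matched up by the symmetry of (ii) together with $U$-periodicity'' is a heuristic, not a proof: the two short exact sequences you invoke live on different subquotients (one of $C$, the other effectively of $C^*$ after conjugation), and the symmetry of (ii) swaps $(Y,K)$ for $(-Y,-K)$, so both the underlying complex and the distinguished class $x$ change simultaneously. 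To make this rigorous you would need to identify $\ker\iota'_{\tau*}$ explicitly (as the image of the connecting map from $H_*(C\{i>0,\,j=\tau\})$) with an obstruction computable from $v_\tau$, and then verify that $p_{\tau*}(x)$ avoids it under the hypothesis that $v_{\tau*}$ misses the tower. That is a real chain-level argument you have not supplied. Since the statement is imported from \cite{HomLevineLidman}, you should either consult that source for the actual proof or carry out the comparison explicitly before regarding (iii) as complete.
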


The three cases in the above lemma correspond to the three different values of $\varep$. 

\begin{definition} [Definition 4.6 in \cite{HomLevineLidman}]
Let $K$ be a knot in an integer homology sphere $Y$. Define
\[	\varep(Y,K) =
		\begin{cases}
			-1 \quad & \text{if } \nu(Y,K)=\tau(Y,K)+1, \\
			1 \quad & \text{if } \nu'(Y,K)=\tau(Y,K)-1, \\
			0 \quad & \text{otherwise}.
		\end{cases}
\]
\end{definition}

The following properties will be especially useful.
 
 \begin{proposition}[Propositions 4.7, 4.10, 4.11 in \cite{HomLevineLidman}] \label{prop:taueppro}
Both $\tau, \varep$ are homology concordance invariants. Moreover, given pairs $(Y, K)$ and $(Y', K')$ as group elements in $\CZhat$,  
\begin{enumerate}
	\item $\tau(-Y,K) = -\tau(Y,K)$.
	\item $\tau (Y \conn Y', K \conn K') = \tau(Y,K) + \tau(Y',K')$.
	\item \label{it:ep(-Y,K)} $\varep(-Y,K) = -\varep(Y,K)$.
	\item \label{it:epadd1} If $\varep(Y, K)=\varep(Y', K')$, then $\varep(Y \conn Y', K \conn K') = \varep(Y, K)$.
	\item \label{it:epadd2} If $\varep(Y, K)=0$, then $\varep(Y \conn Y', K \conn K') = \varep(Y', K')$.
\end{enumerate}
\end{proposition}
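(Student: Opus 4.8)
The plan is to establish all five statements by reducing them to the corresponding known facts for knots in $S^3$, via the behavior of the relevant chain complexes under orientation reversal and connected sum. First I would recall that, just as in the classical setting, the knot Floer complex satisfies $\CFKi(-Y,K) \simeq \CFKi(Y,K)^*$ (the dual complex, with $(i,j)$-filtration levels negated) and $\CFKi(Y\conn Y', K\conn K') \simeq \CFKi(Y,K)\otimes_{\F[U,U^{-1}]}\CFKi(Y',K')$ as filtered complexes; these are proved in \cite{Zem16} and are the inputs that make the whole invariant package behave well for knots in homology spheres. The homology concordance invariance of $\tau$ and $\varep$ is then exactly the content of \cite{HomLevineLidman} and I would simply cite it; the substance here is the additivity and duality under $\conn$ and orientation reversal, which I would prove by the same arguments Hom gives in \cite{Hom12} for knots in $S^3$, the point being that those arguments only use the algebra of reduced, $(i,j)$-filtered $\F[U,U^{-1}]$-complexes together with the tensor-product and duality formulas above, not any feature special to $S^3$.

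For item (1), I would observe that the maps $\iota_s$ and $\rho$ dualize in the expected way: under the identification $\CFKi(-Y,K) \simeq \CFKi(Y,K)^*$, the defining condition for $\tau(-Y,K)$ becomes the negated condition for $\tau(Y,K)$, giving $\tau(-Y,K) = -\tau(Y,K)$. Item (3) is the analogous statement for $\varep$, obtained by dualizing the trichotomy lemma: $\nu'$ and $\nu$ swap roles (up to the appropriate sign shift) under passing to the dual complex, so the case $\nu = \tau+1$ for $(Y,K)$ becomes the case $\nu' = \tau - 1$ for $(-Y,K)$ and vice versa, which is precisely $\varep(-Y,K) = -\varep(Y,K)$. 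For item (2), additivity of $\tau$, I would run Ozsv\'ath--Szab\'o's original argument (as adapted by Hom): using the K\"unneth formula one shows that a cycle generating $\widehat{\HF}$ of the connected sum and surviving to $U^N\HF^+$ can be taken to be a tensor product of such generators for the two factors, at filtration level $\tau(Y,K)+\tau(Y',K')$, and conversely no lower filtration level works; this is where one must be a little careful, and it is the step I expect to be the main obstacle, since one needs the $\varep$-type refinement to rule out the possibility of a "diagonal" cancellation lowering the total filtration level. Concretely, the standard fix is to first reduce to a model complex using $\varep$ and then apply the K\"unneth argument to that model.

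Items (4) and (5) are the connected-sum behavior of $\varep$ and would follow from item (2) together with the structure theory of \cite{Hom12}: if $\varep(Y,K) = \varepsilon(Y',K')$ one builds an explicit filtered chain homotopy equivalence showing the tensor product has the same $\varep$, and if $\varep(Y,K) = 0$ then $\CFKi(Y,K)$ is, in the relevant sense, filtered chain homotopy equivalent to a complex on which tensoring does not change the $\nu$-versus-$\nu'$-versus-$\tau$ pattern, so $\varep$ of the product equals $\varep(Y',K')$. Again the only thing to check is that Hom's manipulations are purely algebraic statements about reduced $(i,j)$-filtered $\F[U,U^{-1}]$-complexes and hence transfer verbatim once the K\"unneth and duality formulas are in hand. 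Since all of this is carried out in detail in \cite{HomLevineLidman}, in the actual write-up I would keep the argument brief and mainly point to Propositions 4.7, 4.10 and 4.11 there, indicating the $S^3$ analogues in \cite{Hom12} that they generalize.
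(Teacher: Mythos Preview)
The paper does not prove this proposition at all: it is stated with the attribution ``Propositions 4.7, 4.10, 4.11 in \cite{HomLevineLidman}'' and no argument is given. Your proposal ultimately lands in the same place (``in the actual write-up I would keep the argument brief and mainly point to Propositions 4.7, 4.10 and 4.11 there''), so in that sense it matches the paper exactly. The preceding sketch you give of how the proofs in \cite{HomLevineLidman} go---duality of $\CFKi$ under orientation reversal, the K\"unneth formula for connected sum, and then Hom's algebraic arguments from \cite{Hom12} carried over verbatim---is a reasonable summary of what happens there, but it is extra content relative to what the present paper actually contains.
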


Lastly, we recall the following obstruction to knots in $\CZ$ observed in \cite{HomLevineLidman}:

\begin{corollary}
Let $(Y, K)$ be an element in $\CZhat$. If $\varep (Y,K)=0$, but $\tau (Y,K) \neq 0$, then $(Y,K)$ generates a $\Z$-subgroup in  $\CZhat / \CZ$.
\end{corollary}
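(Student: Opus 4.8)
The plan is to prove that $(Y,K)$ represents an element of infinite order in $\CZhat/\CZ$; since $\CZhat/\CZ$ is abelian, the subgroup it generates is then automatically isomorphic to $\Z$. I would argue by contradiction: suppose some nonzero multiple $m\cdot(Y,K)$ lies in $\CZ$, i.e.\ that $m\cdot(Y,K)$ is homology concordant to $(S^3,K')$ for some knot $K'\subset S^3$. Replacing $(Y,K)$ by its inverse $(-Y,-K)$ if necessary, I may assume $m\ge 1$; this replacement is harmless because, by Proposition~\ref{prop:taueppro}, $\tau(-Y,-K)=-\tau(Y,K)\ne 0$ and $\varep(-Y,-K)=-\varep(Y,K)=0$, so the hypotheses on $(Y,K)$ are preserved.

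The first step is to compute the two invariants on the $m$-fold sum. For $\varep$: starting from $\varep(Y,K)=0$ and applying part~\eqref{it:epadd2} of Proposition~\ref{prop:taueppro} to strip off one factor at a time, induction on $m$ gives $\varep\bigl(m\cdot(Y,K)\bigr)=0$. For $\tau$: additivity under connected sum (Proposition~\ref{prop:taueppro}) gives $\tau\bigl(m\cdot(Y,K)\bigr)=m\,\tau(Y,K)$, which is nonzero since $\tau(Y,K)\ne 0$ and $m\ge 1$.

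The second step is to invoke concordance invariance. Since $\tau$ and $\varep$ are homology concordance invariants (Proposition~\ref{prop:taueppro}) and $m\cdot(Y,K)$ is homology concordant to $(S^3,K')$, we get $\varep(S^3,K')=0$ and $\tau(S^3,K')=m\,\tau(Y,K)\ne 0$. But Proposition~\ref{prop:tauep} says that a knot in $S^3$ with vanishing $\varep$ has vanishing $\tau$ --- a contradiction. Hence no nonzero multiple of $(Y,K)$ is in $\CZ$, so $(Y,K)$ has infinite order in $\CZhat/\CZ$ and generates a $\Z$-subgroup.

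I do not expect any genuine obstacle here: the corollary is a purely formal consequence of the additivity and invariance properties collected in Proposition~\ref{prop:taueppro} together with Proposition~\ref{prop:tauep}. The only point that warrants a sentence of care is the $\varep$-bookkeeping in the first step --- that $\varep$ of an iterated connected sum of copies of a fixed $\varep=0$ element is again $0$ --- which is the straightforward induction on $m$ using Proposition~\ref{prop:taueppro}\eqref{it:epadd2}.
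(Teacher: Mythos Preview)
Your proof is correct and follows essentially the same approach as the paper: compute that $\varep$ of the $m$-fold connected sum vanishes while $\tau$ equals $m\,\tau(Y,K)\neq 0$, then invoke Proposition~\ref{prop:tauep} to conclude no such multiple can lie in $\CZ$. One tiny remark: your reduction to $m\ge 1$ via the inverse is harmless, but the citation of Proposition~\ref{prop:taueppro} for $\tau(-Y,-K)$ and $\varep(-Y,-K)$ is slightly indirect (the proposition is stated for $(-Y,K)$); it is quicker to note that $\CZ$ is a subgroup, so $m\cdot(Y,K)\in\CZ$ iff $|m|\cdot(Y,K)\in\CZ$.
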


\begin{proof}
According to Proposition \ref{prop:tauep}, $(Y,K)$ is not homology concordant to any knot in $S^{3}$. In other words, $(Y,K)$ represents a non-trivial element in  $\CZhat / \CZ$. Suppose $\tau(Y,K)=a$, where $a \neq 0$. Now by Proposition \ref{prop:taueppro}, $\varep(\conn_{n}(Y,K))=0$ while $\tau(\conn_{n}(Y,K))=na$, and this completes the proof.
\end{proof}

\section{The core of surgery on L-space knots}
\label{sec:Lspace_intro}
Starting from this section, we will present the computational result that leads to our construction of the $\Z^{\infty}$-subgroup. It relies on the knot Floer homology of the core of a Dehn surgery on L-space knots. We now explain the context and give a more precise description of the result.

A knot $K \subset S^3$ is called an {\em L-space knot} if the surgery $S^3_n(K)$ is an L-space for $n$ large enough. These knots have relatively simple knot Floer complexes that are determined by the Alexander polynomial. See Section 7 of \cite{HendricksManolescu} for a more detailed review regarding L-space knots. Since their notations give a certain convenience in representing the symmetry relation of knot Floer complex of L-space knots, we will borrow them in this section and throughout the computation in the next two sections.

In \cite{OSLens}, Ozsv\'ath and Szab\'o  proved that if $K$ is an L-space knot, then the Alexander polynomial of $K$ is of the form
$$ \Delta_K(t) = (-1)^m + \sum_{i=1}^m (-1)^{m-i}(t^{n_i} + t^{-n_i})$$
for a sequence of positive integers $0 < n_1 < n_2 < \dots < n_m.$ Here, $n_m = g(K)$ is the genus of $K$. Let $n(K) \geqslant 0$ be the quantity
$$ n(K):= n_m - n_{m-1} + \dots + (-1)^{m-2} n_2 + (-1)^{m-1} n_1.$$ Furthermore, let $\ell_s = n_s - n_{s-1}$. Since the unknot is the only L-space knot with $n(K) =0$ (whose dual knot is the unknot in $S^3$), we will only consider knots with $n(K) \geqslant 1$ for our computational result.

L-space knots have a knot Floer complex where differentials form a ``staircase'' .  The knot Floer complex consists of $2m+1$ generators $x_0, x^1_1, x^2_1,..., x^1_m,x^2_m$. The $(i,j)$-filtration of $x_0, x^1_m$ are $(0,0)$, $(-n(K),g(K)-n(K))$, respectively. The filtration of $x^t_{m-2s}$ and $x^t_{m-(2s+1)} $ differ only in $i$-filtration by $\ell_{m-2s}$ and the filtration of $x^t_{m-(2s+1)}$ and $x^t_{m-(2s+2)} $ differ only in $j$-filtration by $\ell_{m-(2s+1)}$.   If $m$ is odd,  the differentials are
\begin{align*}
\d(x_0) &= x_1^1+x_1^2 & \d(x_s^t) &= x_{s-1}^t+x_{s+1}^t \text{ for } s>0 \text{ even},\ t \in \{1,2\}
\end{align*}
\noindent whereas if $m$ is even, the differentials are
\begin{align*}
\d(x_1^t) &= x_0 + x_2^t & \d(x_s^t) &= x_{s-1}^t+x_{s+1}^t \text{ for } s>1 \text{ odd},\ t \in \{1,2\}.
\end{align*}
Observe that $n(K)$ is the total length of the horizontal arrows in the top half of the complex.

Let $K$ be an L-space knot in $S^{3}$. Consider $\CFKi(S^3_1(K),\K)$, where $\K$ is the core circle of the surgery solid torus in $1$-surgery on $K$. We are now ready to state the following computational result.

\newpage
\begin{theorem}\label{thm:Lspace}
 Assume $K$ is an L-space knot whose knot complex has $2m+1$ generators and $n(K)=n \geqslant 1$. If $m$ is odd, then $\CFKi(S^3_1(K),\K)$ is filtered chain homotopy equivalent to $C_n \oplus A$ as a filtered chain complex,  where $A$ is an acyclic complex, and the complex $C_n$ is generated by three generators whose $(i,j)$-filtrations are $(n-1,n),(0,0)$ and $(n,n-1)$ respectively, both generators in filtration $(n-1,n)$ and $(n,n-1)$ having an arrow going to filtration $(0,0).$

If $m$ is even, $\CFKi(S^3_1(K),\K)$ is homotopy equivalent to $C_0 \oplus A$ as a filtered chain complex, where $A$ is an acyclic complex, and the complex $C_0$ consists of one generator with $(i,j)$-filtration $(0,0)$.
\end{theorem}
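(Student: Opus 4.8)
The plan is to feed $\CFKi(S^3,K)$ into the Hedden--Levine filtered mapping cone formula \cite{HeddenLevine} and then use the rigidity of L-space knot complexes to collapse the resulting infinite complex down to the stated three-generator (resp.\ one-generator) model. The surgery formula presents $\CFKi(S^3_1(K),\K)$, as an absolutely graded $(i,j)$-filtered complex over $\F[U,U^{-1}]$, as (a suitable flavor of) the mapping cone
\[
	\X \;=\; \Cone\Bigl(\bigoplus_{s\in\Z} A_s \xrightarrow{\ \{v_s+h_s\}\ } \bigoplus_{s\in\Z} B_s\Bigr),
\]
where $A_s$ and $B_s$ are copies of the subquotient complexes $C\{\max(i,j-s)=0\}$ and $C\{i=0\}$ of $C=\CFKi(S^3,K)$ carrying grading and filtration shifts, $v_s\co A_s\to B_s$ and $h_s\co A_s\to B_{s+1}$ are the vertical and horizontal projection maps, and --- the point of the \emph{filtered} refinement --- the Alexander ($j$-)filtration of $\K$ on each summand is its internal filtration shifted by an explicit function of $s$ and $g(K)$, the $i$-filtration being the algebraic one. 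I would reproduce these shift conventions carefully, since the constants they contribute are exactly what become $n-1$, $n$, $0$ in the statement.

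Next I would insert the L-space knot data. Because $C$ is a staircase, each $A_s$ and each $B_s$ is homotopy equivalent over $\F[U,U^{-1}]$ to a single tower; tracking the $\K$-filtration, these towers sit in bifiltration levels determined directly by $n_1<\dots<n_m$ and the step lengths $\ell_s$, the $B_s$ clustering near $(0,0)$ and the $A_s$ displaced according to $s$. Moreover $v_s$ and $h_s$ become, up to filtered homotopy, multiplication by $U^{V_s}$ and $U^{H_s}$, where $V_s=H_{-s}$ is the non-increasing sequence of local $h$-invariants of $K$, with $V_s=0$ for $s\ge g(K)$, $H_s=0$ for $s\le -g(K)$, and $V_s-V_{s+1}\in\{0,1\}$. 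In particular $v_s$ is a filtered homotopy equivalence once $s\ge g(K)$ and $h_s$ is one once $s\le -g(K)$, so $A_s$ cancels against $B_s$ (resp.\ against $B_{s+1}$) for all but finitely many $s$, leaving a finite zig-zag
\[
	\cdots\; B_s \xleftarrow{\ U^{V_s}\ } A_s \xrightarrow{\ U^{H_s}\ } B_{s+1} \xleftarrow{\ U^{V_{s+1}}\ } A_{s+1} \xrightarrow{\ U^{H_{s+1}}\ } B_{s+2}\; \cdots
\]
of towers joined by $U$-power maps with $-g(K)<s<g(K)$. The parity of $m$ --- equivalently, whether the staircase generator $x_0$ sits at an outer or an inner corner of $C$ --- governs where this finite zig-zag pinches off, hence whether its homotopy reduction retains three generators or one.

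The remaining and most delicate task is to carry out this collapse as a \emph{filtered} homotopy equivalence, keeping both the $i$- and the $j$-coordinate of every generator under control. Forgetting the $j$-filtration the outcome is forced --- $S^3_1(K)$ is an integer homology sphere, so $\CFKi(S^3_1(K),\K)$ is $\F[U,U^{-1}]$ up to an acyclic summand --- so the whole content is the bifiltration of the surviving generators and the arrows among them. I would organize it as: (i) determine exactly which $A_s,B_s$ survive the cancellation above, together with the $U$-powers of the induced connecting maps, purely in terms of the $V_s$ and $H_s$ (hence of the $n_i$); (ii) compute the $(i,j)$-bifiltration of each survivor from the Hedden--Levine shift formula and the internal position of the corresponding staircase generator, which is where the identity $n(K)=n_m-n_{m-1}+\dots=n$ enters and produces the values $n-1$ and $n$; and (iii) perform a filtered change of basis splitting off the acyclic part and recognizing the remainder as $C_n$, with its two arrows into $(0,0)$. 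The $m$ even case is the same argument with the finite zig-zag one step longer, so that every generator cancels except a single tower left in bifiltration $(0,0)$, giving $C_0$. I expect step (ii), the simultaneous bookkeeping of the two filtrations through the chain of cancellations, to be the main obstacle: the $\F[U,U^{-1}]$-algebra is forced, but pinning down the filtration levels, the precise differentials of $C_n$, and the dichotomy controlled by the parity of $m$ is where the real work lies.
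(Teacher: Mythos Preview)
Your plan has the right entry and exit --- Hedden--Levine filtered mapping cone in, filtered change of basis out --- but the middle step does not work as stated. Collapsing each $A_s^\infty$ to a single tower to obtain the zig-zag $B_s\xleftarrow{U^{V_s}}A_s\xrightarrow{U^{H_s}}B_{s+1}$ is only an $\F[U,U^{-1}]$-equivalence, i.e.\ one respecting the $\II$-filtration alone. On $A_s^\infty$ the second filtration is $\JJ(A_s^\infty(i,j))=\max(i+s-1,j)$, which is genuinely nonconstant across the staircase generators, and the cancellations that produce your tower do not preserve $\JJ$. The zig-zag you write down is therefore the unfiltered Ozsv\'ath--Szab\'o picture; once you pass to it you have discarded exactly the Alexander-filtration data of $\K$ that determines $C_n$, and there is no mechanism to ``track'' it afterward. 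This is a structural gap, not missing bookkeeping.

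The paper's reduction is organized orthogonally: it slices $\X^\infty$ by $\JJ$ rather than by the mapping-cone index $s$. For each Alexander grading $j$ one has
\[
\CFKh(S^3_1(K),\K,j)\ \cong\ \Cone\Bigl(A_j\{i'\le 0,\,j'=j\}\oplus A_{j+1}\{i'=0,\,j'\le j\}\xrightarrow{\ (h_j,\,v_{j+1})\ }B_{j+1}\{i'=0\}\Bigr),
\]
and for a staircase this homology is computed by a four-case analysis depending on where $j$ lies among the exponents $n_s$. The outcome (Proposition~5.1 of the paper) is one ``lower corner'' generator $g_j=(x^1_m)'_{j+1}$ for each $j\in\{-g+1,\dots,g-1\}$, together with at most two ``upper corner'' generators $\alpha_j,\beta_j$ (explicit combinations drawn from $A_j,A_{j+1},B_{j+1}$) satisfying $\partial^\infty\alpha_j=U^{a_1}g_j+U^{a_2}g_{j+1}$ and $\partial^\infty\beta_j=U^{b_1}g_j+U^{b_2}g_{j-1}$. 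So the reduced complex is again a zig-zag, but indexed by Alexander grading, with edges that mix several $A$'s and $B$'s. The step that carries the real content is then a comparability lemma (Proposition~5.1(iii),(iv)): whenever two upper-corner generators share a $g_j$ in their boundary, after equalizing Maslov gradings the one with Alexander grading nearer $0$ dominates the other in the $(\II,\JJ)$ partial order. That inequality, verified case by case, is what licenses the telescoping filtered change of basis down to $\{\beta_1,g_0,\alpha_{-1}\}$ for $m$ odd (respectively to $g_0$ alone for $m$ even), and it is the substantive ingredient your outline does not supply.
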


\begin{remark}
 In fact, the complex $C_n$ we have described in Theorem \ref{thm:Lspace} and Figure \ref{fig:CFKinftycore} is the connected knot complex of $\CFKi(S^3_1(K),\K)$, for L-space knot $K$ whose knot complex has $2m+1$ generators where $m$ is odd and $n(K)=n$. Since we do not need the connected knot complex to explain the computational result, we will only formally define it in a later section. For the precise definition and more details see Section $6$.
 \end{remark}

\begin{figure}
\begin{tikzpicture}

	\begin{scope}[thin, black!20!white]
		\draw [<->] (-0.5, 0.5) -- (4, 0.5);
		\draw [<->] (0.5, -0.5) -- (0.5, 4);
	\end{scope}

	\filldraw (0.5, 0.5) circle (2pt) node[] (a){};
	\filldraw (3.5, 2.5) circle (2pt) node[] (b){};
	\filldraw (2.5, 3.5) circle (2pt) node[] (c){};	

	\draw [very thick, -] (b) -- (a);
	\draw [very thick, -] (c) -- (a);

	\node  at (3.7,0) {$n$};
	\node  at (2.3,0) {$n-1$};
	\node  at (-0.4,2.5) {$n-1$};
      \node  at (-0.1,3.5) {$n$}; 
      \node  at (3.8,2.9) {$(0)$};


	\draw [ thick, dashed] (b) -- (3.5,0.5);
	\draw [thick, dashed]  (b) -- (0.5,2.5);
	\draw [ thick, dashed] (c) -- (2.5,0.5);
	\draw [thick, dashed]  (c) -- (0.5,3.5);

\end{tikzpicture}
\caption{The filtered chain complex $C_n$, where the number in parenthesis marks the grading. For an L-space knot $K$ whose knot complex has $2m+1$ generators where $m$ is odd and $n(K)=n$, $\CFKi(S^3_1(K),\K)$ is filtered chain homotopy equivalent to $C_n \oplus A$, where $A$ is an acyclic complex. }
\label{fig:CFKinftycore}
\end{figure}
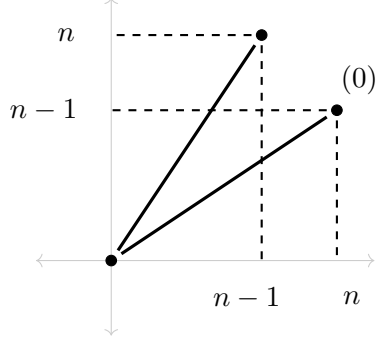

Before we move on to the method used in the proof of Theorem \ref{thm:Lspace}, we first explain why Theorem \ref{thm:Lspace} implies Proposition \ref{prop:kn} (\ref{item:kntau}), (\ref{item:knep}). For $n>1$, let $M_n$ denote $+1$-surgery on $T_{2,4n-1}$, the $(2,4n-1)$-torus knot, let $\tilde{T}_{2,4n-1}$ denote the core of the surgery in $M_n$, let $Y_n=M_n\conn -M_n$, and let $K_n\subset Y_n$ denote the connected sum of $\tilde{T}_{2,4n-1}$ in $M_n$ with the unknot in $-M_n$. 

\begin{proof}[Proof of Proposition \ref{prop:kn} (\ref{item:kntau}), (\ref{item:knep})]
The knot $T_{2,4n-1}$ has a symmetrized Alexander polynomial 
$$-1 + \sum_{i=1}^{2n-1} (-1)^{i-1}(t^{i} + t^{-i}).$$

In particular, $n(T_{2,4n-1})=n$ and there are $4n-1$ generators in the knot Floer complex. Applying Theorem \ref{thm:Lspace}, we see $\CFKi(M_n,\tilde{T}_{2,4n-1})$ is generated by 3 generators whose $(i,j)$-filtrations are $(-1,0),(0,-1)$ and $(-n,-n)$ respectively. Both generators in filtration $(-1,0)$ and $(0,-1)$ have an arrow going to filtration $(-n,-n).$ Consider the generator at filtration $(0,-1)$ and denote it by $\xi$. The homology class of $\xi$ in $\HF^+(M_n)$ is nontrivial in both $\Im (\rho_* \circ \iota_{-1*})$ and $\Im (\rho_* \circ v_{-1*})$ that is in the image of $U^N \HF^+(M_n)$ for all $N \gg
0$, and moreover $v'_{-1*}([\xi])$ is nontrivial. (See Definition \ref{def:taunu}.) 

From this observation, it is straightforward to verify that $\tau(M_n,\tilde{T}_{2,4n-1}) = \nu(M_n,\tilde{T}_{2,4n-1}) = \nu'(M_n,\tilde{T}_{2,4n-1}) = -1$, and so $\varep(M_n,\tilde{T}_{2,4n-1}) =0$. 
Connecting sum with the unknot in $-M_n$ doesn't change the value of $\tau$ or $\varep$, so it follows $\tau(Y_n,K_n)=-1$ and
$\varep(Y_n,K_n)=0$.
\end{proof}

\begin{remark}
By Theorem \ref{thm:Lspace}, if $K$ is an L-space knot in $S^3$, the only parameters that determine the connected knot complex of $\CFKi(S^3_1(K),\K)$ (See Definition \ref{def:conn}) are the value of $n(K)$ and the number of generators in $\CFKi(S^3,K)$. Hence our choice of knots is not particularly special. In fact, suppose we have a family of knots in $S^{3}$ such that for each knot, after a filtered change of basis, the knot Floer complex consists of acyclic summands and a positive staircase with $2m+1$ generators where $m$ is odd. Then as long as $n(K)$ of the staircase (the total length of the horizontal arrows in the top half of the staircase) is unbounded for this family of knots, they could be used to construct the $\mathbb{Z}^{\infty}$ subgroup.
\end{remark}

Next, we explain how to refine the construction such that the infinite generating pairs are inside $\operatorname{ker} \psi$, where $\psi$ is the natural map from $\CZhat $ to $ \widehat\cC_{\Z,\mathrm{top}} $. The following theorem is suggested to the author by JungHwan Park. The construction of the contractible $4$ manifold where the core of surgery bounds a slice disk in the proof is described to the author by Chuck Livingston.

First, we need a result from Hedden, Kim and Livingston's work \cite{MR3466802}. Let $D$ denote the positively-clasped untwisted Whitehead double of the right-handed trefoil, $T_{2,3},$ and let $D_k$ denote $kD$. 

\begin{proposition}[Proposition 6.1 in \cite{MR3466802}]
The chain complex $\CFKi(S^{3},D_k)$ is filtered chain homotopy equivalent to the chain complex $\CFKi(S^{3},T_{2,2k+1})\oplus A,$ where $A$ is an acyclic complex.
\end{proposition}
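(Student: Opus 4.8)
The plan is to compute $\CFKi(S^3, D_k)$ directly from Hedden's formula for the knot Floer homology of Whitehead doubles \cite{HeddenWhitehead}, which expresses $\widehat{\HFK}(S^3, D_+(K,t))$ in terms of the filtered chain homotopy type of $\CFKi(S^3,K)$ (specifically in terms of $\tau(K)$ and the ranks of the relevant subquotient complexes). For $K = T_{2,3}$ one has $\tau(T_{2,3}) = 1 > 0$, and Hedden's theorem gives $\widehat{\HFK}(S^3, D) $ of total rank $5$, supported in Alexander gradings $-1, 0, 1$, with ranks $1, 3, 1$; this is exactly the same graded group as $\widehat{\HFK}(S^3, T_{2,3})$. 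First I would pin down the full filtered complex $\CFKi(S^3, D)$ (not just $\widehat{\HFK}$): the point is that $D$ has a genus-$1$ Seifert surface, so $\CFKi(S^3,D)$ is supported in a narrow band, and the only filtered chain homotopy types with the given $\widehat{\HFK}$ ranks and the correct symmetry are the staircase $\CFKi(S^3,T_{2,3})$, possibly direct-summed with a box (an acyclic square) — the box being forced in or out by the $\tau$ and $\varepsilon$ invariants of $D$, both of which are known ($\tau(D) = \varepsilon(D) = 1$ since $D$ is the positively-clasped double of a knot with $\tau > 0$, by Hedden's computation). This handles the case $k = 1$.

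For general $k$, I would argue by induction on $k$, using the additivity of $\CFKi$ under connected sum: $\CFKi(S^3, D_k) \simeq \CFKi(S^3, D)^{\otimes k}$ up to filtered chain homotopy equivalence. The inductive step is then an algebraic tensor-product computation. Assuming $\CFKi(S^3, D_{k-1}) \simeq \CFKi(S^3, T_{2,2k-1}) \oplus A'$ with $A'$ acyclic, tensoring with $\CFKi(S^3, D) \simeq \CFKi(S^3, T_{2,3}) \oplus B$ (with $B$ a box) gives a sum of four terms; the terms involving an acyclic factor remain acyclic (tensoring an acyclic complex of free modules with anything over $\F[U,U^{-1}]$ stays acyclic), so it suffices to show
\[
\CFKi(S^3, T_{2,2k-1}) \otimes \CFKi(S^3, T_{2,3}) \simeq \CFKi(S^3, T_{2,2k+1}) \oplus (\text{acyclic}).
\]
This is a purely combinatorial staircase computation: the tensor product of the length-$(2k-1)$ staircase with the length-$3$ staircase is, after a filtered change of basis and cancellation of acyclic summands, the length-$(2k+1)$ staircase. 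This kind of staircase arithmetic is standard (it reflects $\tau(T_{2,2k-1} \# T_{2,3}) = \tau(T_{2,2k+1}) = k$ and $\varepsilon = 1$ throughout, and the fact that $T_{2,n}$ staircases are "stable" in the sense that they are the $\varepsilon = 1$ representatives of their concordance class); one performs it by locating the obvious subcomplex generated by a single zig-zag of length $2k+1$ and showing the complement is a direct sum of boxes.

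The main obstacle is the base case: establishing that $\CFKi(S^3,D)$ really is filtered chain homotopy equivalent to $\CFKi(S^3,T_{2,3})$ plus an acyclic piece, rather than merely having the same $\widehat{\HFK}$. Hedden's formula computes $\widehat{\HFK}$ and the vertical/horizontal homologies, but upgrading this to the full filtered homotopy type requires knowing there are no "exotic" differentials; the cleanest route is to invoke that a genus-one knot with $\widehat{\HFK}$ of ranks $(1,3,1)$ and with $\tau = \varepsilon = 1$ has $\CFKi$ determined up to adding boxes (a classification of thin-width, or more precisely genus-one, complexes), and then use the known values $\tau(D) = 1$, $\varepsilon(D) = 1$ to fix the box count. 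Once the base case is secured, the induction is routine. An alternative to the induction would be to compute $\CFKi(S^3, D)^{\otimes k}$ in one stroke using that $\CFKi(S^3,D) = \CFKi(S^3,T_{2,3}) \oplus B$ and expanding the $k$-fold tensor power by the binomial-type expansion, but this amounts to the same staircase arithmetic.
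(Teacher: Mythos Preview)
The paper does not prove this proposition at all: it is quoted verbatim as Proposition~6.1 of \cite{MR3466802} (Hedden--Kim--Livingston) and used as a black box in the proof of Theorem~\ref{thm:Jung}. There is therefore nothing in the present paper to compare your argument against.

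That said, a brief remark on your outline. The overall strategy---establish the base case $\CFKi(D)\simeq\CFKi(T_{2,3})\oplus(\text{acyclic})$ from Hedden's Whitehead-double formula together with the genus-one constraint, and then induct using the K\"unneth formula and the staircase identity $\CFKi(T_{2,2k-1})\otimes\CFKi(T_{2,3})\simeq\CFKi(T_{2,2k+1})\oplus(\text{acyclic})$---is indeed the shape of the argument in \cite{MR3466802}. However, your base case contains a slip: you assert that $\HFKh(D)$ has ranks $(1,3,1)$ and that ``this is exactly the same graded group as $\HFKh(T_{2,3})$,'' but $\HFKh(T_{2,3})$ has ranks $(1,1,1)$. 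The whole point of the acyclic summand $A$ is that it contributes nontrivially to $\HFKh$ (boxes have four generators in the associated graded) while being invisible to the total homology; so $\HFKh(D)$ is strictly larger than $\HFKh(T_{2,3})$, and the classification argument you sketch must account for those extra generators as boxes rather than conclude equality of $\HFKh$. Once that is corrected, the rest of the outline is sound.
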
 

Since $D$ has Alexander polynomial one, by Freedman and Quinn's work \cite{FreQuin}, $D$ is topologically slice and so is $D_k$. 

\begin{proof}[Proof of Theorem \ref{thm:Jung}.]
 For $n>1$, set $M'_n = S^3_{+1}(D_{2n-1})$, set $Y'_n=M'_n\conn -M'_n$, let $\tilde{D}_{2n-1}$ denote the core of the surgery in $M'_n$ , and let $J_n\subset Y'_n$ be the connected sum of $\tilde{D}_{2n-1}$ with the unknot in $-M'_n$. Since $\CFKi(S^{3},D_{2n-1})$ is filtered chain homotopy equivalent to the chain complex $\CFKi(S^{3},T_{2,4n-1})\oplus A,$  where $A$ is an acyclic complex, chosing $(Y'_n,J_n)$ provides us the same family of knot complexes $C_n$ as  $(Y_n,K_n)$ according to Theorem \ref{thm:Lspace}. 
 
 It remains to prove $(Y'_n,J_n)$ is topologically homology concordant to $(S^{3},O)$, where $O$ is the unknot in $S^{3}$. In fact, $M'_n = S^3_{+1}(D_{2n-1})$ bounds a contractible $4$ manifold where $\tilde{D}_{2n-1}$ bounds a locally flat disk. The construction is as follows. Attach a $2$-handle to $B^4$ along $D_{2n-1}$ with framing $1$ to form $W_n$. There is a $2$-sphere inside $W_n$ built from the core of the $2$ handle and the slice disk of $D_{2n-1}$. Since the framing of the attaching $2$-handle is $1$, the normal bundle of this $2$-sphere has Euler number $1$ and its boundary is $S^3$. Cut out the neighbourhood of this $2$-sphere and replace it with $B^4$, yielding a new $4$ manifold $Z_n$. The boundary of $Z_n$ is $S^3_{+1}(D_{2n-1})$ and $Z_n$ is contractible (To see this, first note that $Z_n$ is simply connected, and by Mayer-Vietoris sequence one can show it has the same homology as a point. Therefore $Z_n$ is contractible according to Whitehead Theorem). For $+1$ surgery, the core of surgery is isotopic to the zero framed longtitude in the complement of the knot, which bounds a locally flat disk in $B^4$ disjoint from the slicing disk for $D_{2n-1}$. Thus, $\tilde{D}_{2n-1}$ bounds a locally flat disk in $Z_n$. So $J_n$ bounds a topologically slice disk inside $Z_n \conn -Z_n$ and $(Y'_n,J_n)$ is topologically homology concordant to $(S^{3},O)$.

\end{proof}

The next two sections are dedicated to the proof of Theorem \ref{thm:Lspace}.

\section{The Filtered Mapping cone formula}
\label{sec:T35}

In this section, we briefly describe the filtered mapping cone formula and include an example of its application. We assume the reader is familiar with the surgery mapping cone formula by Ozsv\'ath and Szab\'o \cite{OS04}.

The filtered mapping cone formula from \cite{HeddenLevine}, is a refinement of the surgery mapping cone formula by Ozsv\'ath and Szab\'o. Let $C=\CFKi(Y,K)$ be the reduced filtered knot Floer homology for a knot $K$ in integer homology sphere $Y$. The filtered mapping cone formula puts extra filtrations on the mapping cone formula for $\HFp(Y_1(K))$, which in turns gives the knot Floer complex $\CFKi(Y_1(K),\K)$, where $\K$ is the core of the surgery solid torus of the $+1$-surgery on $K$.

Let $C=\bigoplus_{i,j\in \Z} C(i,j)$.
For each integer $s$, let $A_s^\infty$ and $B_s^\infty$ each denote a copy of the chain complex $C$, and write $A_s^\infty = \bigoplus_{i,j \in \Z} A_s^\infty(i,j)$ and $B_s^\infty = \bigoplus_{i,j \in \Z} B_s^\infty(i,j)$. We define a pair of $\Z$-filtrations $\II$ and $\JJ$ on each of these complexes as follows:
\begin{align*}
\II(A_s^\infty(i,j)) &= \max(i,j-s)  &  \JJ(A_s^\infty(i,j)) &= \max(i+s-1, j) \\
\II(B_s^\infty(i,j)) &= i & \JJ(B_s^\infty(i,j)) &= i+s-1.
\end{align*}
Let $A_s^-$ (resp. $B_s^-$) denote the subcomplex of $A_s^\infty$ (resp. $B_s^\infty$) with $\II<0$, let $A_s^+$ (resp. $B_s^+$) denote the quotient, and let $\hat A_s$ (resp. $\hat B_s$) be the subcomplex of $A_s^+$ (resp. $B_s^+$) with $\II=0$. This definition of $A_s^\circ$ and $B_s^\circ$ coincides with the definition by Ozsv\'ath and Szab\'o. 

Next we define the maps $v_s^\circ \co A_s^\circ \to B_s^\circ$ and $h_s^\circ \co A_s^\circ \to B_{s+1}^\circ$. Let $v_s^\infty$ be the identity map of $C$, and let $h_s^\infty$ be $\phi$ composed with multiplication by $U^s$, where $\phi$ is a $U$-equivariant, grading-preserving chain homotopy equivalence $\phi \co C \to C$, which restricts to a homotopy equivalence between the subcomplexes $C\{j \le s\}$ and $C \{i \le s\}$. Each of these maps is filtered with respect to both $\II$ and $\JJ$. In particular, $v_s^\infty$ (resp. $h_s^\infty$) takes the subcomplex $A_s^-$ into $B_s^-$ (resp. $B_{s+1}^-$), and induces a map $v_s^+ \co A_s^+ \to B_s^+$ (resp. $h_s^+ \co A_s^+ \to B_{s+1}^+$), which agrees with the definition in \cite{OS04}. Moreover, each of $v_s^\infty$ and $h_s^\infty$ is homogeneous of degree $-1$ with respect to the Maslov grading.
Let
\[
\Psi^\infty_{1-g,g} \co \bigoplus_{s=1-g}^g A_s^\infty \to \bigoplus_{s=2-g}^g B_s^\infty
\]
be the map given by the sum of the maps $v_s^\infty \co A_s^\infty \to B_s^\infty$ ($s=2-g, \dots, g$) and $h_s^\infty \co A_s^\infty \to B_{s+1}^\infty$ ($s = 1-g, \dots, g-1$), and let $\X^\infty$ denote the mapping cone of $\Psi^\infty_{1-g,g}$. Filtration functions $\II$ and $\JJ$ give $\X^\infty$ the structure of a doubly filtered chain complex with an action of $\F[U,U^{-1}]$. 

The following is implied by the main result from \cite{HeddenLevine}:

\begin{theorem} [Theorem 1.1 in \cite{HeddenLevine}]\label{thm:surgery}
The chain complex $\X^\infty$ is filtered quasi-isomorphic to $\CFKi(Y_1(K), \tilde K)$, where the filtrations $\II$ and $\JJ$ on $\X^\infty$ correspond to $i$ and $j$ on $\CFKi(Y_1(K), \tilde K)$.
\end{theorem}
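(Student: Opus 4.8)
The plan is to refine Ozsv\'ath--Szab\'o's derivation of the integer surgery formula \cite{OS04} by carrying an extra pair of basepoints that records the dual knot $\K$, and then to track the two resulting filtrations. Fix a reduced doubly-pointed Heegaard diagram for $(Y,K)$ computing $C=\CFKi(Y,K)$, and build from it the standard Heegaard diagram for $Y_1(K)$: replace the meridional $\beta$-curve by the $+1$-framed surgery curve and wind it many times through a fixed region. The core $\K$ of the surgery solid torus is recorded by placing its two basepoints in this winding region. First I would check this diagram is admissible, is subordinate to $\K$, and that deleting the $\K$-basepoints recovers the diagram Ozsv\'ath--Szab\'o use for $\HFp(Y_1(K))$.

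Next, I would identify the generators and differential with the mapping cone $\X^\infty$. Exactly as in \cite{OS04}, after the handleslide and truncation argument the intersection points organize, according to the winding coordinate, into the groups $A_s^\infty$ and $B_s^\infty$ ($s\in\Z$), each a copy of the generators of $C$. The differential splits: domains supported away from the winding region reproduce the internal differential $\partial$ of $C$ on each copy, while domains crossing the winding region once produce the off-diagonal maps $v_s^\infty\co A_s^\infty\to B_s^\infty$ and $h_s^\infty\co A_s^\infty\to B_{s+1}^\infty$. One checks $v_s^\infty$ is the identity of $C$ and identifies $h_s^\infty$ with $U^s$ times the flip homotopy equivalence $\phi\co C\to C$ (restricting to $C\{j\le s\}\simeq C\{i\le s\}$), using the $i\leftrightarrow j$ symmetry of $\CFKi$ just as in the closed surgery formula; because $Y_1(K)$ is a homology sphere, the $s$-indexed pieces reassemble into a single mapping cone rather than a sum over $\spinc$ structures.

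The heart of the matter is the filtration bookkeeping. The $U$-power ($i$-coordinate) is read off from the $w$-basepoint of $\K$ and the Alexander filtration ($j$-coordinate) from the $z$-basepoint of $\K$; both lie in the winding region, so each crossing of a winding arc shifts these counts. Following, for the generator labelled $(i,j)$ in the copy $A_s$ of $C$, the minimal number of times a representing domain must cross each basepoint as one winds, one obtains $\II(A_s^\infty(i,j))=\max(i,j-s)$, $\JJ(A_s^\infty(i,j))=\max(i+s-1,j)$ and $\II(B_s^\infty(i,j))=i$, $\JJ(B_s^\infty(i,j))=i+s-1$; the maxima encode the two sides along which a disk can pass the wound curve. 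One then verifies $v_s^\infty,h_s^\infty$ are filtered for both $\II$ and $\JJ$, and that $v_s^\infty$ is a filtered quasi-isomorphism for $s\ge g$ while $h_s^\infty$ is one for $s\le -g$, so the infinite cone is filtered quasi-isomorphic to the finite sub-cone $\bigoplus_{s=1-g}^{g}A_s^\infty\to\bigoplus_{s=2-g}^{g}B_s^\infty$, namely $\X^\infty$. Filtered naturality of the handleslides and stabilizations then yields the filtered quasi-isomorphism $\X^\infty\simeq\CFKi(Y_1(K),\K)$ with $\II\leftrightarrow i$, $\JJ\leftrightarrow j$.

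The main obstacle I anticipate is this last layer: verifying that every handleslide, stabilization and truncation isomorphism in the Ozsv\'ath--Szab\'o argument lifts to a map filtered with respect to the $\K$-basepoints, and pinning the filtration shifts to be exactly $i\mapsto\max(i,j-s)$ and $j\mapsto\max(i+s-1,j)$ rather than off by one. An alternative that bypasses much of the winding-region combinatorics is to compute $\CFKi(Y_1(K),\K)$ by bordered Floer homology: pair the type-$D$ module of the exterior of $K$ with the doubly-pointed bordered invariant of the solid torus containing its core, extract the resulting mapping cone, and let the filtration come from the basepoint in the solid-torus factor --- at the cost of matching the bordered algebra to this geometric picture.
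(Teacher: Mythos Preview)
The paper does not prove this statement at all: Theorem~\ref{thm:surgery} is quoted verbatim as Theorem~1.1 of \cite{HeddenLevine} and used as a black box. Immediately after stating it, the paper moves on to explaining how to extract a reduced model from $\X^\infty$ via cancellation; there is no argument for the filtered quasi-isomorphism itself.

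Your proposal is therefore not comparable to anything in the paper, but it is a reasonable sketch of the Hedden--Levine proof strategy --- refine the Ozsv\'ath--Szab\'o large-surgery/truncation argument while tracking an extra pair of basepoints for $\K$ in the winding region, and verify that all the chain maps in the integer-surgery package are filtered for both $\II$ and $\JJ$. The bordered alternative you mention is also viable in principle. If you actually intend to prove the theorem rather than cite it, be aware that the delicate part is exactly what you flag: showing the triangle maps and handleslide equivalences are \emph{filtered} (not just chain) maps with the precise shifts $\max(i,j-s)$ and $\max(i+s-1,j)$, and that the truncation from the infinite cone to the finite one $1-g\le s\le g$ is a filtered quasi-isomorphism. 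But for the purposes of this paper, no proof is expected --- a citation suffices.
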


For a filtered basis of $\X^\infty$,  we have $\d^\infty = \d + \d'$, where $\d$ consists of the terms that preserve both $\II$ and $\JJ$ filtrations and $\d'$ consists of the terms that lower at least one of the filtrations.

In order to compute invariants such as $\tau$ and $\varep$ more effectively, one wants to obtain a \emph{reduced} complex for $\CFKi(Y_1(K), \tilde K)$, namely a complex where each term in the differential strictly lowers at least one of the filtrations. There is a ``cancellation'' mathod (See Proposition 11.57 in \cite{Bordered}) that allows us to perform a filtered change of basis to obtain a reduced complex of $\X^\infty$. We will describe its procedure as follows. In each summand $\X(0,s)$, one can pick a basis $\{y_i\}$ for $\Im(\d)$, and then select elements $x_i \in \X(0,s)$ such that $\d(x_i) = y_i$. Then $\partial^\infty(x_i) = y_i + {}$ terms in lower filtration levels. The subcomplex of $\X^\infty$ spanned (over $\F[U,U^{-1}]$) by all the $\{x_i, \d^\infty(x_i)\}$ is acyclic, and quotienting $\X^\infty$ by this subcomplex yields a reduced complex for $\CFKi(Y_1(K), \tilde K)$. The generators for this quotient complex (over $\F[U,U^{-1}]$) are in one-to-one correspondence with the generators (over $\F$) of $\HFKh(Y_1(K), \tilde K)$, and the differential is induced from the terms in $\d'$ which strictly lower one of the filtrations. From the above analysis, we see that it is practical to start from each individual complex $\X(0,s)$ to compute $\HFKh(Y_1(K), \tilde K)$. We identify the generators in a reduced complex for $\CFKi(Y_1(K), \tilde K)$ with the generators of $\HFKh(Y_1(K), \tilde K)$ and compute  the induced differentials.
Next, we compute an example utilizing this  method.

\subsection*{An example: $\CFKi{(S^{3}_{+1}(T_{3,5}),\tilde{T}_{3,5})}$}

As an example, we compute  $\CFKi{(S^{3}_{+1}(T_{3,5}),\tilde{T}_{3,5})}$ using the filtered mapping cone formula, where $T_{3,5}$ is the $(3,5)$-torus knot, and $\tilde{T}_{3,5}$ is the core circle of the surgery solid torus in $1$-surgery on $T_{3,5}$.  See \cite{HomLevineLidman} Section $6$ for a more complicated example.
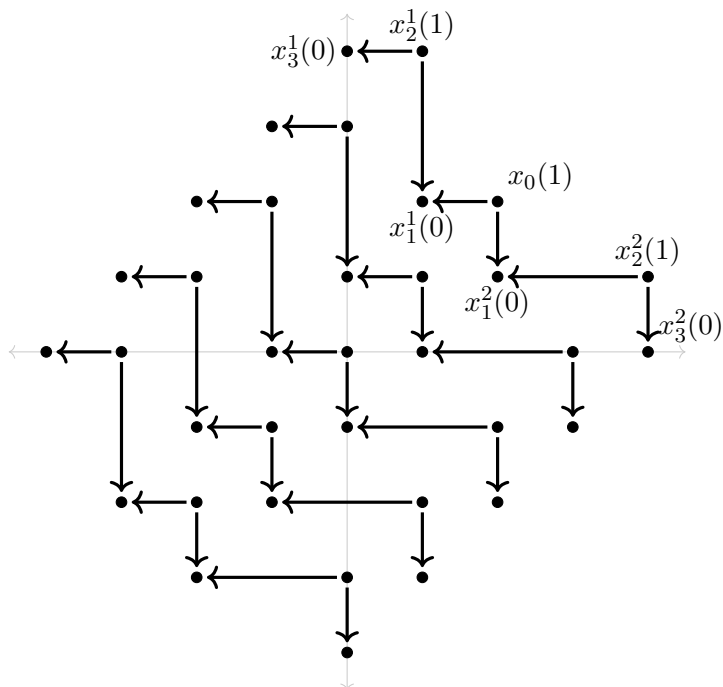
\begin{figure}[htb!]
\begin{tikzpicture}
	\begin{scope}[thin, black!20!white]
		\draw [<->] (-4, 0.5) -- (5, 0.5);
		\draw [<->] (0.5, -4) -- (0.5, 5);
	\end{scope}

\foreach \x in {-3,...,0}
{	
	
	\filldraw (\x-0.5, \x+3.5) circle (2pt) node[] (a){};
	\filldraw (\x+0.5, \x+3.5) circle (2pt) node[] (b){};
	\filldraw (\x+0.5, \x+1.5) circle (2pt) node[] (c){};
	\filldraw (\x+1.5, \x+1.5) circle (2pt) node[] (d){};
	\filldraw (\x+1.5, \x+0.5) circle (2pt) node[] (e){};
	\filldraw (\x+3.5, \x+0.5) circle (2pt) node[] (f){};
	\filldraw (\x+3.5, \x-0.5) circle (2pt) node[] (g){};

	\draw [very thick, ->] (b) -- (c);
	\draw [very thick, ->] (b) -- (a);
	\draw [very thick, ->] (d) -- (e);
	\draw [very thick, ->] (d) -- (c);
	\draw [very thick, ->] (f) -- (g);
	\draw [very thick, ->] (f) -- (e);

}	
\foreach \x in {1}
{	
	
	\filldraw (\x-0.5, \x+3.5) circle (2pt) node[] (a){};
	\filldraw (\x+0.5, \x+3.5) circle (2pt) node[] (b){};
	\filldraw (\x+0.5, \x+1.5) circle (2pt) node[] (c){};
	\filldraw (\x+1.5, \x+1.5) circle (2pt) node[] (d){};
	\filldraw (\x+1.5, \x+0.5) circle (2pt) node[] (e){};
	\filldraw (\x+3.5, \x+0.5) circle (2pt) node[] (f){};
	\filldraw (\x+3.5, \x-0.5) circle (2pt) node[] (g){};

	\draw [very thick, ->] (b) -- (c);
	\draw [very thick, ->] (b) -- (a);
	\draw [very thick, ->] (d) -- (e);
	\draw [very thick, ->] (d) -- (c);
	\draw [very thick, ->] (f) -- (g);
	\draw [very thick, ->] (f) -- (e);

	\node [left] at (a) {$x^{1}_{3}(0)$};
	\node [above] at (b) {$x^{1}_{2}(1)$};
	\node [below] at (c) {$x^{1}_{1}(0)$};
	\node [above right] at (d) {$x_{0}(1)$};
	\node [below] at (e) {$x^{2}_{1}(0)$};
	\node [above] at (f) {$x^{2}_{2}(1)$};
 \node [above right] at (g) {$x^{2}_{3}(0)$};
}

\end{tikzpicture}
\caption{The complex $\CFKi(T_{3,5})$. Numbers in parenthesis indicate Maslov gradings.}
\label{fig:CFKiT35}
\end{figure}

 Starting from $\CFKi{(T_{3,5})}$, here we use the same notations for generators as in \cite{HendricksManolescu}. Since $g(T_{3,5})=4$, the complex $\X^\infty$ is a mapping cone
\[
\bigoplus_{s=-3}^4 A_s^\infty \to \bigoplus_{s=-2}^4 B_s^\infty,
\]
We will work out $\CFKh{(S^{3}_{+1}(T_{3,5}),\tilde{T}_{3,5},1)}$ explicitly; the remaining computations are similar and left to the
reader. Reader can find the information of all generators in Table \ref{tab:CFKiKreduce}.
To start, recall $\CFKh{(S^{3}_{+1}(T_{3,5}),\tilde{T}_{3,5},1)}$ is given by the mapping cone
\begin{equation} 
 A_1\{i \leqslant 0, j=1\} \oplus A_2\{i = 0, j \leqslant 1\} \xrightarrow {(h_1, \ v_2)} B_2\{i=0\}.
\end{equation}
We depict $A_1$, $A_2$, and $B_2$ in Figures \ref{subfig:A1}, \ref{subfig:A2}, and \ref{subfig:B2}
respectively. The generators of different $A_i$ and $B_i$ are distinguished by the indice outside the parentheses. The generators of $A_i$ are labeled with an $i$-subscript outside the parentheses and the generators of $B_i$
are labeled with a prime and an $i$-subscript outside the parentheses. 

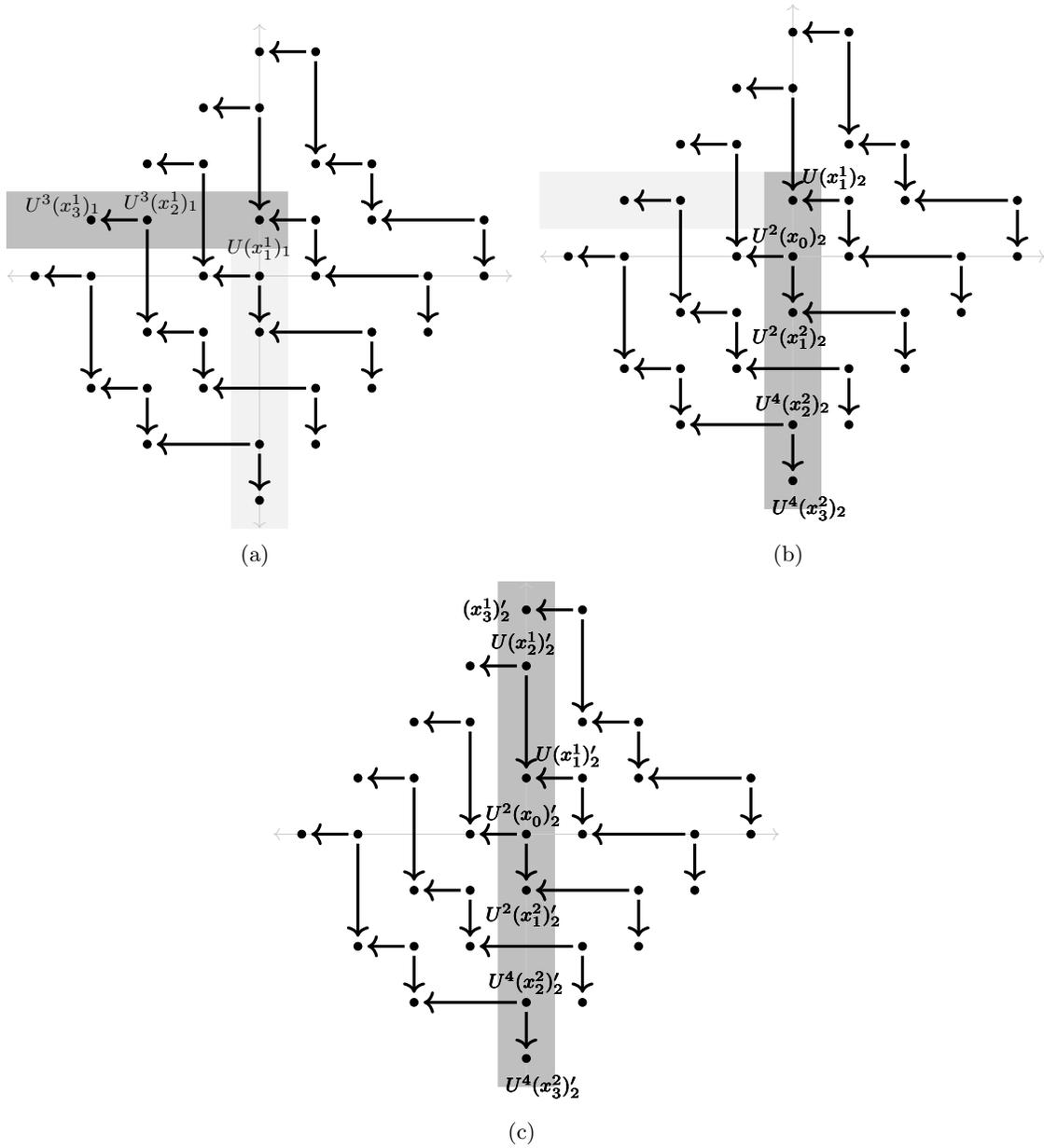
\begin{figure}[htb!]
\subfigure[]{
\begin{tikzpicture}[scale=0.8]

\filldraw[black!5!white] (0, -4) rectangle (1, 1);
\filldraw[black!25!white] (-4, 1) rectangle (1, 2);

	\begin{scope}[thin, black!20!white]
		\draw [<->] (-4, 0.5) -- (5, 0.5);
		\draw [<->] (0.5, -4) -- (0.5, 5);
	\end{scope}

\foreach \x in {-3,...,1}
{	
	
	\filldraw (\x-0.5, \x+3.5) circle (2pt) node[] (a){};
	\filldraw (\x+0.5, \x+3.5) circle (2pt) node[] (b){};
	\filldraw (\x+0.5, \x+1.5) circle (2pt) node[] (c){};
	\filldraw (\x+1.5, \x+1.5) circle (2pt) node[] (d){};
	\filldraw (\x+1.5, \x+0.5) circle (2pt) node[] (e){};
	\filldraw (\x+3.5, \x+0.5) circle (2pt) node[] (f){};
	\filldraw (\x+3.5, \x-0.5) circle (2pt) node[] (g){};

	\draw [very thick, ->] (b) -- (c);
	\draw [very thick, ->] (b) -- (a);
	\draw [very thick, ->] (d) -- (e);
	\draw [very thick, ->] (d) -- (c);
	\draw [very thick, ->] (f) -- (g);
	\draw [very thick, ->] (f) -- (e);

}	
\node[] at (-3,1.75) {\scriptsize$U^3 (x^{1}_{3})_{1}$};
\node[] at (-1.25,1.8) {\scriptsize$U^3 (x^{1}_{2})_{1}$};
\node[] at (0.5,1) {\scriptsize$U (x^{1}_{1})_{1}$};

\end{tikzpicture}
\label{subfig:A1}
}
\subfigure[]{
\begin{tikzpicture}[scale=0.8]

\filldraw[black!5!white] (-4, 1) rectangle (0, 2);
\filldraw[black!25!white] (0, -4) rectangle (1, 2);

	\begin{scope}[thin, black!20!white]
		\draw [<->] (-4, 0.5) -- (5, 0.5);
		\draw [<->] (0.5, -4) -- (0.5, 5);
	\end{scope}

\foreach \x in {-3,...,1}
{	
	
	\filldraw (\x-0.5, \x+3.5) circle (2pt) node[] (a){};
	\filldraw (\x+0.5, \x+3.5) circle (2pt) node[] (b){};
	\filldraw (\x+0.5, \x+1.5) circle (2pt) node[] (c){};
	\filldraw (\x+1.5, \x+1.5) circle (2pt) node[] (d){};
	\filldraw (\x+1.5, \x+0.5) circle (2pt) node[] (e){};
	\filldraw (\x+3.5, \x+0.5) circle (2pt) node[] (f){};
	\filldraw (\x+3.5, \x-0.5) circle (2pt) node[] (g){};

	\draw [very thick, ->] (b) -- (c);
	\draw [very thick, ->] (b) -- (a);
	\draw [very thick, ->] (d) -- (e);
	\draw [very thick, ->] (d) -- (c);
	\draw [very thick, ->] (f) -- (g);
	\draw [very thick, ->] (f) -- (e);

\node[] at (1.25,1.9) {\scriptsize$U (x^{1}_{1})_{2}$};
\node[] at (0.45,0.85) {\scriptsize$U^2 (x^{}_{0})_{2}$};
\node[] at (0.45,-0.95) {\scriptsize$U^2 (x^{2}_{1})_{2}$};
\node[] at (0.5,-2.15) {\scriptsize$U^4 (x^{2}_{2})_{2}$};
\node[] at (0.8,-4) {\scriptsize$U^4 (x^{2}_{3})_{2}$};

}	
\end{tikzpicture}
\label{subfig:A2}
}
\subfigure[]{
\begin{tikzpicture}[scale=0.8]

\filldraw[black!25!white] (0, -4) rectangle (1, 5);

	\begin{scope}[thin, black!20!white]
		\draw [<->] (-4, 0.5) -- (5, 0.5);
		\draw [<->] (0.5, -4) -- (0.5, 5);
	\end{scope}

\foreach \x in {-3,...,1}
{	
	
	\filldraw (\x-0.5, \x+3.5) circle (2pt) node[] (a){};
	\filldraw (\x+0.5, \x+3.5) circle (2pt) node[] (b){};
	\filldraw (\x+0.5, \x+1.5) circle (2pt) node[] (c){};
	\filldraw (\x+1.5, \x+1.5) circle (2pt) node[] (d){};
	\filldraw (\x+1.5, \x+0.5) circle (2pt) node[] (e){};
	\filldraw (\x+3.5, \x+0.5) circle (2pt) node[] (f){};
	\filldraw (\x+3.5, \x-0.5) circle (2pt) node[] (g){};

	\draw [very thick, ->] (b) -- (c);
	\draw [very thick, ->] (b) -- (a);
	\draw [very thick, ->] (d) -- (e);
	\draw [very thick, ->] (d) -- (c);
	\draw [very thick, ->] (f) -- (g);
	\draw [very thick, ->] (f) -- (e);

\node[] at (-0.2,4.5) {\scriptsize$(x^{1}_{3})'_{2}$};
\node[] at (0.45,3.9) {\scriptsize$U (x^{1}_{2})'_{2}$};
\node[] at (1.25,1.9) {\scriptsize$U (x^{1}_{1})'_{2}$};
\node[] at (0.45,0.85) {\scriptsize$U^2 (x^{}_{0})'_{2}$};
\node[] at (0.45,-0.95) {\scriptsize$U^2 (x^{2}_{1})'_{2}$};
\node[] at (0.5,-2.15) {\scriptsize$U^4 (x^{2}_{2})'_{2}$};
\node[] at (0.8,-4) {\scriptsize$U^4 (x^{2}_{3})'_{2}$};

}	
\end{tikzpicture}
\label{subfig:B2}
}
\caption{Top left, $A_1$. Top right, $A_2$. Bottom, $B_2$. The dark grey regions comprise
 $\CFKh{(S^{3}_{+1}(T_{3,5}),\tilde{T}_{3,5},1)}$.}
\label{fig:CFK_1core}
\end{figure}

From Figure \ref{fig:CFK_1core} one can read off that $\CFKh{(S^{3}_{+1}(T_{3,5}),\tilde{T}_{3,5},1)}$ consists $4$ direct summands under the basis we choose. We summarize them in Figure \ref{fig:4sum}.

\begin{figure}
\subfigure[]{
\begin{tikzpicture}

\filldraw (-0.5,1.5) circle (2pt) node[] (a){};
\filldraw (-0.5,0.5) circle (2pt) node[] (b){};
\filldraw (0.5,0.5) circle (2pt) node[] (c){};
\filldraw (0.5,-0.5) circle (2pt) node[] (d){};
\filldraw (1.5,1.5) circle (2pt) node[] (e){};
\filldraw (1.5,0.5) circle (2pt) node[] (f){};

    \draw [very thick, ->] (a) -- (b);
	\draw [very thick, ->] (a) -- (c);
	\draw [very thick, ->] (c) -- (d);
	\draw [very thick, ->] (e) -- (f);
	\draw [very thick, ->] (e) -- (c);
	\draw [very thick, ->] (b) -- (d);
	\draw [very thick, ->] (f) -- (d);
	
\node [above] at (a) {$U^{3}(x^1_2)_1$};
\node [below] at (-0.7,0.3) {$U^{3}(x^1_3)_1$};
\node [above] at (0.55,0.9) {\scriptsize$U^{4}(x^2_2)'_2$};
\node [below] at (d) {$U^{4}(x^2_3)'_2$};
\node [above right] at (1.3,1.5) {$U^{4}(x^2_2)_2$};
\node [below right] at (1.3,0.5) {$U^{4}(x^2_3)_2$};

\end{tikzpicture}
}
\subfigure[]{
\begin{tikzpicture}

\filldraw (-0.5,0.5) circle (2pt) node[] (b){};
\filldraw (0.5,0.5) circle (2pt) node[] (c){};
\filldraw (0.5,-0.5) circle (2pt) node[] (d){};
\filldraw (1.5,1.5) circle (2pt) node[] (e){};
\filldraw (1.5,0.5) circle (2pt) node[] (f){};

	\draw [very thick, ->] (c) -- (d);
	\draw [very thick, ->] (e) -- (f);
	\draw [very thick, ->] (e) -- (c);
	\draw [very thick, ->] (b) -- (d);
	\draw [very thick, ->] (f) -- (d);
	
\node [above left] at (-0.3,0.55) {$U(x^1_1)_1$};

\node [above] at (0.4,0.9) {$U^{2}(x_0)'_2$};
\node [below] at (d) {$U^{2}(x^2_1)'_2$};
\node [above right] at (1.3,1.5) {$U^{2}(x_0)_2$};
\node [below right] at (1.3,0.5) {$U^{2}(x^2_1)_2$};
\end{tikzpicture}
}
\subfigure[]{
\begin{tikzpicture}

	\begin{scope}[thin, black!0!white]
		\draw [<->] (-0.3, 0.5) -- (0.8, 0.5);

	\end{scope}
	
\filldraw (0.5,0.5) circle (2pt) node[] (c){};
\filldraw (0.5,-0.5) circle (2pt) node[] (d){};

\filldraw (1.5,0.5) circle (2pt) node[] (f){};

	\draw [very thick, ->] (c) -- (d);
	
	\draw [very thick, ->] (f) -- (d);

\node [above] at (c) {$U(x^1_2)'_2$};
\node [below] at (d) {$U(x^1_1)'_2$};

\node [below right] at (f) {$U(x^1_1)_2$};
\end{tikzpicture}
}
\subfigure[]{
\begin{tikzpicture}
\begin{scope}[thin, black!0!white]
		\draw [<->] (-0.3, 0.5) -- (0.8, 0.5);
        \draw [<->] (-0.3, -0.5) -- (0.8, 1.5);
	\end{scope}
\filldraw (0.5,1) circle (2pt) node[] (c){};

\node [right] at (c) {$(x^1_3)'_2$};

\end{tikzpicture}
}
\caption{The four direct summands in $\CFKh{(S^{3}_{+1}(T_{3,5}),\tilde{T}_{3,5},1)}$}
\label{fig:4sum}
\end{figure}

The first summand is acyclic whereas the other three summands each generates one dimensional homology.

It follows that  $\HFKh{(S^{3}_{+1}(T_{3,5}),\tilde{T}_{3,5},1)}$ is generated by
\[
\{     U (x^{1}_{1})_{1}+U^2 (x_0)'_2, \quad U (x^{1}_{1})_{2}+U (x^1_2)'_2,   \quad   (x^1_3)'_2  \}
\]

The next step is to pick a reduced basis for $\CFKi{(S^{3}_{+1}(T_{3,5}),\tilde{T}_{3,5})}$. We summarize all generators and their $\CFKi$ differentials in Table \ref{tab:CFKiKreduce}. The notations we choose here, even though cumbersome, will be helpful in the more general case. Note that the isolated generator $g_{j-1}:=(x^1_3)'_{j}$ in $B_{j}$ always survives as a generator of Alexander grading $j-1$ for $j\in \{-2,...,4\}$; we choose pairs of image and preimage under map $\d$ for  $j\in \{-2,...,4\}$:
\[
\d U (x^1_2)'_j=U (x^1_1)'_j, \quad   \d U^2 (x_0)'_j=U^2 (x^2_1)'_j, \quad  \d U^4 (x^2_2)'_j=U^4 (x^2_3)'_j
\]
and compute
\begin{align*}
\d^\infty  (x^1_2)'_j&= (x^1_3)'_j  +(x^1_1)'_j\\
   \d^\infty  (x_0)'_j&= (x^1_1)'_j + (x^2_1)'_j\\
     \d^\infty  (x^2_2)'_j&= (x^2_1)'_j + (x^2_3)'_j
\end{align*}
After quotienting by the $\F[U,U^{-1}]$-submodule $S$ generated by 
\[
 \{
  (x^1_2)'_j, \quad (x_0)'_j, \quad (x^2_2)'_j, \quad, (x^1_3)'_j  +(x^1_1)'_j, \quad (x^1_1)'_j + (x^2_1)'_j, \quad   (x^2_1)'_j + (x^2_3)'_j
 \}
\]
it readily follows that $(x^1_3)'_j  =(x^1_1)'_j = (x^2_1)'_j = (x^2_3)'_j = g_{j-1}$.

\begin{table}[htb!]
\begin{center}
\begin{tabular}{*{6}{cccccl}}
\hline
Alexander gr. & & Generator &  Maslov gr. & $\d^\infty$ & \\ \hline
$4$ & $\beta_4$   & $(x^1_3)_4$ & $12$ & $(x^1_3)'_4$ & $=g_3$ \\
$3$ & $g_3 $   &  $(x^1_3)'_4$ & $11$ & $0$ & \\
$2$ & $g_2$   & $(x^1_3)'_3$ & $5$ & $0$ & \\
$2$ & $\alpha_2$   &  $U (x^1_1)_3 + U (x^1_2)'_3$ & $4$ & $U(x^1_3)'_3 +U^4 (x^2_1)'_4$ & $=U g_2+U^4 g_3$\\
$1$ & $g_1$   &  $(x^1_3)'_2$ & $1$ & $0$ & \\
$1$ & $\beta_1$   &  $U (x^1_1)_1 + U^2 (x_{0})'_2 $ & $-2$ & $U (x^1_1)'_1 + U^2 (x^1_1)'_2$ & $=U g_0  +U^2 g_1$ \\
$1$ & $\alpha_1$   &  $U (x^1_1)_2 +U (x^1_2)'_2$ & $0$ & $U (x^1_3)'_2 + U^3 (x^2_1)'_3$ & $=U g_1 +U^3 g_2$ \\
$0$ & $g_0$   &  $(x^1_3)'_1$ & $-1$ & $0$ & \\
$-1$ & $g_{-1}$   &  $(x^1_3)'_0$ & $-1$ & $0$ \\
$-1$ & $\beta_{-1}$   &  $U^2 (x^2_1)_{-1} + U (x^1_2)'_0$ & $-2$ & $U (x^1_3)'_0 + U^2 (x^2_1)'_{-1}$  & $=U g_{-1} +U^2 g_{-2}$ \\
$-1$ & $\alpha_{-1}$   & $U^2 (x^2_1)_0 + U^2 (x_0)'_0$ & $-4$ & $U^2 (x^1_1)'_0 +U^2 (x^1_1)'_1$  & $=U^2 g_{-1} + U^3 g_{-3}$ \\
$-2$ & $g_{-2}$   &  $(x^1_3)'_{-1}$ & $1$ & $0$ & \\
$-2$ & $\beta_{-2}$   &  $U^3 (x^2_1)_{-2} + U (x^1_2)'_{-1}$ & $0$ & $U (x^1_3)'_{-1} + U^3 (x^2_1)'_{-2}$ & $=Ug_{-2} +U^3 g_{-3}$ \\
$-3$ & $g_{-3}$   &  $(x^1_{3})'_{-2}$ & $5$ & $0$ & \\
$-4$ & $\alpha_{-4}$   &  $U^4 (x^2_3)_{-3}$ & $4$ & $U (x^1_3)'_{-2}$ & $=U g_{-3}$
\\ \hline
\end{tabular}
\end{center}
\caption{Summary of the generators of $\X^\infty$ which survive in the reduced complex for  $\CFKi{(S^{3}_{+1}(T_{3,5}),\tilde{T}_{3,5})}$. 
}
\label{tab:CFKiKreduce}
\end{table}

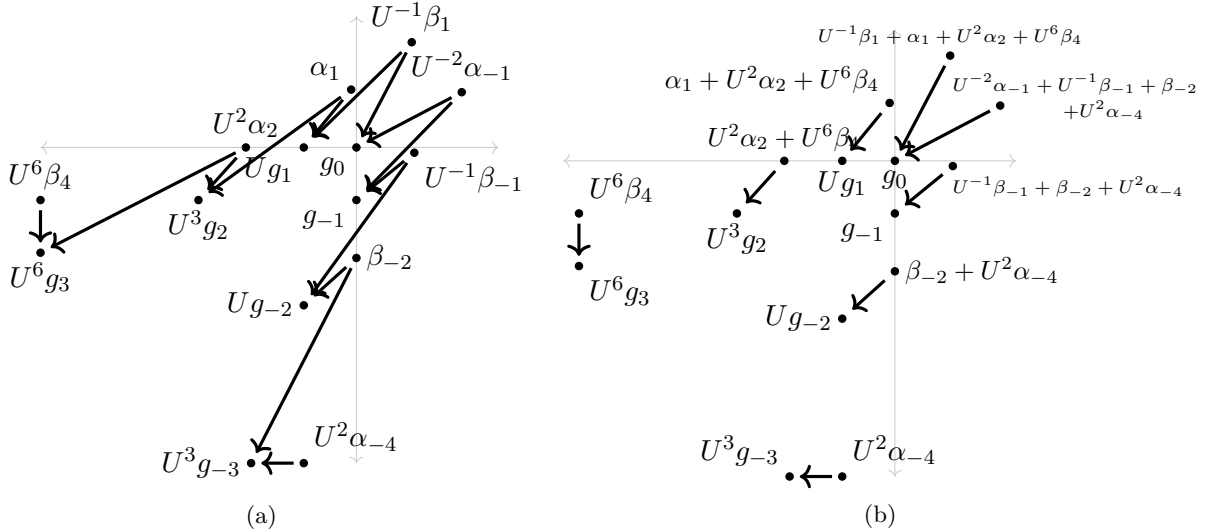
\begin{figure}
\subfigure[]{
\begin{tikzpicture}[scale=0.7]

	\begin{scope}[thin, black!20!white]
		\draw [<->] (-5.5, 0.5) -- (3.2, 0.5);
		\draw [<->] (0.5, -5.5) -- (0.5, 3);
	\end{scope}

	\filldraw (-5.5,-0.5) circle (2pt) node[] (a){};
	\filldraw (-5.5,-1.5)  circle (2pt) node[] (b){};
	\filldraw (-1.6,0.5)  circle (2pt) node[] (c){};
	\filldraw (-2.5,-0.5)  circle (2pt) node[] (d){};
	\filldraw (0.4,1.6)  circle (2pt) node[] (e){};
	\filldraw (-0.5,0.5)  circle (2pt) node[] (f){};
	\filldraw (1.55,2.5)  circle (2pt) node[] (g){};
	\filldraw (0.5,0.5)  circle (2pt) node[] (h){};	
	\filldraw (2.5,1.55)  circle (2pt) node[] (i){};	
	\filldraw (0.5,-0.5)  circle (2pt) node[] (j){};
	\filldraw (1.6,0.4)  circle (2pt) node[] (k){};
	\filldraw (-0.5,-2.5)  circle (2pt) node[] (l){};
	\filldraw (0.5,-1.6)  circle (2pt) node[] (m){};
	\filldraw (-1.5,-5.5)  circle (2pt) node[] (n){};
	\filldraw (-0.5,-5.5)  circle (2pt) node[] (o){};
					
	\draw [very thick, ->] (a) -- (b);
	\draw [very thick, ->] (c) -- (b);
	\draw [very thick, ->] (c) -- (d);
	\draw [very thick, ->] (e) -- (d);
	\draw [very thick, ->] (e) -- (f);
	\draw [very thick, ->] (g) -- (f);
	\draw [very thick, ->] (g) -- (h);
	\draw [very thick, ->] (i) -- (h);
	\draw [very thick, ->] (i) -- (j);
	\draw [very thick, ->] (k) -- (j);
	\draw [very thick, ->] (k) -- (l);
	\draw [very thick, ->] (m) -- (l);
	\draw [very thick, ->] (m) -- (n);
	\draw [very thick, ->] (o) -- (n);
	
	\node[above] at (a) {$U^6 \beta_4$};
	\node[below] at (b) {$U^6 g_3$};
	\node[above] at (c) {$U^2 \alpha_2$};
	\node[below] at (d) {$U^3 g_2$};
	\node[above left] at (e) {$\alpha_1$};
	\node[below left] at (f) {$U g_1$};
	\node[above] at (g) {$U^{-1} \beta_1$};
  \node[below left] at (h) {$g_0$};
	\node[above] at (i) {$U^{-2}\alpha_{-1}$};
	\node[below left] at (j) {$g_{-1}$};
	\node[below right] at (k) {$U^{-1}\beta_{-1}$};
	\node[left] at (l) {$U g_{-2}$};
	\node[right] at (m) {$\beta_{-2}$};
	\node[left] at (n) {$U^3 g_{-3}$};
	\node[above right] at (o) {$U^2 \alpha_{-4}$};

\end{tikzpicture}

\label{fig:Kreduce}
}
\subfigure[]{
\begin{tikzpicture}[scale=0.7]

	\begin{scope}[thin, black!20!white]
		\draw [<->] (-5.8, 0.5) -- (2.8, 0.5);
		\draw [<->] (0.5, -5.5) -- (0.5, 3);
	\end{scope}

	\filldraw (-5.5,-0.5) circle (2pt) node[] (a){};
	\filldraw (-5.5,-1.5)  circle (2pt) node[] (b){};
	\filldraw (-1.6,0.5)  circle (2pt) node[] (c){};
	\filldraw (-2.5,-0.5)  circle (2pt) node[] (d){};
	\filldraw (0.4,1.6)  circle (2pt) node[] (e){};
	\filldraw (-0.5,0.5)  circle (2pt) node[] (f){};
	\filldraw (1.55,2.5)  circle (2pt) node[] (g){};
	\filldraw (0.5,0.5)  circle (2pt) node[] (h){};	
	\filldraw (2.5,1.55)  circle (2pt) node[] (i){};	
	\filldraw (0.5,-0.5)  circle (2pt) node[] (j){};
	\filldraw (1.6,0.4)  circle (2pt) node[] (k){};
	\filldraw (-0.5,-2.5)  circle (2pt) node[] (l){};
	\filldraw (0.5,-1.6)  circle (2pt) node[] (m){};
	\filldraw (-1.5,-5.5)  circle (2pt) node[] (n){};
	\filldraw (-0.5,-5.5)  circle (2pt) node[] (o){};
					
	\draw [very thick, ->] (a) -- (b);
	
	\draw [very thick, ->] (c) -- (d);

	\draw [very thick, ->] (e) -- (f);

	\draw [very thick, ->] (g) -- (h);
	\draw [very thick, ->] (i) -- (h);

	\draw [very thick, ->] (k) -- (j);
	
	\draw [very thick, ->] (m) -- (l);

	\draw [very thick, ->] (o) -- (n);
	
	\node[above right] at (a) {$U^6 \beta_4$};
	\node[below right] at (b) {$U^6 g_3$};
	\node[above] at (c) {\small$U^2 \alpha_2 +U^6 \beta_4 $};
	\node[below] at (d) {$U^3 g_2$};
	\node[above left] at (e) {\small$\alpha_1 + U^2 \alpha_2 +U^6 \beta_4 $};
	\node[below] at (f) {$U g_1$};
	\node[above] at (g) {\tiny$U^{-1} \beta_1 + \alpha_1 + U^2 \alpha_2 +U^6 \beta_4$};
  \node[below] at (h) {$g_0$};

		\node[above right] at (1.4,1.55) {\tiny$U^{-2}\alpha_{-1}+U^{-1}\beta_{-1}+\beta_{-2}$};
		\node[above right] at (3.5,1.05) {\tiny$+U^2 \alpha_{-4}$};
	\node[below left] at (j) {$g_{-1}$};
	\node[below right] at (1.4,0.4) {\tiny$U^{-1}\beta_{-1}+\beta_{-2}+U^2 \alpha_{-4}$};
	\node[left] at (l) {$U g_{-2}$};
	\node[right] at (m) {\small$\beta_{-2}+U^2 \alpha_{-4}$};
	\node[above left] at (n) {$U^3 g_{-3}$};
	\node[above right] at (o) {$U^2 \alpha_{-4}$};

\end{tikzpicture}

\label{fig:Kchangebasis}
}
\caption{The reduced complex for $\CFKi{(S^{3}_{+1}(T_{3,5}),\tilde{T}_{3,5})}$, drawn in the $(i,j)$-plane. The right figure is after the change of basis.}
\end{figure}

We perform a change of basis that yields Figure \ref{fig:Kchangebasis}.  On the top half of the graph, we replace $U^2 \alpha_2$ by $U^2 \alpha_2 +U^6 \beta_4 $, replace $\alpha_1$ by $\alpha_1 + U^2 \alpha_2 +U^6 \beta_4 $ and $U^{-1}\beta_{-1}$ by $U^{-1}\beta_{-1} + \alpha_1 + U^2 \alpha_2 +U^6 \beta_4 $. We change the basis for the bottom half of the graph in a similar way, and keep the rest generators unchanged. Quotienting all the acyclic summands from Figure \ref{fig:Kchangebasis} leaves us with a complex with $3$ generators from which one can readily read out that $\tau(S^{3}_{+1}(T_{3,5}),\tilde{T}_{3,5})=-1$ and $\varep(S^{3}_{+1}(T_{3,5}),\tilde{T}_{3,5})=0$.

\section{Computation for general L-Space knots}

In this section we prove Theorem \ref{thm:Lspace}.
First, recall from Section 3 that if $K$ is an L-space knot, then the Alexander polynomial of $K$ is of the form
$$ \Delta_K(t) = (-1)^m + \sum_{i=1}^m (-1)^{m-i}(t^{n_i} + t^{-n_i})$$
for a sequence of positive integers $0 < n_1 < n_2 < \dots < n_m.$ Here, $n_m = g(K)$ is the genus of $K$. Let $n(K) \geqslant 0$ be the quantity
$$ n(K):= n_m - n_{m-1} + \dots + (-1)^{m-2} n_2 + (-1)^{m-1} n_1.$$ Furthermore, let $\ell_s = n_s - n_{s-1}$.

We compute a reduced complex for $\CFKi(S^3_1(K),\K)$ according to the method we described in Section 4 (See the paragraph after Theorem \ref{thm:surgery}). Recall that we identify the generators in the reduced complex of $\CFKi(S^3_1(K),\K)$ with the generators in $\HFKh(S^3_1(K),\K)$ and compute the induced differential where each term lowers at least one of the $\II$ and $\JJ$ filtrations. Throughout this section, when we say a generator in the reduced complex of $\CFKi(S^3_1(K),\K)$ has Alexander filtration level $j$, without specification it means that in $\HFKh(S^3_1(K),\K)$ the generator has Alexander grading $\JJ=j$.

Each generator in $\CFKi(S^3_1(K),\K)$  has a coordinate in $\Z\oplus \Z$ given by $(\II,\JJ)$. Define $\prec$ to be the natural partial order in $\Z\oplus \Z$, where $(a,b) \prec (c,d)$ if and only if $a\leqslant c $ and $b\leqslant d $.

Theorem \ref{thm:Lspace} follows immediately from the following observations. Indices are assigned according to the same rule as in Section \ref{sec:T35}: the generators of $A_j$ are labeled with an $j$-subscript outside the parentheses and the generators of $B_j$
are labeled with a prime and an $j$-subscript outside the parentheses. 

\begin{proposition} \label{pro:lspace}
Given an L-space knot $K$ with Alexander polynomial 
$$ \Delta_K(t) = (-1)^m + \sum_{i=1}^m (-1)^{m-i}(t^{n_i} + t^{-n_i}),$$
we can choose a basis such that in the reduced complex $\CFKi(S^3_1(K),\K)$ the following are satisfied.
\begin{enumerate}[label=(\roman{*}), ref=\roman{*}]
	\item \label{it:gj} $g_j := (x^1_m)'_{j+1}$ is a generator in Alexander filtration level $j$ for $-n_m+1\leqslant j\leqslant n_m-1$. We call all $g_j$ {\em lower corner generators}, and the rest generators {\em upper corner generators}. We abuse notation and also call their images under $\F [U,U^{-1}]$-actions lower or upper corner generators, respectively.
	\item \label{it:connect} There are at most two upper corner generators in filtration level $j$, which, if they exist, we denote by $\alpha_j$ and $\beta_j$, where
	\begin{align*}
	\d^\infty \alpha_j &= U^{a_1}g_j + U^{a_2}g_{j+1}, \qquad -n_m+1\leqslant j\leqslant n_m-1 \\
		\d^\infty \beta_j &= U^{b_1}g_j + U^{b_2}g_{j-1}, \qquad -n_m+1\leqslant j\leqslant n_m-1 \\
	&	\d^\infty \beta_{n_m}   \ \ = g_{n_m -1}, \\
	&  \d^\infty \alpha_{-n_m}  = Ug_{-n_m +1}.
	\end{align*}
	where $a_i,b_i,i\in\{1,2\}$ are some constants (also depend on $j$). Each $g_j$ is in the image of exactly two upper corner generators.
	\item \label{it:comparable} Given two upper corner generators $u_j, u_{j'}$ in $j,j'$ filtration level respectively with $j>j'\geqslant 0$, suppose for some constants $t$ and $s$, $U^t u_j$ and $U^s u_{j'}$ have the same Maslov grading and their coordinates are $(i_1,j_1),(i_2,j_2)$ respectively, then $(i_1,j_1)\prec (i_2,j_2)$. 
	\item \label{it:comparable2} Suppose $\alpha_j, \beta_j$ both exist in filtration level $j \geqslant 0$. If for some constants $t$ and $s$, $U^t \alpha_j$ and $U^s \beta_{j}$ have the same Maslov grading and their coordinates are $(i_1,j_1),(i_2,j_2)$ respectively, then $(i_1,j_1)\prec (i_2,j_2)$.
\end{enumerate}
\end{proposition}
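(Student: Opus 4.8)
The plan is to run the cancellation algorithm of Section~\ref{sec:T35} on the filtered mapping cone $\X^\infty$ for general L-space knots, tracking generators, gradings, and $(\II,\JJ)$-coordinates explicitly, exactly as in the $T_{3,5}$ example but organized so the bookkeeping survives the passage to arbitrary staircase parameters $0<n_1<\dots<n_m$. First I would recall that each $\hat A_s$ and $\hat B_s$ is a copy of the staircase complex $\CFKh(S^3,K)$ placed (and $U$-scaled) according to the filtration formulas for $\II,\JJ$ on $A_s^\infty,B_s^\infty$; the truncated mapping cone over $s=1-g,\dots,g$ then splits, after the cancellation of Proposition~11.57 in \cite{Bordered}, into summands indexed by Alexander level $j$. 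In each level the isolated generator $(x^1_m)'_{j+1}$ of the $B_{j+1}$-copy is never hit by $\d$ and never maps by $\d$; this gives the lower corner generators $g_j$ and establishes \eqref{it:gj}, with the range $-n_m+1\le j\le n_m-1$ coming directly from the genus bound $g=n_m$ and the truncation range of the mapping cone.

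Next I would identify the upper corner generators. After cancelling, in each $A_s\{i\le 0,j=s\}$ or $A_s\{i=0,j\le s\}$ piece the surviving staircase generators are the $x_0$- and zig-zag generators not matched off; the $v_s$ and $h_s$ maps (identity, resp.\ $U^s\phi$) send these into $B_s$ and $B_{s+1}$, and the induced differential $\d'$ on the reduced complex records precisely how a surviving $A$-generator maps to two $g$'s — one coming from the $B_s$ side (the "$v$" contribution) and one from the $B_{s+1}$ side (the "$h$" contribution, carrying a power of $U$ from $U^s$). Organizing by which "corner" of the staircase a generator sits in yields exactly two families $\alpha_j$ (mapping to $g_j$ and $g_{j+1}$) and $\beta_j$ (mapping to $g_j$ and $g_{j-1}$), with the boundary cases $\beta_{n_m}\mapsto g_{n_m-1}$ and $\alpha_{-n_m}\mapsto Ug_{-n_m+1}$ at the ends of the truncation, matching the pattern seen in Table~\ref{tab:CFKiKreduce}; that each $g_j$ is hit exactly twice is then a counting statement about incoming arrows, proving \eqref{it:connect}. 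The $U$-exponents $a_1,a_2,b_1,b_2$ are read off from the explicit $\II,\JJ$-values, which are affine functions of the $\ell_s=n_s-n_{s-1}$.

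For \eqref{it:comparable} and \eqref{it:comparable2}, the key is that the Maslov grading of (a $U$-multiple of) an upper corner generator, and its $(\II,\JJ)$-coordinates, are both explicit linear expressions in $j$ and the staircase data. I would write $M(U^t u_j)$ and the pair $(\II,\JJ)(U^t u_j)$ in closed form; imposing equality of Maslov gradings for $U^t u_j$ and $U^s u_{j'}$ pins down $t-s$ in terms of $j-j'$ and the $n_i$'s, and substituting back shows $\II$ and $\JJ$ each weakly increase as the filtration level decreases from $j$ to $j'\ge 0$ — this uses $n(K)\ge 1$ and the monotonicity $n_1<\dots<n_m$ in an essential way, and is where I expect the main obstacle to lie, since it requires the inequalities $\II(U^t u_j)\le \II(U^s u_{j'})$ and likewise for $\JJ$ to hold \emph{simultaneously}, not just one of them. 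The comparison \eqref{it:comparable2} between $\alpha_j$ and $\beta_j$ in a single level is the same computation restricted to one value of $j$, comparing the two corners. Once \eqref{it:gj}--\eqref{it:comparable2} are in hand, Theorem~\ref{thm:Lspace} follows: repeatedly change basis to cancel the upper corner generators against the $g_j$'s they hit (as in the passage from Figure~\ref{fig:Kreduce} to Figure~\ref{fig:Kchangebasis}), using \eqref{it:comparable}--\eqref{it:comparable2} to guarantee the change of basis is filtered; in the odd-$m$ case one is left with $C_n$ (the three surviving generators at $(n-1,n),(0,0),(n,n-1)$), and in the even-$m$ case with $C_0$.
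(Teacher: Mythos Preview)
Your proposal follows the same global strategy as the paper --- run the cancellation procedure on $\X^\infty$, identify the surviving generators level by level, then verify coordinate/grading inequalities --- but your outline for \eqref{it:connect} is too schematic and misses the organizing structural step. The paper does not argue abstractly that ``organizing by which corner of the staircase'' produces two families; instead it performs a case analysis on where the Alexander level $j$ sits relative to the staircase exponents $n_s$ (four cases: $n_{s-1}<j<n_s$ with $m-s$ positive even, $j=n_{s-1}$, $n_s<j<n_{s+1}$, and $j=n_s$, the latter two with $m-s$ positive even). Only through this split does one see whether a given level supports an $\alpha$, a $\beta$, both, or neither, and obtain the explicit formulas such as $\beta_j = U^{L_s+(n_s-j)}(x^1_s)_j + U^{L_s+n_s}(x^2_{s-1})'_{j+1}$ needed afterwards. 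Note too that the upper corner generators are not simply surviving $A$-generators mapping to two $B$'s as you describe: each is a sum of an $A$-term and a $B$-term, and the computation of $\d^\infty$ uses a preliminary step in which one quotients by an explicit acyclic submodule of the $B$-columns, identifying every odd-index $(x^t_{2i+1})'_j$ (or even-index, when $m$ is even) with $g_{j-1}$.

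For \eqref{it:comparable}--\eqref{it:comparable2}, your plan to write the Maslov grading and $(\II,\JJ)$-coordinates in closed form and then verify a general two-variable inequality is more ambitious than what the paper actually does. The paper observes that it suffices to compare \emph{adjacent} upper corner generators --- pairs sharing a common $g_j$ in their differentials --- since $\prec$ is transitive and the Maslov-matching condition is compatible with $U$-shifts; the case analysis from \eqref{it:connect} already enumerates the four possible adjacency types (Figure~\ref{fig:upperconnect}), and only one is written out explicitly. This reduction to adjacent pairs is what keeps the verification short; your closed-form approach would in principle work but is likely to become unwieldy without first introducing the same case split. Finally, the deduction of Theorem~\ref{thm:Lspace} from the proposition is packaged separately in the paper, not as part of the proof of Proposition~\ref{pro:lspace} itself.
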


Before proving this result, we show how it implies Theorem \ref{thm:Lspace}.
\begin{proof}[Proof of Theorem \ref{thm:Lspace}]
Starting from the top half of the complex, $\beta_{n_m} :=(x^1_m)_{n_m}$ is the only generator in Alexander level $n_m$. According to \eqref{it:connect},  $\d^{\infty}\beta_{n_m} = g_{n_m -1}$ and since $g_{n_m -1}$ is in the image of exactly two upper corner generators, we have $\d^{\infty} ( \text{another upper corner generator} )  = g_{n_m -1} + U^{c}g_{n_m -2},$ where $c$ is some constant. Similarly $g_{n_m -2}$ is in the image of exactly two upper corner generators, so we can find the next upper corner generator that has $g_{n_m -2}$ in the differential. Notice that the next upper corner generator we find in this way will have non increasing $j$ filtration level due to the relation in \eqref{it:connect}. Repeating the process until we reach filtration level $0$, this determines a sequence of all the upper corner generators with filtration level $j\geqslant 0$ where adjacent elements in the sequence share a same lower corner generator in the differential.

 We then perform a change of basis similar to the one in Figure \ref{fig:Kchangebasis}: for each $\beta_j$ where $j\geqslant 0$, replace it with the sum of itself, all the $\alpha_j'$ with $j' \geqslant j$ and all the $\beta_j'$ with $j' > j$, multiplying each term by $U$ or $U^{-1}$ if necessary such that they have the same Maslov grading; for each $\alpha_j$ where $j\geqslant 0$, replace it with the sum of itself and all the $\alpha_j'$ and $\beta_j'$ with $j' > j$, multiplying each term by $U$ or $U^{-1}$ if necessary such that they have the same Maslov grading. By \eqref{it:comparable} and \eqref{it:comparable2} this is a filtered change of basis. The symmetry of knot complex allows us to change the basis for the bottom half of the complex in a similar way. Quotienting out all acyclic summands reduces $\CFKi(S^3_1(K),\K)$ to a complex with $3$ generators. A direct computation shows that for $m$ odd case, the quotient complex consists of $\beta_1, g_0, \alpha_{-1}$ and is determined by the relations (after the quotient) :
\[
 \d^\infty \alpha_{-1} =U^{n(K)}g_0 , \quad \d^\infty \beta_1 =U^{n(K)-1}g_0 
\]
For the $m$ even case, the quotient complex consists of $\beta_0, g_0, \alpha_0$ with relations
\[
\d^\infty \beta_0 =U^{n(k)}g_{0} , \quad \d^\infty \alpha_0 =U^{n(k)}g_{0} 
\]
and a further change of basis yields a complex with one generator.
\end{proof}
The rest of the section is dedicated to prove the Proposition \ref{pro:lspace}.
In order to prove \eqref{it:gj}, simply notice $(x^1_m)'_j$ is always in the kernel of $\d$, and never in the image of $v_j,h_{j-1}$.

Before proceeding to the proof of \eqref{it:connect}, we choose a reduced basis that will simplify the computation. We demonstrate the case when $m$ is odd here, while the even case is analogous. Recall that $\ell_s=n_s-n_{s-1}$, and let  $L_s:=\ell_m+\ell_{m-2}+...+\ell_{s+2}$ where $m-s$ is a positive even integer.
Choose pairs of image and preimage of map $\d$ for $j\in \{ -n_m +2,.., n_m \}$:
\begin{align*}
 \d U^{L_{2i-1}} (x^1_{2i})'_j&=U^{L_{2i-1}} (x^1_{2i-1})'_j \quad 1\leqslant i\leqslant \frac{m-1}{2} \\
 \d U^{n(K)} (x_0)'_j& =U^{n(K)} (x^2_1)'_j \\
 \d U^{n(K)+\sum^i_1{\ell_{2k}}} (x^2_{2i})'_j& =U^{n(K)+\sum^i_1{\ell_{2k}}} (x^2_{2i+1})'_j \quad 1\leqslant i\leqslant \frac{m-1}{2}
\end{align*}
and compute
\begin{align*}
 \d^\infty (x^1_{2i})'_j&=  (x^1_{2i+1})'_j + (x^1_{2i-1})'_j \quad 1\leqslant i\leqslant \frac{m-1}{2} \\
 \d^\infty (x_0)'_j& = (x^1_1)'_j+  (x^2_1)'_j \\
\d^\infty  (x^2_{2i})'_j& = (x^2_{2i+1})'_j + (x^2_{2i-1})'_j \quad 1\leqslant i\leqslant \frac{m-1}{2}
\end{align*}
After quotienting by the $\F[U,U^{-1}]$-submodule $S$ generated by 
\[
 \{
  (x^1_{2i})'_j, \quad \d^\infty (x^1_{2i})'_j, \quad  (x_0)'_j, \quad, \d^\infty (x_0)'_j, \quad (x^2_{2i})'_j, \quad   \d^\infty  (x^2_{2i})'_j
 \} \quad 1\leqslant i\leqslant \frac{m-1}{2}
\]
it readily follows that $(x^1_{2i+1})'_j  =(x^2_{2i+1})'_j = g_{j-1}$ for $0\leqslant i\leqslant \frac{m-1}{2}$. Similarly, in the case where $m$ is even we get $(x^{1}_{2i})'_j =(x_0)'_j=(x^{2}_{2i})'_j =g_{j-1} $ for $ 1\leqslant i\leqslant \frac{m}{2}, \quad j\in \{  -n_m+2,...,n_m \}$.

 To show \eqref{it:connect}, we observe there are $4$ different cases depending on the choice of Alexander filtration $j$ of the generators in $\CFKh(S^3_1(K),\K)$, see Figure \ref{fig:ab} . 
 Recall that $\CFKh(S^3_1(K),\K, n_m)$ is given by $A_{n_m}\{i' \leqslant 0, j'=n_m\}$, and  for $j<n_m,$ $\CFKh(S^3_1(K),\K, j)$ is given by the mapping cone (Here $i'$ and $j'$ are used to denote the $i$ and $j$ filtration in $\CFKi(S^3,K)$, respectively. )
 \begin{equation*} 
 A_j\{i' \leqslant 0, j'=j\} \oplus A_{j+1}\{i' = 0, j' \leqslant j\} \xrightarrow {(h_j, \ v_{j+1})} B_{j+1}\{i'=0\}.
\end{equation*}

 For the following computation, we only care about the case when $j\geqslant 0$ and adopt the convention $x^1_0 :=x_0=: x^2_0$. 
\begin{enumerate}[label=(\alph{*}), ref=\alph{*}]
\item \label{it:ab1} Figure \ref{subfig:case1} represents the case when $n_{s-1}<j<n_s$ where $m-s$ is a positive even integer, note that the mapping cone consists of two summands that generate homology: the isolated generator
 $g_j :=(x^1_m)'_{j+1}$ in $B_{j+1}$, and a summand with $5$ generators consists of the right most generator in $A_j\{i' \leqslant 0, j'=j\}, U^{L_{s}+(n_{s}-j)} (x^1_s)_j$, $2$ generators in $B_{j+1}, U^{L_{s}+n_{s}}(x^2_{s-1})'_{j+1}$ and $U^{L_{s}+n_{s}}(x^2_{s})'_{j+1}$, and $2$ generators in $A_{j+1}, U^{L_{s}+n_{s}}(x^2_{s-1})_{j+1}$ and $U^{L_{s}+n_{s}}(x^2_{s})_{j+1}$.
In this case $U^{L_{s}+(n_{s}-j)} (x^1_s)_j+U^{L_{s}+n_{s}}(x^2_{s-1})'_{j+1}$ is the only upper corner generator , which consists of elements from $A_{j}$ and $B_{j+1}$. We call this kind of generator a {\em $\beta$ type generator}, and label them $\beta_j$. Direct computation yields $\d^\infty \beta_j =U^{L_{s}+(n_s-j)} g_{j-1} + U^{L_{s}+n_s} g_j$. 
\item \label{it:ab2} Figure \ref{subfig:case2} represents the case when $j=n_{s-1}$ where $m-s=2i,i\in \N$. The isolated generator $g_{n_{s-1}}$ is the only generator in this case.
\item \label{it:ab3} Figure \ref{subfig:case3} represents the case when $n_s<j<n_{s+1}$ where $m-s=2i,i>0$. Aside from $g_j$, the only generator is $U^{L_{s}} (x^1_s)_{j+1} + U^{L_{s}}(x^1_{s+1})'_{j+1}$, which consists of elements from $A_{j+1}$ and $B_{j+1}$. We call this kind of generator an {\em $\alpha$ type generator}, and label them $\alpha_j$. $\d^\infty \alpha_j = U^{L_{s} + j +1} g_{j+1} + U^{L_s} g_j$.
\item \label{it:ab4} Figure \ref{subfig:case4} represents the case when $j=n_s$ where $m-s=2i,i>0$. We have in this case three generators $\beta_{n_s}= U^{L_s} (x^1_s)_{n_s} +U^{L_s +n_s} (x^2_{s-1})'_{n_s +1}$, $\alpha_{n_s} := U^{L_s} (x^1_s)_{n_s +1} + U^{L_s} (x^1_{s+1})'_{n_s+1}$ and $g_{n_s}$. We compute $\d^\infty \beta_{n_s} = U^{L_s} g_{n_s -1} + U^{L_{s}+n_s} g_{n_s}$,$\d^\infty \alpha_{n_s}= U^{L_s +n_s +1} g_{n_s +1} + U^{L_s} g_{n_s}$.
\end{enumerate}
The above computation along with $\d^\infty \beta_{n_m} = g_{n_m -1}$ gives explicit relations for all generators on the top half.  After multiplying the upper corner generators by $U$ and $U^{-1}$ if necessary to make them have the same Maslov grading, we analyse the four cases for $j$ as follows
\begin{itemize}
\item[--]  case \eqref{it:ab1}, $g_j$ is in  $\d^{\infty}U^{a_1} \beta_{j+1}$ and  in $\d^{\infty}U^{a_2} \beta_{j}$, for some $a_1,a_2\in \Z$;
\item[--]  case \eqref{it:ab2}, $g_j$ is in  $\d^{\infty}U^{b_1} \beta_{j+1}$ and  in $\d^{\infty}U^{b_2} \alpha_{j-1}$, for some $b_1,b_2\in \Z$;
\item[--]  case \eqref{it:ab3}, $g_j$ is in  $\d^{\infty}U^{c_1} \alpha_j$       and  in $\d^{\infty}U^{c_2} \alpha_{j-1}$, for some $c_1,c_2\in \Z$;
\item[--]  case \eqref{it:ab4}, $g_j$ is in  $\d^{\infty}U^{d_1}\alpha_{j} $    and  in $\d^{\infty}U^{d_2} \beta_j$, for some $d_1,d_2\in \Z$;
   \end{itemize}
   This completes the proof of \eqref{it:connect}.
\begin{figure}
\subfigure[]{
\begin{tikzpicture}[scale=1]

	\begin{scope}[thin, black!20!white]
		\draw [<->] (0.5, -2) -- (0.5, 3);
	\end{scope}

\filldraw (0.5,0.5) circle (2pt) node(a){};

\draw [very thick] (0.5,-0.5) -- (-0.2,-0.5);
\draw [very thick] (0.5,-0.5) -- (0.5,-2);
\draw [very thick] (0.5,3) -- (0.5,1.5)	;
\draw [very thick] (0.5,1.5) -- (2.5,1.5)	;
\draw [very thick] (2.5,1.5) -- (2.5,-1.5)	;
\draw [very thick] (2.5,-1.5) -- (3,-1.5)	;
\draw [thick, dotted] (0.5,0.5) -- (-0.5,0.5);

\node [] at (-0.4,0.8) {$j$};
\node [] at (1.05,1.75) {\small $U^{L_s}x^1_s$};
\node [] at (2.8,1.8) {\small $U^{L_s}x^1_{s-1}$};
\node [] at (1.75,-1.6) {\small $U^{L_s}x^1_{s-2}$};
\node [above right] at (0.5,-0.5) {\small $U^{L_{s-2}}x^1_{s-1}$};
\node [below left] at (0.5,-0.5) {$n_{s-1}$};
\node [left] at (0.5,1.5) {$n_{s}$};

\end{tikzpicture}
\label{subfig:case1}
}
\subfigure[]{
\begin{tikzpicture}[scale=1]

	\begin{scope}[thin, black!20!white]
		\draw [<->] (0.5, -2) -- (0.5, 3);
	\end{scope}

\filldraw (0.5,0.5) circle (2pt) node(a){};

\draw [very thick] (0.5,0.5) -- (-0.2,0.5);
\draw [very thick] (0.5,0.5) -- (0.5,-1.75);
\draw [very thick] (0.5,3) -- (0.5,2.5)	;
\draw [very thick] (0.5,2.5) -- (2.5,2.5)	;
\draw [very thick] (2.5,2.5) -- (2.5,-0.5)	;
\draw [very thick] (2.5,-0.5) -- (3,-0.5)	;
\draw [thick, dotted] (0.5,0.5) -- (-0.5,0.5);

\node [] at (-0.4,0.8) {$j$};
\node [] at (1.05,2.75) {\small $U^{L_s}x^1_s$};
\node [] at (2.8,2.8) {\small $U^{L_s}x^1_{s-1}$};
\node [] at (1.75,-0.6) {\small $U^{L_s}x^1_{s-2}$};
\node [above right] at (0.5,0.5) {\small $U^{L_{s-2}}x^1_{s-1}$};
\node [below left] at (0.5,0.5) {$n_{s-1}$};
\node [left] at (0.5,2.5) {$n_{s}$};

\end{tikzpicture}
\label{subfig:case2}
}
\subfigure[]{
\begin{tikzpicture}[scale=1]

	\begin{scope}[thin, black!20!white]
		\draw [<->] (0.5, -2) -- (0.5, 3);
	\end{scope}

\filldraw (0.5,1) circle (2pt) node(a){};

\draw [very thick] (0.5,2.5) -- (0.5,-1);
\draw [very thick] (0.5,-1) -- (1,-1);
\draw [very thick] (0.5,2.5) -- (-0.2,2.5)	;

\draw [thick, dotted] (0.5,1) -- (-0.5,1);

\node [right] at (0.5,2.5) {$U^{L_{s} x^1_{s+1}}$}; 
\node [above right] at (0.5,-1) {$U^{L_{s} x^1_{s}}$}; 
\node [] at (-0.4,1.2) {$j$};
\node [below left] at (0.5,2.5) {$n_{s+1}$};
\node [below left] at (0.5,-1) {$n_{s}$};
\end{tikzpicture}
\label{subfig:case3}
}
\subfigure[]{
\begin{tikzpicture}[scale=0.8]

	\begin{scope}[thin, black!20!white]
		\draw [<->] (0.5, -2) -- (0.5, 3);
	\end{scope}

\filldraw (0.5,-1) circle (2pt) node(a){};

\draw [very thick] (0.5,2.5) -- (0.5,-1);
\draw [very thick] (0.5,-1) -- (1,-1);
\draw [very thick] (0.5,2.5) -- (-0.2,2.5)	;

\draw [thick, dotted] (0.5,-1) -- (-0.5,-1);

\node [right] at (0.5,2.5) {$U^{L_{s} x^1_{s+1}}$}; 
\node [above right] at (0.5,-1) {$U^{L_{s} x^1_{s}}$}; 
\node [] at (-0.4,-0.8) {$j$};
\node [below left] at (0.5,2.5) {$n_{s+1}$};
\node [below left] at (0.5,-1) {$n_{s}$};
\end{tikzpicture}
\label{subfig:case4}
}
\label{fig:ab}
\caption{Four different situations depending on $j$ filtration of $\CFKh(S^3_1(K),\K, j)$.   The dotted line shows $A_j\{i' \leqslant 0, j'=j\}$.}
\end{figure}

We prove \eqref{it:comparable} and \eqref{it:comparable2} together. From  the final part of the discussion in the proof of \eqref{it:connect} it is clear there are $4$ cases where adjacent upper corner generators share a same lower corner generator in the differential, see Figure \ref{fig:upperconnect}.

The proof is to verify in each of the $4$ cases the conclusion is correct. We will carry out the computation in case \eqref{it:ab1}; see Figure \ref{subfig:case1} and Figure \ref{subfig:up1}. The other three cases are similar and left for the reader.

In this case $j$ satisfies $n_{s-1}<j<n_s$ where $m-s=2i,i\in \N$, 
\begin{align*}
\beta_j & =U^{L_{s}+( n_s -j )} (x^1_s)_j + U^{L_s +n_s}(x^2_{s-1})'_{j+1}, \\
\d^\infty \beta_j & =U^{L_{s}+( n_s -j )} g_{j-1} + U^{L_s +n_s}g_{j},  \\
\beta_{j+1} & =U^{L_{s}+( n_s -j -1 )} (x^1_s)_{j+1} + U^{L_s +n_s}(x^2_{s-1})'_{j+2}, \\
\d^\infty \beta_{j+1} & =U^{L_{s}+( n_s -j -1 )} g_{j} + U^{L_s +n_s}g_{j+1}.
\end{align*}
Thus $U^{-(L_s +n_s -j -1) } \beta_{j+1}$ and $U^{-(L_s + n_s)} \beta_{j}$ have the same Maslov grading. Clearly we have $(L_s+n_s -j -1, L_s + n_s)\prec (L_s + n_s, L_s + n_s + j)$.

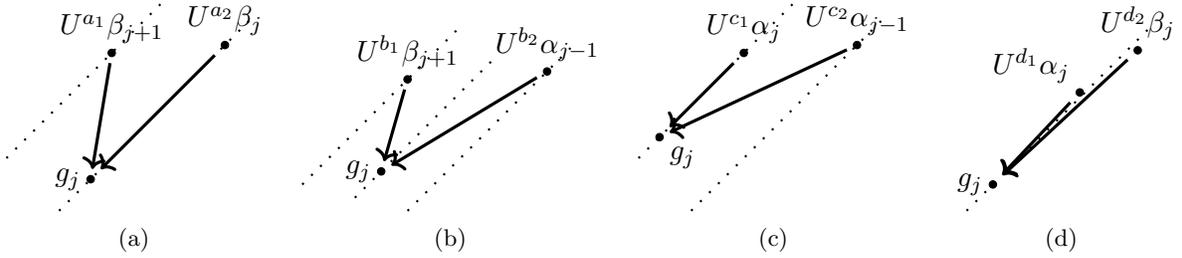
\begin{figure}
\subfigure[]{
\begin{tikzpicture}[scale=0.7]

	\draw [thick, loosely dotted] (-2.5,-0.5)--(0.5,2.5);
  \draw [thick, loosely dotted] (-1.5,-1.5)--(2.2,2.2);
	
	\filldraw (-0.5,1.5) circle (2pt) node (a) {};
	\filldraw (1.65,1.65) circle (2pt) node (b) {};
	\filldraw (-0.9,-0.9) circle (2pt) node (c) {};
	\draw [very thick, ->] (a)--(c);
	\draw [very thick, ->] (b)--(c);
		
		\node [above] at (a) {$U^{a_1} \beta_{j+1}$};
		\node [above] at (b) {$U^{a_2} \beta_{j}$};
		\node [left] at (c) {$g_{j}$};
	
	\end{tikzpicture}
	\label{subfig:up1}
	}
	\subfigure[]{
\begin{tikzpicture}[scale=0.7]
	
	\draw [thick, loosely dotted] (-2.5,-0.5)--(0.5,2.5);
  \draw [thick, loosely dotted] (-1.5,-0.75)--(1.2,1.95);
  \draw [thick, loosely dotted] (-0.5,-1)--(2.7,2.2);
	
	\filldraw (-0.5,1.5) circle (2pt) node (a) {};
	\filldraw (2.15,1.65) circle (2pt) node (b) {};
	\filldraw (-1,-0.25) circle (2pt) node (c) {};
	\draw [very thick, ->] (a)--(c);
	\draw [very thick, ->] (b)--(c);
		
		\node [above] at (a) {$U^{b_1} \beta_{j+1}$};
		\node [above] at (b) {$U^{b_2} \alpha_{j-1}$};
		\node [left] at (c) {$g_{j}$};

	\end{tikzpicture}
	\label{subfig:up2}
	}
	\subfigure[]{
\begin{tikzpicture}[scale=0.7]

	\draw [thick, loosely dotted] (-2.5,-0.5)--(0.5,2.5);
  \draw [thick, loosely dotted] (-1.5,-1.5)--(2.2,2.2);
	
	\filldraw (-0.5,1.5) circle (2pt) node (a) {};
	\filldraw (1.65,1.65) circle (2pt) node (b) {};
	\filldraw (-2.1,-0.1) circle (2pt) node (c) {};
	\draw [very thick, ->] (a)--(c);
	\draw [very thick, ->] (b)--(c);
		
		\node [above] at (a) {$U^{c_1} \alpha_{j}$};
		\node [above] at (b) {$U^{c_2} \alpha_{j-1}$};
		\node [below right] at (c) {$g_{j}$};
	
	\end{tikzpicture}
	\label{subfig:up3}
	}
	\subfigure[]{
\begin{tikzpicture}[scale=0.7]

	 \draw [thick, loosely dotted] (-1.5,-0.75)--(2,2.75);
	 
	 \filldraw (1.75,2.3) circle (2pt) node (a) {};
	\filldraw (0.65,1.5) circle (2pt) node (b) {};
	\filldraw (-1,-0.25) circle (2pt) node (c) {};
	\draw [very thick, ->] (a)--(c);
	\draw [very thick, ->] (b)--(c);
		
				\node [above] at (a) {$U^{d_2} \beta_{j}$};
		\node [above left] at (b) {$U^{d_1} \alpha_{j}$};

		\node [left] at (c) {$g_{j}$};
	 
	\end{tikzpicture}
	\label{subfig:up4}
	}
	\caption{The four possible situations where adjacent upper corner generators share a same lower corner generator in the differential. Each dotted line consists of points with coordinates $(i,j)$ where $i-j = $ constant. }
	\label{fig:upperconnect}
	\end{figure}

\section{Connected knot floer complex}

In this section, we review the local equivalence studied in \cite{ZemConn} and define the connected knot Floer complex, using the same argument in the construction of the connected Heegaard Floer homology in \cite{HHL}.

\subsection{Knot Floer homology}
 In the previous parts of the paper, we look at the knot complex in the ring $\F[U]$, but now it is necessary to move to a refined version as we want the extra grading information it encodes. 
Let $\F[\U,\V]$ be the polynomial ring with $2$ variables over finite field $\F=\Z/2\Z$. Here we denote the new variables $\U,\V$ to distinguish from the variable of $\F[U]$.
We start from reviewing some background of knot Floer homology over the ring  $\F[\U,\V]$.

 For a knot $K$ in a integer homology sphere $Y$, let $\cH = (\Sigma, \bfalpha, \bfbeta, w, z)$ be a doubly-pointed Heegaard diagram compatible with $(Y, K)$. Define $\CFKUV(K)$ to be the chain complex freely generated over the ring $\mathcal{R}=\F[\U,\V]$ by $\bfx \in \bbT_{\bfalpha} \cap \bbT_{\bfbeta}$ with differential
\[ \d \bfx = \sum_{\bfy \in \bbT_{\bfalpha} \cap \bbT_{\bfbeta}}  \sum_{\substack{\phi \in \pi_2(\bfx, \bfy) \\ \mu(\phi)=1}} \U^{n_w(\phi)} \V^{n_z(\phi)} \bfy, \]
where, as usual, $\pi_2(\bfx, \bfy)$ denotes Whitney disks connecting $\bfx$ to $\bfy$, and $\mu(\phi)$ denotes the Maslov index of $\phi$.
  
Define the relative bigradings $\gr_U$  and  $\gr_V$ as follow:
\begin{align*}
	\gr_U(\bfx, \bfy) &= \mu(\phi) -2n_w(\phi) \\
	\gr_V(\bfx, \bfy) &= \mu(\phi) -2n_z(\phi). 
\end{align*}
 where  $\gr_U$ is the previous Maslov grading and $\gr_V$ is the grading after exchanging the role of two base points in the Heegaard Floer data. We often refer to $\gr_U$ as $U$-grading and  $\gr_V$ $V$-grading. Define the relative {\em Alexander grading}  
 by
$$ A(\bfx, \bfy) = \frac{1}{2} (\gr_U(\bfx, \bfy) - \gr_V(\bfx, \bfy)) = n_z(\phi) - n_w(\phi). $$

\begin{figure}
\subfigure[]{
\label{subfig:A0}
\begin{tikzpicture}

\begin{scope}[thin, black!20!white]
\end{scope}

\foreach \x in {-2,...,0}
{	
	
	\filldraw (\x-0.5, \x+0.5) circle (2pt) node[] (a){};
	\filldraw (\x+0.5, \x+0.5) circle (2pt) node[] (b){};
	\filldraw (\x+0.5, \x-0.5) circle (2pt) node[] (c){};

	\draw [very thick, ->] (b) -- (a);
	\draw [very thick, ->] (b) -- (c);

}	
\node[above] at (0.5,0.5) {\scriptsize$x$};
\node[above left] at (-0.5,0.5) {\scriptsize$\U y$};
\node[above right] at (0.5,-0.5) {\scriptsize$\V z$};

\node[above] at (-0.5,-0.5) {\scriptsize$\U\V x$};
\node[above left] at (-1.5,-0.5) {\scriptsize$\U^2\V y$};
\node[above right] at (-0.5,-1.5) {\scriptsize$\U\V^2 z$};

\node[above] at (-1.5,-1.5) {\scriptsize$\U^2\V^2 x$};
\node[above left] at (-2.5,-1.5) {\scriptsize$\U^3\V^2 y$};
\node[above right] at (-1.5,-2.5) {\scriptsize$\U^2\V^3 z$};
\end{tikzpicture}
}
\subfigure[]{
\label{subfig:A1}
\begin{tikzpicture}

\begin{scope}[thin, black!20!white]
\end{scope}

\foreach \x in {-2,...,0}
{	
	
		\filldraw (\x+0.5, \x+1.5) circle (2pt) node[] (a){};
	\filldraw (\x+0.5, \x+0.5) circle (2pt) node[] (b){};
	\filldraw (\x+0.5, \x-0.5) circle (2pt) node[] (c){};

	\draw [very thick, ->] (b) -- (c);
        }
     \filldraw (-2.5,-1.5) circle(2pt) node [] (d){};
	
	\draw [very thick, ->] (-1.5,-1.5) -- (d);

   \draw [very thick, ->] (0.42,0.5) -- (-0.42,0.5);

	\draw [very thick, ->] (-0.42,-0.5) -- (-1.42,-0.5);
	
\node[above] at (0.5,0.5) {\scriptsize$\V x$};
\node[above left] at (0.5,1.5) {\scriptsize$ y$};
\node[above right] at (0.5,-0.5) {\scriptsize$\V^2 z$};

\node[above] at (-0.5,-0.5) {\scriptsize$\U\V^2 x$};
\node[above left] at (-0.5,0.5) {\scriptsize$\U\V y$};
\node[above right] at (-0.5,-1.5) {\scriptsize$\U\V^3 z$};

\node[above] at (-1.5,-1.5) {\scriptsize$\U^2\V^3 x$};
\node[above left] at (-1.5,-0.5) {\scriptsize$\U^2\V^2 y$};
\node[above right] at (-1.5,-2.5) {\scriptsize$\U^2\V^4 z$};

\node[above left] at (-2.5,-1.5) {\scriptsize$\U^3\V^3 y$};
\end{tikzpicture}
}
\label{fig:T23}
\caption{ Knot complex $\CFKUV(T_{2,3})$, where $T_{2,3}$ is the right handed trefoil. Figure \ref{subfig:A0} shows the Alexander grading  $0$ complex $\CFKUV (K, 0)$, while Figure \ref{subfig:A1} shows the Alexander graing $1$ complex $\CFKUV (K, 1)$. }
\end{figure}

Note that $\U$ lowers $\gr_U$ by $2$, preserves $\gr_V$, and lowers $A$ by $1$, while $\V$ preserves $\gr_U$, lowers $\gr_V$ by $2$, and increases $A$ by $1$. Importantly, the differential preserves  Alexander grading. Hence the complex $\CFKUV(K)$ splits as an $\F[\U,\V]$-module over the Alexander grading:
\[ \CFKUV (K) = \bigoplus_{s \in \Z} \CFKUV (K, s), \]
The chain complex $\CFKUV (K, s)$ is isomorphic to the complex $A^{-}_s$ (See the beginning of Section 4 for the definition). One can think of  $\CFKUV(K)$ as a direct sum of $A^{-}_s$.  We have  
\begin{align*}
\U &\co  \CFKUV (K, s) \to  \CFKUV (K, s-1) \\
\V &\co  \CFKUV (K, s) \to  \CFKUV (K, s+1).
\end{align*}

We identify $\CFKi(K)$ with $\CFKUV(K,0) \otimes \F[\U,\V,(\U\V)^{-1}]$, where $\F[\U,\V,(\U\V)^{-1}]$ is the localization of $\F[\U,\V]$ at the ideal $(\U\V)$. While previously multiplication by $U$ is the chain isomorphism that shift the complex down the diagonal by $1$ unit, now this action is refined to be multiplication by $\U\V$. The complex $\CFKUV(K)$ is generated by elements without $\U,\V$ decoration.
\subsection{Self-local equivalence}

Local equivalence encodes the information of homology concordance. This fact allows us to extend various invariants originally defined only for knots in $S^{3}$ to homology concordance invariants for knots in arbitrary integer homology spheres. To say it more precisely, we have the following:

\begin{definition}[Definition 2.4 in \cite{ZemConn}]
For knots $K_1,K_2$ in integer homology spheres $Y_1,Y_2$ respectively, if there exist  $\F[\U,\V]$-equivariant chain maps 
\[ f \co \CFKUV(Y_1, K_1) \rightarrow \CFKUV(Y_2, K_2) \qquad \textup{ and } \qquad g \co \CFKUV(Y_2, K_2) \rightarrow \CFKUV(Y_1, K_1) \]
such that $f$ and $g$ preserve absolute $\U,\V$ gradings and induce isomorphisms on $(\U\V)^{-1}H_{*}$, then $\CFKUV(Y_1,K_1)$ and $\CFKUV(Y_2,K_2)$ are {\em locally equivalent}.
\end{definition}

Here  for a complex $C$,  $(\U\V)^{-1}H_*(C)$  is defined to be $H_*(C\otimes_{\F[\U,\V]}  \F[\U,\V,(\U\V)^{-1}])$, where $\F[\U,\V,(\U\V)^{-1}]$ is the localization of $\F[\U,\V]$ at the ideal $(\U\V)$.  For any knot $K$ in an integer homology sphere $Y$, there is a relatively bi-graded isomorphism between
$  (\U\V)^{-1}H_{*}(\CFKUV(Y,K))$ and  $\F[\U,\V,(\U\V)^{-1}]. $ We will denote local equivalence by $\simeq$.  It is easy to verify that $\simeq$ is an equivalence relation.

\begin{proposition}[Theorem 1.5 in \cite{ZemConn}]\label{prop:zem}
If $(Y_1,K_1)$ and $(Y_2,K_2)$ are homology concordant, then
 $$\CFKUV(Y_1,K_1) \simeq \CFKUV(Y_2,K_2).$$
\end{proposition}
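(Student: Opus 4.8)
The plan is to realize the two local equivalence maps as decorated link cobordism maps in the sense of Zemke \cite{Zem16, ZemConn}. A homology concordance between $(Y_1,K_1)$ and $(Y_2,K_2)$ is a smoothly embedded annulus $A$ in a homology cobordism $W$ from $Y_1$ to $Y_2$ with $\partial A = -K_1 \sqcup K_2$. First I would pick a pair of parallel embedded arcs on $A$ running from $K_1$ to $K_2$; these cut $A$ into two rectangles and thereby upgrade $(W,A)$ to a decorated link cobordism $\mathcal{F}$ from $(Y_1,K_1)$ to $(Y_2,K_2)$. Turning the cobordism around gives $\overline{\mathcal{F}}$ from $(Y_2,K_2)$ to $(Y_1,K_1)$.

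Next I would invoke the functoriality of the decorated cobordism maps over $\mathcal{R}=\F[\U,\V]$: the cobordism $\mathcal{F}$ induces an $\F[\U,\V]$-equivariant chain map $f \co \CFKUV(Y_1,K_1) \to \CFKUV(Y_2,K_2)$, and $\overline{\mathcal{F}}$ induces a map $g$ in the reverse direction. These are the candidate local equivalences, and it remains to check the two conditions in the definition of local equivalence for each of them. For the grading condition I would appeal to Zemke's grading-shift formula, which expresses the bidegree of $f$ in terms of the Euler characteristic and normal Euler number of $A$, the decorating arcs, and $b_1(W)$, $b^+_2(W)$, $\sigma(W)$. Since $A$ is a concordance annulus (Euler characteristic zero, trivial normal bundle) and $W$ is a homology cobordism, every contribution vanishes, so $f$ and $g$ preserve the absolute $\gr_U$ and $\gr_V$ gradings.

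The heart of the argument, and the step I expect to be the main obstacle, is showing that $f$ and $g$ induce isomorphisms on $(\U\V)^{-1}H_*$. Here I would use that inverting $\U\V$ collapses the knot data: there is a relatively graded quasi-isomorphism $(\U\V)^{-1}\CFKUV(Y,K) \simeq \F[\U,\V,(\U\V)^{-1}] \otimes_{\F[U,U^{-1}]} \CFi(Y)$ with $U$ acting as $\U\V$, and under this identification the localized map $(\U\V)^{-1}f$ corresponds to the Ozsv\'ath--Szab\'o cobordism map $F^\infty_W$ on $\CFi$. Making this correspondence precise is the technical core: one must track how Zemke's decorated-surface construction degenerates once $\U\V$ is inverted and match it with the closed-cobordism maps of \cite{OS04}. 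Granting this, $F^\infty_W$ is an isomorphism because $W$ is a homology cobordism between integer homology spheres: $\HFi$ of such a manifold is a single free rank-one $\F[U,U^{-1}]$-module, and $F^\infty_W$ is a map between such modules that is easily seen to be nonzero, hence an isomorphism. Applying the same reasoning to $\overline{W}$ handles $g$. Note that we never require $g\circ f$ or $f\circ g$ to be chain homotopic to the identity: the definition of local equivalence asks only that each of $f,g$ be an $\F[\U,\V]$-equivariant, grading-preserving chain map inducing an isomorphism after inverting $\U\V$, which is exactly what this produces.
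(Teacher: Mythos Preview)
The paper does not supply its own proof of this proposition; it is imported verbatim as Theorem~1.5 of \cite{ZemConn} and used as a black box. Your sketch is a faithful outline of Zemke's actual argument: realize the two maps as decorated concordance cobordism maps on $\CFKUV$, use the grading-shift formula (all terms vanish for an annulus in a homology cobordism) to see they preserve the absolute bigrading, and then identify the localized maps with the closed Ozsv\'ath--Szab\'o cobordism map on $\CFi$.

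The one place your outline is a bit glib is the last step. Saying $F^\infty_W$ is ``easily seen to be nonzero, hence an isomorphism'' skips the actual content: the clean reason is that a homology cobordism has $b_1(W)=b_2^+(W)=0$, and Ozsv\'ath--Szab\'o show that under these conditions the map on $\HFi$ (in the unique torsion $\spinc$ structure) is an isomorphism. Alternatively one can glue $W$ to $\overline{W}$ and use the composition law. Either way this is standard, and your overall strategy matches Zemke's; there is simply nothing in the present paper to compare it against.
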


\begin{definition}
Let $\J$ be the set of local equivalence class of chain complexes over the ring $\F[\U,\V]$. The operation over $\J$ is induced by tensor product of chain complexes. For complex $C\in \J$, define the inverse  of  $C$  to be the dual complex $C^*$   and the identity element is $\F[\U,\V]$.
\end{definition}

\begin{proposition}[Proposition 2.6 in \cite{ZemConn}]
 $\J$ is a well-defined abelian group.
\end{proposition}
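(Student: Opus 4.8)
The plan is to verify the group axioms one by one, working with the appropriate class of complexes --- finitely generated free bigraded $\F[\U,\V]$-complexes $C$ for which there is a relatively bigraded isomorphism $(\U\V)^{-1}H_*(C)\cong\F[\U,\V,(\U\V)^{-1}]$, exactly as was recorded above for $\CFKUV$ of a knot in an integer homology sphere --- and using repeatedly the following observation: after inverting $\U\V$, such a $C$ becomes a bounded complex of free $\F[\U,\V,(\U\V)^{-1}]$-modules whose homology is free and supported in a single bigrading, hence splits as (a rank-one free module in bigrading $(0,0)$) $\oplus$ (a bounded acyclic, hence contractible, complex). In particular $(\U\V)^{-1}C$ is homotopy equivalent to $\F[\U,\V,(\U\V)^{-1}]$.

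First I would check that $\otimes_{\F[\U,\V]}$ descends to $\J$. If $f\co C\to C'$, $g\co C'\to C$ and $p\co D\to D'$, $q\co D'\to D$ are $\F[\U,\V]$-equivariant, grading-preserving chain maps inducing isomorphisms on $(\U\V)^{-1}H_*$, then, localization being exact and commuting with $\otimes_{\F[\U,\V]}$, the map $(\U\V)^{-1}f$ is a quasi-isomorphism between bounded complexes of free modules, hence a homotopy equivalence; likewise $(\U\V)^{-1}p$, so $(\U\V)^{-1}(f\otimes p)=(\U\V)^{-1}f\otimes(\U\V)^{-1}p$ is a homotopy equivalence, in particular an isomorphism on homology. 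The analogous statement holds for $g\otimes q$, and both $f\otimes p$ and $g\otimes q$ are $\F[\U,\V]$-equivariant and grading-preserving, so $C\otimes D\simeq C'\otimes D'$ and $[C]\cdot[D]:=[C\otimes_{\F[\U,\V]}D]$ is well defined. The class is closed under $\otimes$ because $(\U\V)^{-1}(C\otimes D)$ is homotopy equivalent to $\F[\U,\V,(\U\V)^{-1}]$ in bigrading $(0,0)$ (tensor together the two such equivalences). Associativity and commutativity are inherited from the canonical bigraded isomorphisms for $\otimes_{\F[\U,\V]}$, which are in particular local equivalences, and $\F[\U,\V]$ in bigrading $(0,0)$ is a two-sided identity since $C\otimes_{\F[\U,\V]}\F[\U,\V]=C$ and $\F[\U,\V]$ lies in the class.

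The main content is the existence of inverses: $[C]\cdot[C^*]=[\F[\U,\V]]$, where $C^*=\operatorname{Hom}_{\F[\U,\V]}(C,\F[\U,\V])$ carries the dual bigrading. Since $C$ is finitely generated and free there is a natural identification $C\otimes_{\F[\U,\V]}C^*\cong\operatorname{End}_{\F[\U,\V]}(C)$, which gives a coevaluation chain map $\eta\co\F[\U,\V]\to C\otimes C^*$, $1\mapsto\id_C$, and an evaluation chain map $\varepsilon\co C\otimes C^*\to\F[\U,\V]$, $c\otimes\phi\mapsto\phi(c)$; both are $\F[\U,\V]$-equivariant, and with the dual grading conventions on $C^*$ both are grading-preserving. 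It remains to see that $\eta$ and $\varepsilon$ induce isomorphisms on $(\U\V)^{-1}H_*$. After inverting $\U\V$, the observation above identifies $(\U\V)^{-1}C$, hence $(\U\V)^{-1}C^*$ and $(\U\V)^{-1}(C\otimes C^*)$, with $\F[\U,\V,(\U\V)^{-1}]$ in bigrading $(0,0)$ up to homotopy; under these identifications $\eta$ becomes the unit $1\mapsto1$ and $\varepsilon$ the canonical perfect pairing of a rank-one free module with its dual, each an isomorphism on homology. Hence $C\otimes C^*\simeq\F[\U,\V]$. One also checks $C^*$ lies in the class: for finitely generated free complexes $(\U\V)^{-1}(C^*)\cong((\U\V)^{-1}C)^*$, and dualizing sends rank-one free homology in bigrading $(0,0)$ to the same.

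I expect the inverse step to be the principal obstacle, though it amounts to two small homological-algebra points rather than anything deep: fixing the grading conventions on the dual complex $C^*$ so that evaluation and coevaluation are honestly bigrading-preserving, and justifying that inverting $\U\V$ turns $C$ into a complex homotopy equivalent to the trivial one (the splitting statement in the first paragraph). Granted those, every verification above is formal, so $\J$ with product $\otimes_{\F[\U,\V]}$, identity $\F[\U,\V]$, and inverse $[C]\mapsto[C^*]$ is an abelian group.
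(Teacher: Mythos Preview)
The paper does not supply its own proof of this proposition; it is quoted directly from Zemke \cite{ZemConn}, so there is no in-paper argument to compare yours against. Your outline is essentially correct and is the standard one: well-definedness, associativity, commutativity, and identity are formal, and inverses are handled by the evaluation and coevaluation maps $\eta,\varepsilon$ between $\F[\U,\V]$ and $C\otimes C^*$, which are $\F[\U,\V]$-linear, bigrading-preserving chain maps.

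One step deserves more care. You justify the key splitting by saying that $(\U\V)^{-1}C$ is ``a bounded complex of free $\F[\U,\V,(\U\V)^{-1}]$-modules,'' intending to invoke the fact that a quasi-isomorphism between bounded complexes of projectives is a homotopy equivalence. But there is no single $\Z$-grading on $C$ for which $\partial$ has degree $-1$ while $\U,\V$ have degree $0$: the bidegrees are $(-1,-1)$ for $\partial$, $(-2,0)$ for $\U$, and $(0,-2)$ for $\V$, so this phrasing does not quite parse. The conclusion you want is nevertheless true, and the cleanest fix uses the bigrading directly. Once $\U\V$ is inverted both $\U$ and $\V$ become units, and over $\F[\U^{\pm1},\V^{\pm1}]$ every bigraded module is free (it is determined by finitely many $\F$-vector spaces, one for each parity class of bidegrees). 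Hence $\im\partial$ is free, the short exact sequences $0\to\im\partial\to\ker\partial\to H\to0$ and $0\to\ker\partial\to(\U\V)^{-1}C\to\im\partial\to0$ both split, and one obtains $(\U\V)^{-1}C\cong\F[\U^{\pm1},\V^{\pm1}]\oplus(\text{contractible})$ as you assert. With this adjustment the remainder of your argument --- in particular that $(\U\V)^{-1}\eta$ and $(\U\V)^{-1}\varepsilon$ become the unit and the canonical pairing, hence isomorphisms on homology --- goes through unchanged.
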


Taking into the consideration of the above proposition, we will use additive notion to indicate the tensor product of the chain complexes. 

Using the relation of local equivalence from a complex to itself, we can define the connected knot complex, which will be our key ingredient in the proof. We start by recalling the definition of self-local equivalence.

\begin{definition}[Definition 3.1 in \cite{HHL}]
Let $C=\CFKUV(Y,K)$, where $K$ is a knot in an integer homology sphere $Y$. An absolute bigrading preserving chain map
$$  f\co C \longrightarrow C  $$
is a {\em self-local equivalence} if $f$ induces an isomorphism on $(\U\V)^{-1}H_{*}$.
\end{definition}

The set of self-local equivalences come with a preorder defined by comparing the kernel. A self-local equivalence $f$ is called {\em maximal} if for any other self-local equivalence $g$, ker $f$ $\subset $ ker $g$ implies ker $f$ $=$ ker $g$. Maximal self-local equivalences always exist since $C$ is finitely generated. 

\begin{lemma}[Lemma 3.4 in \cite{HHL}] \label{le:max}
If $f, g \co  C \rightarrow C $ are maximal self-local equivalences of $C $, then  $f|_{\mathrm{im} g} \co \mathrm{im} g \rightarrow  \mathrm{im} f$
 is an isomorphism of chain complexes.
\end{lemma}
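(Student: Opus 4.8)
The plan is to use the fact that composites of self-local equivalences are again self-local equivalences, together with the maximality hypothesis, and to close the argument with a dimension count in each bidegree. First I would observe that if $f,g\colon C\to C$ are self-local equivalences, then so are $g\circ f$ and $f\circ g$: each is a bigrading-preserving chain map (a composite of such), and since $f_*$ and $g_*$ are isomorphisms on $(\U\V)^{-1}H_*$, so are $(g\circ f)_*=g_*\circ f_*$ and $(f\circ g)_*=f_*\circ g_*$. Because $\ker f\subseteq\ker(g\circ f)$ and $g\circ f$ is a self-local equivalence, maximality of $f$ forces $\ker(g\circ f)=\ker f$; by the symmetric argument applied to $g$, one gets $\ker(f\circ g)=\ker g$.

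Next I would deduce injectivity of both restrictions $f|_{\mathrm{im}\,g}\colon\mathrm{im}\,g\to\mathrm{im}\,f$ and $g|_{\mathrm{im}\,f}\colon\mathrm{im}\,f\to\mathrm{im}\,g$. If $x=g(y)\in\mathrm{im}\,g$ and $f(x)=0$, then $y\in\ker(f\circ g)=\ker g$, so $x=g(y)=0$; the other case is identical using $\ker(g\circ f)=\ker f$. Here I would also note that $\mathrm{im}\,f$ and $\mathrm{im}\,g$ are $\F[\U,\V]$-subcomplexes of $C$, being images of equivariant chain maps, and that these restrictions are themselves equivariant chain maps landing in the stated targets (since $f(\mathrm{im}\,g)\subseteq f(C)=\mathrm{im}\,f$).

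For the conclusion I would use bidegree-wise finiteness. In any fixed bidegree $(a,b)$ with respect to $(\gr_U,\gr_V)$, the complex $C$ is a finite-dimensional $\F$-vector space: $C$ is free over $\F[\U,\V]$ on the finitely many points of $\bbT_\bfalpha\cap\bbT_\bfbeta$, and a homogeneous element of bidegree $(a,b)$ has, for each generator $x$, at most one admissible monomial $\U^i\V^j x$, with $i,j$ determined by $\gr_U(x),\gr_V(x)$ and $(a,b)$. Hence $(\mathrm{im}\,f)_{(a,b)}$ and $(\mathrm{im}\,g)_{(a,b)}$ are finite-dimensional, and the two injections above give
\[
\dim_{\F}(\mathrm{im}\,g)_{(a,b)}\ \le\ \dim_{\F}(\mathrm{im}\,f)_{(a,b)}\ \le\ \dim_{\F}(\mathrm{im}\,g)_{(a,b)},
\]
so equality holds in each bidegree and $f|_{\mathrm{im}\,g}$ is surjective onto $(\mathrm{im}\,f)_{(a,b)}$ there. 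Therefore $f|_{\mathrm{im}\,g}$ is a bijective, $\F[\U,\V]$-equivariant chain map $\mathrm{im}\,g\to\mathrm{im}\,f$, hence an isomorphism of chain complexes.

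The step I expect to be the main obstacle is surjectivity: injectivity is immediate once the kernels are pinned down, but to see that $f(\mathrm{im}\,g)$ exhausts all of $\mathrm{im}\,f$ one genuinely needs \emph{both} injections, obtained from $\ker(f\circ g)=\ker g$ and $\ker(g\circ f)=\ker f$, together with the bidegree-wise finiteness of $\CFKUV(Y,K)$ — a Schr\"oder--Bernstein-type argument. Omitting the symmetric half of the first two steps would leave the proof incomplete.
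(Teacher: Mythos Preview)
Your argument is correct and follows the same approach as the paper: show $f\circ g$ and $g\circ f$ are self-local equivalences, use maximality to pin down the kernels, deduce that both $f|_{\mathrm{im}\,g}$ and $g|_{\mathrm{im}\,f}$ are injective, and conclude by a finiteness argument. Your bidegree-wise dimension count makes explicit what the paper leaves as ``since both $\mathrm{im}\,g$ and $\mathrm{im}\,f$ are finitely generated,'' which over the non-field ring $\F[\U,\V]$ is really the point that needs spelling out.
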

\begin{proof}
Consider $f \circ g \co C \rightarrow C.$ It is clearly a self-local equivalence, so ker $f\circ g \subset$ ker $g$. But on the other hand   ker $g\subset$ ker $f\circ g $, so  ker $f\circ g =$ ker $g$. Hence $f|_{\text{im} g}$ is injective. Parallelly $g|_{\text{im} f}$ is injective. Since both im $g$ and im $f$ are finitely generated complex, it follows that $f|_{\text{im} g}$ is indeed an isomorphism of chain complexes.
\end{proof}

As a result, the following is well-defined. 

\begin{definition}[Definition 3.9 in \cite{HHL}]\label{def:conn}
Let $C=\CFKUV(Y,K)$, where $K$ is a knot in an integer homology sphere $Y$ and $f$ a maximal self-local equivalence. The {\em connected complex} is defined to be im $f$, denoted $C_{\mathrm{conn}}$ .
\end{definition}

\begin{proposition}[Lemma 3.6 in \cite{HHL}]\label{pro:acyc1}
Suppose $C_{\mathrm{conn}}$ is the connected complex of $C=\CFKUV(Y,K)$, then $C$ is isomorphic to $C_{\mathrm{conn}} \oplus A$ as chain complex, where $A$ is a subcomplex such that $A\otimes_{\F[\U,\V]} \F[\U,\V,(\U\V)^{-1}]$ has trivial homology.
\end{proposition}

\begin{proof}
Suppose $f$ is a maximal self-local equivalence, then by Lemma \ref{le:max}, $f|_{\mathrm{im} f}$ is injective.  Let $A=\mathrm{ker} f$ and we aim to show $C$ is isomorphic to $\mathrm{im} f\oplus \mathrm{ker} f$  as chain complex. There is a standard algebra argument to prove this.  For $m \in C$, $(n,p) \in  \mathrm{im} f\oplus \mathrm{ker} f$,  define maps in both directions:
\begin{align*}
(n,p) &\longrightarrow p+\big(f|_{\mathrm{im} f}\big)^{-1}(n)\\
m &\longrightarrow (f(m),\big(f|_{\mathrm{im} f}\big)^{-1}\circ f(m)+m).
\end{align*}  
This gives desired isomorphism.  The fact that $A\otimes_{\F[\U,\V]} \F[\U,\V,(\U\V)^{-1}]$ has trivial homology follows from the definition of self-local equivalence. 
\end{proof}

\begin{proposition}[Proposition 3.10 in \cite{HHL}]\label{pro:acyc2}
Let $C=\CFKUV(Y_1,K_1)$ and $C'=\CFKUV(Y_2,K_2)$ be locally equivalent knot complexes, then $C_{\mathrm{conn}}$ and $C'_{\mathrm{conn}}$ are isomorphic as chain complexes.
\end{proposition}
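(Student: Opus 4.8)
The plan is to upgrade the argument behind Lemma~\ref{le:max} so that it passes between the two complexes. Write $C = \CFKUV(Y_1,K_1)$ and $C' = \CFKUV(Y_2,K_2)$, and fix $\F[\U,\V]$-equivariant, absolutely bigraded chain maps $F\co C\to C'$ and $G\co C'\to C$ witnessing the local equivalence, so that each induces an isomorphism on $(\U\V)^{-1}H_*$. Choose maximal self-local equivalences $f\co C\to C$ and $f'\co C'\to C'$; by Definition~\ref{def:conn} we may take $C_{\mathrm{conn}} = \im f$ and $C'_{\mathrm{conn}} = \im f'$ as genuine subcomplexes. I would then introduce the two bigraded chain maps
\[
\alpha := (f'\circ F)|_{C_{\mathrm{conn}}}\co C_{\mathrm{conn}}\longrightarrow C'_{\mathrm{conn}},
\qquad
\beta := (f\circ G)|_{C'_{\mathrm{conn}}}\co C'_{\mathrm{conn}}\longrightarrow C_{\mathrm{conn}},
\]
which land where claimed because $\im(f'\circ F)\subseteq\im f'$ and $\im(f\circ G)\subseteq\im f$. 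It suffices to show that $\beta\circ\alpha$ is an automorphism of $C_{\mathrm{conn}}$ and $\alpha\circ\beta$ an automorphism of $C'_{\mathrm{conn}}$, for then $\alpha$ is injective (from $\beta\circ\alpha$) and surjective (from $\alpha\circ\beta$), hence an isomorphism of chain complexes, grading-preserving by construction.

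Next I would study the composite $\phi := f\circ G\circ f'\circ F\circ f\co C\to C$. As a composition of self-local equivalences and local equivalences it is again a self-local equivalence, and since $f$ occurs on the right we have $\ker f\subseteq\ker\phi$; maximality of $f$ forces $\ker\phi=\ker f$, so $\phi$ is itself a maximal self-local equivalence. Because $f$ also occurs on the left, $\im\phi\subseteq\im f = C_{\mathrm{conn}}$, so $\phi$ restricts to an endomorphism of $C_{\mathrm{conn}}$. Applying Lemma~\ref{le:max} to the maximal self-local equivalences $f$ and $\phi$ shows $\phi|_{C_{\mathrm{conn}}}\co C_{\mathrm{conn}}\to\im\phi$ is an isomorphism of chain complexes, in particular injective; applying it to $f$ and $f$ shows $\tau := f|_{C_{\mathrm{conn}}}$ is an automorphism of $C_{\mathrm{conn}}$. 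To see that the injective, bigrading-preserving endomorphism $\phi|_{C_{\mathrm{conn}}}$ is in fact surjective, I would note that $\CFKUV$ is freely and finitely generated over $\F[\U,\V]$ and that $\U$ (resp.\ $\V$) shifts $\gr_U$ (resp.\ $\gr_V$) by exactly $-2$, so every fixed bidegree of $C_{\mathrm{conn}}$ is a finite-dimensional $\F$-vector space; an injective grading-preserving $\F$-endomorphism of such is automatically onto. Hence $\phi|_{C_{\mathrm{conn}}}$ is an automorphism of $C_{\mathrm{conn}}$.

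Finally, unwinding the definitions gives $\phi|_{C_{\mathrm{conn}}} = \beta\circ\alpha\circ\tau$: for $y\in C_{\mathrm{conn}}$ we have $f(y)=\tau(y)\in C_{\mathrm{conn}}$, then $f'(F(\tau(y)))=\alpha(\tau(y))\in C'_{\mathrm{conn}}$, then $f(G(\alpha(\tau(y))))=\beta(\alpha(\tau(y)))$. Since $\phi|_{C_{\mathrm{conn}}}$ and $\tau$ are automorphisms, $\beta\circ\alpha$ is an automorphism of $C_{\mathrm{conn}}$; running the same analysis with the roles of $(C,F)$ and $(C',G)$ interchanged, i.e.\ with $\phi' := f'\circ F\circ f\circ G\circ f'$ in place of $\phi$, yields that $\alpha\circ\beta$ is an automorphism of $C'_{\mathrm{conn}}$, which finishes the proof. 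The only real subtlety, and the point I would be most careful about, is exactly this last bookkeeping: a maximal self-local equivalence need not restrict to the identity on its image, only to an automorphism, which is why the corrective factor $\tau$ appears and why one must argue separately that $\phi|_{C_{\mathrm{conn}}}$ is surjective and not merely injective; the finiteness-per-bidegree observation (equivalently, noting that $\im\phi$ is itself a connected complex of $C$ and invoking Lemma~\ref{le:max} once more) disposes of this, and everything else is formal.
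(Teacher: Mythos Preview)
Your proof is correct and follows essentially the same approach as the paper: compose the local equivalences $F,G$ with the maximal self-local equivalences, use maximality to force a kernel equality, deduce injectivity of the cross-maps between $\im f$ and $\im f'$, and finish with a finiteness argument. The only difference is that you work with the five-term composite $\phi=f\circ G\circ f'\circ F\circ f$, which forces you to track the extra automorphism $\tau=f|_{\im f}$, whereas the paper uses the four-term composite $G\circ f'\circ F\circ f$ and shows directly that $f'\circ F|_{\im f}$ is injective (if $x=f(y)$ and $f'\circ F(x)=0$ then $y\in\ker(G\circ f'\circ F\circ f)=\ker f$, so $x=0$); this avoids the $\tau$ detour but is otherwise the same argument.
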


\begin{proof}
Since $C$ is local equivalent to $C'$,  there exists bigrading preserving, $\F[\U,\V]$-equivariant chain maps
\begin{align*}
&F\co C \longrightarrow C' \\
&G\co C' \longrightarrow C
\end{align*}
such that $F$ and $G$ induce isomorphisms on $(\U\V)^{-1}H_*$.

Suppose $f$ and $g$ are maximal self-local equivalence of $C$ and $C'$ respectively. Then $ f \circ G \circ g \circ F$ is a self-local equivalence of $C$. So ker $ G \circ g \circ F \circ f \subset$ ker $f$. But on the other hand,  ker $f \subset $ ker $ G \circ g \circ F \circ f$. We have ker $ G \circ g \circ F \circ f =$ ker $f$. In particular, $g \circ F|_{\mathrm{im} f}\co \mathrm{im} f \rightarrow \mathrm{im} g$ is injective. Parallelly,  $f \circ G|_{\mathrm{im} g}\co \mathrm{im} g \rightarrow \mathrm{im} f$ is injective.
Since both im $g$ and im $f$ are finitely generated, $g \circ F|_{\mathrm{im} f}$ and $f \circ G|_{\mathrm{im} g}$ are isomorhisms of chain complexes.
\end{proof}

\section{Linear Independence of $(Y_n,K_n)$}

In this section, we prove the linear independence of the previously constructed pairs $(Y_n,K_n)$  in the group $\CZhat / \CZ$ using connected knot complex. Recall the definition of pairs $(Y_n,K_n)$: For $n>1$, let $M_n$ denote $+1$-surgery on $T_{2,4n-1}$, the $(2,4n-1)$-torus knot, let $Y_n=M_n\conn -M_n$, and let $K_n\subset Y_n$ denote the connected sum of the core of the surgery in $M_n$ with the unknot in $-M_n$. We remind the reader we use $\simeq$ to denote  local equivalence, and since throughout the section the computation is carried out in the group $\J$, we will use additive notation for tensor product of chain complexes over the ring $\F[\U,\V]$.

 Let $C_n$ denote the connected knot complex of $\CFKUV(-Y_n,-K_n)$, and $C_n^{*}$  the connected knot complex of  $\CFKUV(Y_n,K_n)$. Recall $C_n^*$ is isomorphic to $ (C_n)^*$ as a chain complex.
 
 We recapitulate the properties regarding pairs $(Y_n,K_n)$ that we have studied over the preivous sections (See Theorem \ref{thm:Lspace}), using the language of the chosen ring $\F[\U,\V]$ for the knot Floer complex.
\begin{proposition} \label{prop:rephrase}
The connected knot complex $C_n$, where $n\geqslant 1$, is generated by $x_0, x_1$ and $y_1$ with bigradings 
\begin{align*}
&\gr (x_0)=(-2,0) \\
& \gr (x_1)=(0,-2) \\ 
&\gr (y_1)=(-2n+1,-2n+1).
\end{align*}
and the differential
$$\d y_1 = \U^{n-1}\V^{n}x_0 +\U^{n}\V^{n-1}x_1. $$
The connected knot complex $C_n^*$, where $n\geqslant 1$, is generated by $x_0^*, x_1^*$ and $y_1^*$ with bigradings 
\begin{align*}
&\gr (x_0^*)=(2,0) \\
& \gr (x_1^*)=(0,2) \\
&\gr (y_1^*)=(2n-1,2n-1).
\end{align*}
and the differentials
$$\d \U x_0^* = \d \V x_1^* = \U^n\V^ny_1^*.$$
Morever, for $n\geqslant 2$, we also have
\begin{enumerate}
\item{$\varep(C_n)=\varep(C_n^*)=0;$}
\item{$\tau(C_n)=1,\quad \tau(C_n^*)=-1.$}
\end{enumerate}
\end{proposition}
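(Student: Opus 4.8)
The plan is to reduce the proposition to Theorem~\ref{thm:Lspace} (applied over $\F[\U,\V]$) together with the local‑equivalence invariance of the connected complex, Proposition~\ref{pro:acyc2}. The starting point is to write the pair as a connected sum, $(Y_n,K_n)=(M_n,\tilde T_{2,4n-1})\conn(-M_n,O)$ with $O$ the unknot, so that by the K\"unneth principle for knot Floer homology $\CFKUV(Y_n,K_n)\simeq\CFKUV(M_n,\tilde T_{2,4n-1})\otimes_{\F[\U,\V]}\CFKUV(-M_n,O)$. Since $C_n^{*}\cong(C_n)^{*}$ (recorded just before the statement), it is enough to identify $C_n^{*}=\CFKUV(Y_n,K_n)_{\mathrm{conn}}$ and then dualize to get $C_n=(C_n^{*})^{*}$.

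First I would remove the unknot summand. The Alexander grading on $\CFKUV(Y',O)$ vanishes identically, so every differential arrow carries a coefficient of the form $(\U\V)^{k}$; hence $\CFKUV(Y',O)\cong\CFm(Y')\otimes_{\F[U]}\F[\U,\V]$ with $U$ acting as $\U\V$. Because $\F[U]$ is a PID and $Y'$ is an integer homology sphere, $\CFm(Y')$ is the direct sum of a free rank‑one summand (a shifted copy of $\F[U]$) and a $U$‑torsion complex; therefore $\CFKUV(\pm M_n,O)$ is locally equivalent to a single generator, placed in a diagonal bigrading depending only on $d(\pm M_n)$. Tensoring a complex with a (shifted) single generator only shifts its connected complex by a diagonal amount, so $\CFKUV(Y_n,K_n)_{\mathrm{conn}}$ coincides with $\CFKUV(M_n,\tilde T_{2,4n-1})_{\mathrm{conn}}$ up to an overall diagonal grading shift.

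Next I would compute $\CFKUV(M_n,\tilde T_{2,4n-1})_{\mathrm{conn}}$. The knot $T_{2,4n-1}$ is an L-space knot whose staircase has $4n-1=2(2n-1)+1$ generators, so $m=2n-1$ is odd, and $n(T_{2,4n-1})=n$. Running the Hedden--Levine filtered mapping cone of Section~\ref{sec:T35} over $\F[\U,\V]$ — the mapping cone and the reduction procedure there both make sense over $\F[\U,\V]$ verbatim — Theorem~\ref{thm:Lspace} gives that $\CFKUV(M_n,\tilde T_{2,4n-1})$ is filtered chain homotopy equivalent to $S\oplus A$, where $A$ is locally trivial and $S$ is the three‑generator complex in which two ``corner'' generators are each joined to one common generator by arrows of $(\U,\V)$‑bidegree $(n-1,n)$ and $(n,n-1)$. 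A direct check on bigradings shows that the only bigrading‑preserving chain endomorphism of $S$ is the identity, so $S=S_{\mathrm{conn}}$, and hence $C_n^{*}=S$ and $C_n=(C_n^{*})^{*}=S^{*}$, each up to a diagonal grading shift. The remaining ambiguity — the absolute $(\gr_U,\gr_V)$‑bigradings — is pinned down by tracking the Maslov gradings through the reduction of Section~\ref{sec:T35} (tabulated for the model case $T_{3,5}$ in Table~\ref{tab:CFKiKreduce}) and through the connected‑sum formula, the $\CFKUV(\mp M_n,O)$‑factor contributing the diagonal shift determined by $d(\mp M_n)$; one then reads off exactly the bigradings and differentials displayed in the statement (and, as a consistency check, the outcome respects $d(Y_n)=0$, since $Y_n=M_n\conn -M_n$ bounds a homology ball).

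Finally, $\tau$ and $\varep$ are determined by the connected complex, being invariant under addition of locally trivial summands, so $\tau(C_n^{*})=\tau(Y_n,K_n)$, $\varep(C_n^{*})=\varep(Y_n,K_n)$ and $\tau(C_n)=\tau(-Y_n,-K_n)$, $\varep(C_n)=\varep(-Y_n,-K_n)$. For $n\geq2$ the values $\tau(Y_n,K_n)=-1$ and $\varep(Y_n,K_n)=0$ were established in the proof of Proposition~\ref{prop:kn}, and Proposition~\ref{prop:taueppro} then gives $\tau(-Y_n,-K_n)=-\tau(Y_n,K_n)=1$ and $\varep(-Y_n,-K_n)=-\varep(Y_n,K_n)=0$; alternatively both invariants can be read off the three‑generator complexes $C_n$, $C_n^{*}$ directly. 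The step I expect to be the main obstacle is the bigrading bookkeeping in the previous paragraph: converting the $(i,j)$‑filtered $\F[U,U^{-1}]$ output of Theorem~\ref{thm:Lspace} into an absolutely bigraded $\F[\U,\V]$ statement and controlling the diagonal shift introduced by the unknot summand. Everything else is formal, given Section~\ref{sec:T35} and Proposition~\ref{pro:acyc2}.
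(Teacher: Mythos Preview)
Your proposal is correct and is essentially what the paper does, only in far more detail: the paper does not give a separate proof of this proposition at all, merely introducing it with ``We recapitulate the properties regarding pairs $(Y_n,K_n)$ that we have studied over the previous sections (see Theorem~\ref{thm:Lspace}), using the language of the chosen ring $\F[\U,\V]$.'' Your argument --- split off the $(-M_n,O)$ factor via K\"unneth and local equivalence, invoke Theorem~\ref{thm:Lspace} for the three-generator complex, observe that the only bigrading-preserving self-local equivalence of that complex is the identity, and then read off $\tau$ and $\varep$ from Proposition~\ref{prop:kn}(\ref{item:kntau}),(\ref{item:knep}) and Proposition~\ref{prop:taueppro} --- is exactly the intended translation, with the bigrading bookkeeping you flag being the only place requiring care.
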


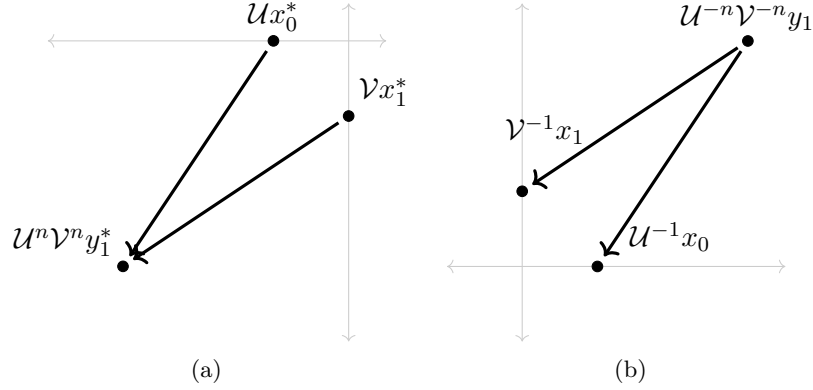
\begin{figure}
\subfigure[]{
\begin{tikzpicture}

	\begin{scope}[thin, black!20!white]
		\draw [<->] (-3.5, 0.5) -- (1, 0.5);
		\draw [<->] (0.5, -3.5) -- (0.5, 1);
	\end{scope}

	\filldraw (-2.5, -2.5) circle (2pt) node[] (a){};
	\filldraw (0.5, -0.5) circle (2pt) node[] (b){};
	\filldraw (-0.5, 0.5) circle (2pt) node[] (c){};	

	\draw [very thick, ->] (b) -- (a);
	\draw [very thick, ->] (c) -- (a);

	\node [above] at (c) {$\U x_0^*$};
	\node [above right] at (b) {$\V x_1^*$};
	\node [above left] at (a) {$\U^n\V^ny_1^*$};

\label{subfig:cn}
\end{tikzpicture}

}
\subfigure[]{
\begin{tikzpicture}

	\begin{scope}[thin, black!20!white]
		\draw [<->] (-0.5, 0.5) -- (4, 0.5);
		\draw [<->] (0.5, -0.5) -- (0.5, 4);
	\end{scope}

	\filldraw (0.5, 1.5) circle (2pt) node[] (a){};
	\filldraw (1.5, 0.5) circle (2pt) node[] (b){};
	\filldraw (3.5, 3.5) circle (2pt) node[] (c){};	

	\draw [very thick, ->] (c) -- (a);
	\draw [very thick, ->] (c) -- (b);

	\node [above] at (c) {$\U^{-n}\V^{-n}y_1$};
	\node [ right] at (1.8,0.9) {$\U^{-1}x_0$};
	\node [above] at (0.8,2) {$\V^{-1}x_1$};

\label{subfig:cn*}
\end{tikzpicture}

}
\label{fig:complex}
\caption{Figure \ref{subfig:cn} shows $C_n^*$.  Figure \ref{subfig:cn*} shows $C_n$ in the preimage of $\U^n\V^n$. }
\end{figure}

Since the complex $C_1$ and  $C_1^*$ are isomorphic to the knot complex of right and left handed trefoil respectively, for the rest of the section we will only consider  $C_n$ and  $C_n^*$ with $n \geqslant 2$.
Our goal is to prove the following statement, which will in turn finish the proof of Proposition \ref{prop:kn}.
\begin{proposition}\label{prop:linear}
If there exist nonnegative integers $p_n,q_n, N$ , such that 
\begin{equation}\label{equ:1}
\sum_{n=2}^{N}(p_n C_n + q_n C_n^{*}) \simeq C
\end{equation}
 for some knot complex $C$ of a knot in $S^{3}$, then $p_n =q_n $ for all $n \geqslant 2$.  
\end{proposition}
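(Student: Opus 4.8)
The plan is to reduce the hypothesis to an assertion about the connected complex via Proposition \ref{pro:acyc2}, use the explicit descriptions of $C_n,C_n^*$ in Proposition \ref{prop:rephrase}, and then play the structure of $\bigotimes_n C_n^{\otimes p_n}\otimes (C_n^*)^{\otimes q_n}$ against the obstruction supplied by Proposition \ref{prop:tauep}. First, since $C_n^*$ is the inverse of $C_n$ in $\J$, we have $C_n\oplus C_n^*\simeq\F[\U,\V]$; writing $r_n=p_n-q_n$ and letting $D_n$ denote $C_n^{\otimes r_n}$ when $r_n\geqslant 0$ and $(C_n^*)^{\otimes(-r_n)}$ when $r_n<0$, equation \eqref{equ:1} becomes $\bigotimes_{n=2}^N D_n\simeq C$, and it suffices to prove $r_n=0$ for all $n$. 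Dualizing \eqref{equ:1} (the dual of $C$ is $\CFKUV$ of the mirror of a knot in $S^3$, hence again of the required form) interchanges $p_n$ and $q_n$, so I may assume that the largest index $m$ with $r_m\neq 0$, if it exists, has $r_m>0$.

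Next I would record the constraint from $\tau$ and $\varep$. By Proposition \ref{prop:rephrase} we have $\varep(C_n)=\varep(C_n^*)=0$ for $n\geqslant 2$, so repeated use of Proposition \ref{prop:taueppro}\eqref{it:epadd2} gives $\varep(\bigotimes_n D_n)=0$, whence $\varep(C)=0$; as $C$ is a knot complex in $S^3$, Proposition \ref{prop:tauep} forces $\tau(C)=0$, and additivity of $\tau$ together with $\tau(C_n)=1=-\tau(C_n^*)$ yields $\sum_{n=2}^N r_n=\tau(C)=0$. This is one linear relation among the $r_n$; the remaining relations must come from the finer invariant $C_{\mathrm{conn}}$.

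The core of the proof is the computation of $\bigl(\bigotimes_n D_n\bigr)_{\mathrm{conn}}$, which by Proposition \ref{pro:acyc2} is isomorphic to $C_{\mathrm{conn}}$. On one side, $C_{\mathrm{conn}}$ is a direct summand of $\CFKUV(S^3,K)$, so $H_*\bigl(C_{\mathrm{conn}}/\V\cdot C_{\mathrm{conn}}\bigr)$ has $\F[\U]$-free rank at most one; combined with $\varep(C)=0$ this pins $C_{\mathrm{conn}}$ down tightly, and in particular it cannot contain a genuine ``box'' summand. On the other side, using the three-generator presentations of $C_n$ and $C_n^*$ from Proposition \ref{prop:rephrase}, one computes $\bigl(\bigotimes_n D_n\bigr)_{\mathrm{conn}}$ explicitly by induction on $\sum_n\lvert r_n\rvert$ — absorbing $C_n\otimes C_n^*$ pairs into the trivial complex, handling $C_n\otimes C_n$, $C_n\otimes C_m$ and $C_n\otimes C_m^*$ one factor at a time, and splitting off acyclic summands after each step — and finds that it is locally trivial exactly when every $r_n=0$; when $m$ exists, the reduced differential still carries a term whose $\U,\V$-exponents are governed by $m$ and $r_m$, so the complex is nontrivial, contradicting the previous sentence. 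Hence all $r_n=0$, i.e.\ $p_n=q_n$ for every $n\geqslant 2$, which also completes the proof of Proposition \ref{prop:kn}. The main obstacle is precisely this explicit computation: controlling the connected complex of an arbitrary tensor product of the $C_n$'s and verifying that the ``top-scale'' datum $(m,r_m)$ genuinely survives into $C_{\mathrm{conn}}$ rather than being cancelled by the lower-scale factors; a downward induction on $m$, peeling off one scale at a time, is the natural bookkeeping device, and the $\tau$-relation of the second paragraph is what gets it started.
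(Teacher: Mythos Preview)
Your first two paragraphs track the paper's reduction almost exactly: cancel $C_n+C_n^*$, use $\varep=0$ to force $\tau(C)=0$, and obtain the single linear relation $\sum r_n=0$. That part is fine.

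The gap is your third paragraph, which is the entire content of the proposition. You assert that one can compute $(\bigotimes_n D_n)_{\mathrm{conn}}$ ``by induction on $\sum_n|r_n|$ \dots\ handling $C_n\otimes C_n$, $C_n\otimes C_m$ and $C_n\otimes C_m^*$ one factor at a time, and splitting off acyclic summands after each step,'' and that the top scale $m$ survives. But you never do this computation, and in fact the connected complex does not interact well with tensor products: $(A\otimes B)_{\mathrm{conn}}$ is not in general determined by $A_{\mathrm{conn}}$ and $B_{\mathrm{conn}}$, so ``one factor at a time'' is not a sound bookkeeping device. Your own last sentence concedes that verifying survival of the top-scale datum is ``the main obstacle''; as written, the proposal is a plan, not a proof.

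The paper's argument is structurally different from what you outline. It does \emph{not} try to compute $C'_{\mathrm{conn}}$; instead it works in the full tensor product $C'=\sum C_{n_i}+\sum C_{m_i}^*$ and exhibits two explicit cycles $\alpha,\beta$ generating $(\U\V)^{-1}H_*(C')$ that lie in \emph{different} bigradings. Any maximal self-local equivalence $f$ preserves bigrading and sends generators of $(\U\V)^{-1}H_*$ to generators, so $f(\alpha)$ and $f(\beta)$ are distinct nonzero elements of $C'_{\mathrm{conn}}$; since $C'$ has no vertical arrows, both survive to $H_{\mathrm{vert}}(C'_{\mathrm{conn}})$, contradicting the fact that this group must be one-dimensional for a knot in $S^3$. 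The hard technical input is an explicit basis for $\sum C_{n_i}$ exhibiting a ``saw-edge'' subquotient $\C(k,n)$ (Proposition~\ref{prop:cnsum} and Lemma~\ref{le:kcn}), built with tooth size equal to the \emph{smallest} index $n=n_1$, not the largest. This is exactly the computation your proposal defers.
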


Note that $C_n+C_n^*\simeq \F[\U,\V]$. When not all $p_n=q_n$ we can rewrite Equation (\ref{equ:1}) as
$$ \sum_{i=1}^{k} C_{n_i} + \sum_{i=1}^{l}  C_{m_i}^{*} \simeq C $$

where $2\leqslant n= n_1\leqslant .. \leqslant n_{k}$ , $2\leqslant m=m_1\leqslant .. \leqslant m_{l}$. Without the loss of generality we are assuming $n > m$. A straightforward computation shows that
\begin{align*}
\varep (C) =& \varep \big( \sum_{i=1}^{k} C_{n_i} + \sum_{i=1}^{l} C_{m_i}^{*}\big)=0 \\
\tau (C) =& \tau \big( \sum_{i=1}^{k} C_{n_i} + \sum_{i=1}^{l}  C_{m_i}^{*}\big)=k-l
 \end{align*}
 If $l\neq k$ this contradicts the fact that $C$ is the knot complex of a knot in $S^{3}$. Hence we care only about the case

\begin{equation}\label{equ:linear}
\sum_{i=1}^{k} C_{n_i} + \sum_{i=1}^{k}  C_{m_i}^{*} \simeq C 
\end{equation}
where $2\leqslant n= n_1\leqslant .. \leqslant n_{k}$ , $2\leqslant m=m_1\leqslant .. \leqslant m_{k}$ and $n > m$.

For a chain complex $C$ over $\F[\U,\V]$, let $C_{\mathrm{vert}} := C / (\U=0) \otimes \F[\V,\V^{-1}] $. Define the {\em vertical homology} $H_{\mathrm{vert}}$ to be $H_*(C_{\mathrm{vert}})$.
\begin{proposition}
If $C$ is the knot complex of a knot in $S^{3}$, then $H_{\mathrm{vert}}(C)$ is one-dimensional. If $C$ is locally equivalent to a knot complex of a knot in $S^3$, then  $H_{\mathrm{vert}}(C_{\mathrm{conn}})$ is one-dimensional.
\end{proposition}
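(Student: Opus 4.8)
My plan is to show that, for a genuine knot complex of a knot in $S^3$, $H_{\mathrm{vert}}$ is just the $\F[\V,\V^{-1}]$--linearized Heegaard Floer homology of $S^3$, and then that the acyclic summand split off by $C\cong C_{\mathrm{conn}}\oplus A$ is, after inverting $\V$, forced to carry homology of \emph{even} rank, so that $C_{\mathrm{conn}}$ must absorb the entire one-dimensional answer.

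\emph{First statement.} Fix $C=\CFKUV(S^3,K)$. The complex $C/(\U=0)$ is freely generated over $\F[\V]$ by the intersection points of a doubly-pointed Heegaard diagram for $(S^3,K)$, with differential counting only disks $\phi$ with $n_w(\phi)=0$; under the dictionary between the $\F[\U,\V]$--model and the $(i,j)$--filtered model of the knot complex, it is the ``vertical'' complex $C\{i=0\}$ with its Alexander filtration recorded by powers of $\V$. Filtering $C_{\mathrm{vert}}=C/(\U=0)\otimes\F[\V,\V^{-1}]$ by the Alexander grading therefore produces a spectral sequence with $E_1$--page $\HFKh(S^3,K)\otimes_\F\F[\V,\V^{-1}]$ which is (the $\F[\V,\V^{-1}]$--linear extension of) the standard spectral sequence from knot Floer homology to $\HFh(S^3)$; its $E_\infty$--page, hence $H_{\mathrm{vert}}(C)$, is $\HFh(S^3)\otimes_\F\F[\V,\V^{-1}]\cong\F[\V,\V^{-1}]$, which is one-dimensional. (As a check: for the right-handed trefoil $\d x=\U y+\V z$ with $y,z$ cycles, so $C/(\U=0)$ has $\d x=\V z$ and $H_*(C/(\U=0))\cong\F[\V]\oplus\F[\V]/(\V)$, which becomes $\F[\V,\V^{-1}]$ after inverting $\V$.)

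\emph{Second statement.} Suppose $C\simeq\CFKUV(S^3,K')$ for a knot $K'\subset S^3$. The proof of Proposition~\ref{pro:acyc2} uses only that the two complexes are finitely generated and locally equivalent, so it gives $C_{\mathrm{conn}}\cong\CFKUV(S^3,K')_{\mathrm{conn}}$; combining with Proposition~\ref{pro:acyc1} we obtain a splitting $\CFKUV(S^3,K')\cong C_{\mathrm{conn}}\oplus A'$ with $A'\otimes_{\F[\U,\V]}\F[\U,\V,(\U\V)^{-1}]$ acyclic. Tensoring with $\F[\V,\V^{-1}]$ and invoking the first statement,
\[
\F[\V,\V^{-1}]\;\cong\;H_{\mathrm{vert}}\big(\CFKUV(S^3,K')\big)\;\cong\;H_{\mathrm{vert}}(C_{\mathrm{conn}})\oplus H_{\mathrm{vert}}(A').
\]
Since $\F[\V,\V^{-1}]$ is a principal ideal domain, the direct summand $H_{\mathrm{vert}}(C_{\mathrm{conn}})$ of this free rank-one module is free of rank $0$ or $1$, so it remains only to exclude rank $0$. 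For that I will use an Euler characteristic count: working with the Maslov grading modulo $2$ (in which $\F[\U,\V]$, $\F[\U,\V,(\U\V)^{-1}]$ and $\F[\V,\V^{-1}]$ are all concentrated in even degree), the $\Z/2$--graded Euler characteristic of $A'$ is preserved under base change; computed over $\F[\U,\V,(\U\V)^{-1}]$ it vanishes because $A'$ is acyclic there, while computed over $\F[\V,\V^{-1}]$ it equals the difference of the $\F[\V,\V^{-1}]$--ranks of the even-- and odd--degree parts of $H_{\mathrm{vert}}(A')$. Hence $H_{\mathrm{vert}}(A')$ has even total $\F[\V,\V^{-1}]$--rank, and the displayed decomposition then forces $\operatorname{rk}_{\F[\V,\V^{-1}]}H_{\mathrm{vert}}(C_{\mathrm{conn}})=1$; that is, $H_{\mathrm{vert}}(C_{\mathrm{conn}})$ is one-dimensional.

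\emph{The main obstacle.} The delicate point is that local equivalence and the passage $C\mapsto C_{\mathrm{conn}}$ are designed to control only the doubly localized homology $(\U\V)^{-1}H_*$; they do not manifestly preserve $H_{\mathrm{vert}}$, and a priori the $(\U\V)$--acyclic summand $A'$ could carry nonzero --- even one-dimensional --- vertical homology. The parity observation is precisely what rules this out: such a summand can contribute only an even-dimensional piece, so the one-dimensional vertical homology of the ambient knot complex of a knot in $S^3$ must be inherited by $C_{\mathrm{conn}}$. The only other thing that needs care, and it is routine, is the identification in the first paragraph of $C/(\U=0)\otimes\F[\V,\V^{-1}]$ with $\HFh(S^3)\otimes\F[\V,\V^{-1}]$, i.e.\ matching the $\F[\U,\V]$--model of the knot complex with the $(i,j)$--filtered model.
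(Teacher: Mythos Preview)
Your argument is correct and follows the same outline as the paper's: both identify $H_{\mathrm{vert}}(\CFKUV(S^3,K))$ with $\HFh(S^3)$ tensored up to $\F[\V,\V^{-1}]$, and both handle the second statement by invoking Proposition~\ref{pro:acyc2} (so $C_{\mathrm{conn}}$ is isomorphic to the connected complex of an actual knot in $S^3$) together with Proposition~\ref{pro:acyc1} (so it sits as a summand of that knot's full complex). The paper's proof is a two-line sketch that stops at ``So $H_{\mathrm{vert}}(C_{\mathrm{conn}})$ is one-dimensional,'' leaving implicit exactly the point you flag as the main obstacle --- why the $(\U\V)$--acyclic complement $A'$ cannot absorb the single generator of vertical homology. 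Your Euler-characteristic parity argument fills this gap cleanly; it is an addition to, rather than a departure from, the paper's approach.
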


\begin{proof}
Given a knot complex $C$ for a knot in $S^{3}$,  $C / (\U=0) \otimes \F[\V,\V^{-1}] $ recovers $\CFh(S^3)$, whose homology is one-dimensional.
If $C$ is locally equivalent to a knot complex of a knot in $S^3$, then $C_{\mathrm{conn}}$ is isomorphic to a connected knot complex of a knot in $S^3$. So $H_{\mathrm{vert}}(C_{\mathrm{conn}})$ is one-dimensional.
\end{proof}

The strategy of our proof for Proposition \ref{prop:linear} is to show Equation (\ref{equ:linear}) never holds true by comparing the connected complex on both sides. Let $C'=\sum_{i=1}^{k} C_{n_i} + \sum_{i=1}^{k} C_{m_i}^{*}$.  By the way of contradiction, if Equation (\ref{equ:linear}) holds true, it would imply that $C'_{\mathrm{conn}}$ is isomorphic to $C_{\mathrm{conn}}$ as a chain complex. In particular, $H_{\mathrm{vert}}(C'_{\mathrm{conn}})=H_{\mathrm{vert}}(C_{\mathrm{conn}})$. Hence it suffices to prove dim $(H_{\mathrm{vert}}(C'_{\mathrm{conn}})) >1$.

\begin{figure}
\begin{tikzpicture}

	\begin{scope}[thin, black!20!white]
		\draw [<->] (-7, 0.5) -- (1, 0.5);
		\draw [<->] (0.5, -7) -- (0.5, 1);
	\end{scope}
	\draw[step=1, black!50!white, very thin] (-6.9, -6.9) grid (0.9, 0.9);

	\filldraw  (-3.3, 0.7) circle (2pt) node[] (a){};
	\filldraw (-2.3, -0.3) circle (2pt) node[] (b){};
	\filldraw (-1.3, -1.3) circle (2pt) node[] (c){};	
  \filldraw (-0.3, -2.3) circle (2pt) node[] (d){};	
 	\filldraw (0.7, -3.3) circle (2pt) node[] (e){};	
  \filldraw (-4.3, -1.3) circle (2pt) node[] (f){};
  \filldraw (-3.3, -2.3) circle (2pt) node[] (g){};
   \filldraw (-2.3, -3.3) circle (2pt) node[] (h){};
   \filldraw (-1.3, -4.3) circle (2pt) node[] (i){};

\filldraw[gray!60] (-2.7, -0.7) circle (2pt) node[] (j){};
\filldraw[gray!60] (-1.7, -1.7) circle (2pt) node[] (k){};
\filldraw[gray!60] (-0.7, -2.7) circle (2pt) node[] (l){};
\filldraw[gray!60] (-3.7, -2.7) circle (2pt) node[] (m){};
\filldraw[gray!60] (-2.7, -3.7) circle (2pt) node[] (n){};
\filldraw[gray!60] (-5.5, -2.5) circle (2pt) node[] (o){};
\filldraw[gray!60] (-4.5, -3.5) circle (2pt) node[] (p){};
\filldraw[gray!60] (-3.5, -4.5) circle (2pt) node[] (q){};
\filldraw[gray!60] (-2.5, -5.5) circle (2pt) node[] (r){};
\filldraw[gray!60] (-6.5, -4.5) circle (2pt) node[] (s){};
\filldraw[gray!60] (-5.5, -5.5) circle (2pt) node[] (t){};
\filldraw[gray!60] (-4.5, -6.5) circle (2pt) node[] (u){};

	\draw [very thick, -] (a) -- (f);
	\draw [very thick, -] (b) -- (f);
\draw [very thick, -] (b) -- (g);
\draw [very thick, -] (c) -- (g);
\draw [very thick, -] (c) -- (h);
\draw [very thick, -] (d) -- (h);
\draw [very thick, -] (d) -- (i);
\draw [very thick, -] (e) -- (i);

\draw [ thick, -] (j) -- (m);
\draw [ thick, -] (k) -- (m);
\draw [ thick, -] (k) -- (n);
\draw [ thick, -] (l) -- (n);
\draw [ thick, -] (j) -- (o);
\draw [ thick, -] (k) -- (p);
\draw [ thick, -] (l) -- (q);
\draw [ thick, -] (l) -- (r);
\draw [ thick, -] (m) -- (s);
\draw [ thick, -] (n) -- (t);
\draw [ thick, -] (o) -- (s);
\draw [ thick, -] (p) -- (s);
\draw [ thick, -] (p) -- (t);
\draw [ thick, -] (q) -- (t);
\draw [ thick, -] (q) -- (u);
\draw [ thick, -] (r) -- (u);
\draw [ thick, -] (l) -- (i);

\end{tikzpicture}

\label{fig:2223*}
\caption{Complex $3C^*_2 +C^*_3$, which contains the inverse saw-edge $\C^*(4,2)$ as a subcomplex (generators of $\C^*(4,2)$ are depicted in darker color).   }
\end{figure}
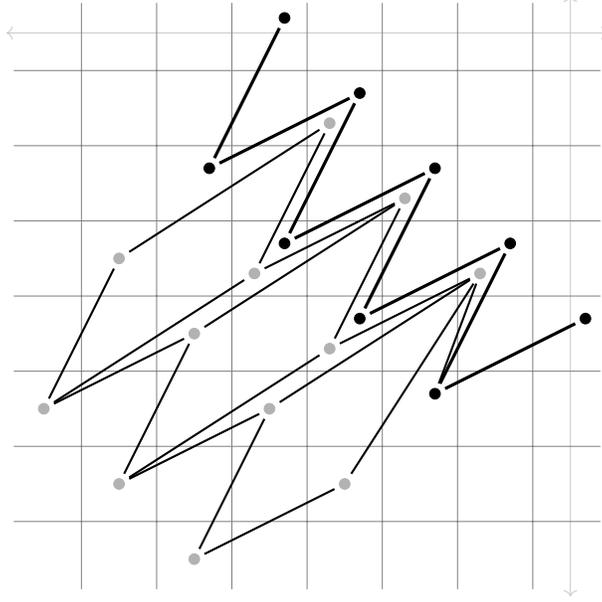

As an effort to discuss the objects at hand precisely, we introduce the following definitions.

\begin{definition}
Define a {\em saw-edge} $\C(k,n)$ of length $k$ with {\em tooth size} $n$, where $k \geqslant 1$ and $n\geqslant 2$,  to be a chain complex over $\F[\U,\V]$, generated by $x_i^{(k)},  i=0,...,k$ and $y_i^{(k)},  i=1,...,k.$

Bigradings are given by
\begin{align*}
&\gr(x^{(k)}_i)=(-2(k-i),-2i)\\ 
&\gr(y^{(k)}_i)=(-2(n+k-1)+1,-2(n-1+i)+1).
 \end{align*}
 while the differentials are 
$$\d y^{(k)}_i=\U^{n}\V^{n-1}x^{(k)}_i+\U^{n-1}\V^n x^{(k)}_{i-1}. \qquad \qquad \qquad$$

Define an {\em inverse saw-edge} $\C^*(k,n)$ of length $k$ with {\em tooth size} $n$ , where $k \geqslant 1$ and $n\geqslant 2$,  to be simply the dual complex of the saw-edge  $\C(k,n)$. Dualizing the generators of $\C(k,n)$, the inverse saw-edge $\C^*(k,n)$ is generated by 
 $x_i^{(k),*},  i=0,...,k$ and $y_i^{(k),*},  i=1,...,k.$

 Bigradings are given by
 \begin{align*}
  &\gr(x^{(k),*}_i)=(2(k-i),2i)\\
  &  \gr(y^{(k),*}_i)=(2(n+k-1)-1,2(n-1+i)-1).
  \end{align*}
  while the differentials are 
\begin{equation*}
\d x^{(k),*}_i = 
\begin{cases}
\U^{n-1}\V^ny^{(k),*}_1, & \quad i=0, \\
\U^{n}\V^{n-1}y^{(k),*}_i + \U^{n-1}\V^{n}y^{(k),*}_{i+1}, & \quad i=1,...k-1,\\
\U^n\V^{n-1}y^{(k),*}_k & \quad i=k.
\end{cases}
\end{equation*}
\end{definition}

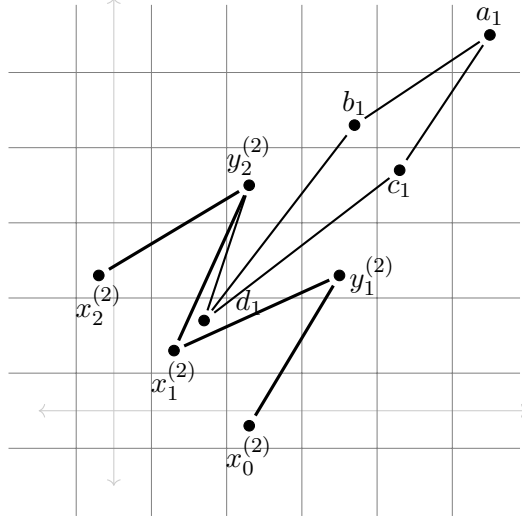
\begin{figure}
\begin{tikzpicture}

	\begin{scope}[thin, black!20!white]
		\draw [<->] (-0.5, 0.5) -- (6, 0.5);
		\draw [<->] (0.5, -0.5) -- (0.5, 6);
	\end{scope}
	\draw[step=1, black!50!white, very thin] (-0.9, -0.9) grid (5.9, 5.9);
	
	\filldraw (0.3, 2.3) circle (2pt) node[] (a){};
	\filldraw (1.3, 1.3) circle (2pt) node[] (b){};
	\filldraw (2.3, 0.3) circle (2pt) node[] (c){};
	\filldraw (2.3, 3.5) circle (2pt) node[] (d){};
	\filldraw (3.5, 2.3) circle (2pt) node[] (e){};
	
		\filldraw (1.7, 1.7) circle (2pt) node[] (f){};
	  \filldraw (3.7, 4.3) circle (2pt) node[] (g){};
		\filldraw (4.3, 3.7) circle (2pt) node[] (h){};
		\filldraw (5.5, 5.5) circle (2pt) node[] (i){};

	\draw [very thick, -] (d) -- (a);
	\draw [very thick, -] (d) -- (b);
	\draw [very thick, -] (e) -- (b);
	\draw [very thick, -] (e) -- (c);
	
	\draw [ thick, -] (g) -- (f);	
	\draw [ thick, -] (h) -- (f);
	\draw [ thick, -] (i) -- (g);	
	\draw [ thick, -] (i) -- (h);
	\draw [ thick, -] (d) -- (f);

\node [below] at (a) {$x^{(2)}_2$};
\node [below] at (b) {$x^{(2)}_1$};
\node [below] at (c) {$x^{(2)}_0$};
\node [above] at (d) {$y^{(2)}_2$};
\node [right] at (e) {$y^{(2)}_1$};
\node  at (2.3,1.94) {$d_1$};
\node [above] at (g) {$b_1$};
\node [below] at (h) {$c_1$};
\node [above] at (i) {$a_1$};

\end{tikzpicture}
\label{fig:23}
\caption{ Complex $C_2 + C_3$, where generators are labelled abstractly, missing their actual $\U,\V$ decorations.  Using the notations from Proposition \ref{prop:cnsum}, $a_1,b_1,c_1,d_1$ generate subcomplex $\D$, where $b_1$ plays the role of $w_2$. }
\end{figure}

Next,  we want to study some key properties of the complex $\sum_{i=1}^{k} C_{n_i}$. Roughly speaking, $\sum_{i=1}^{k} C_{n_i}$ consists of two parts, the saw-edge $\C(k,n)$ and a subcomplex we call $\D$. We can pick a basis for $\sum_{i=1}^{k} C_{n_i}$ over $\F[\U,\V]$, such that on the module level, $\sum_{i=1}^{k} C_{n_i}$ decomposites as a direct sum   $\C(k,n)\oplus \D$, where $\C(k,n)$ is generated by $x^{(k)}_i, i=0,...,k$ and $y^{(k)}_i, i=1,...,k$, whereas $\D$ is generated by $w_i , i=1,...,k$ and other generators. While as a chain complex, the saw-edge $\C(k,n)$ is no longer a direct summand, there are generators we denote by $w_i , i=1,...,k$ in $\D$, such that the image of $\d \big((\U\V)^{h_i}y^{(k)}_i + w_i \big)$ is inside $\C(k,n)$. More precisely, we have 
$\d \big((\U\V)^{h_i}y^{(k)}_i + w_i \big)= (\U\V)^{h_i} \big(\U^n\V^{n-1}x^{(k)}_i + \U^{n-1}\V^n x^{(k)}_{i-1}\big), i=1,...,k$ 
where $h_i$ is a nonnegative integer. We will slightly abuse the notation and write $\d \big(y^{(k)}_i + (\U\V)^{-h_i}w_i \big)=\U^n\V^{n-1}x^{(k)}_i + \U^{n-1}\V^n x^{(k)}_{i-1}, i=1,...,k$ instead.

\begin{proposition}\label{prop:cnsum}
There exists a basis for the complex $\sum_{i=1}^{k} C_{n_i}$, where $2\leqslant n=n_1\leqslant .. \leqslant n_{k}$, and a subcomplex $\D$, such that the following are satisfied:
\begin{enumerate}
\item{on the module level, $\sum_{i=1}^{k} C_{n_i}$ is isomorphic to $\C(k,n)\oplus \D$ as a module;}\label{it:1}
\item{$\sum_{i=1}^{k} C_{n_i}/\D$ is isomorphic to $\C(k,n)$ as a chain complex;}\label{it:2}
\item each {$x^{(k)}_i \in \C(k,n), i=0,...,k$ generates $(\U\V)^{-1}H_* \big(\sum_{i=1}^{k} C_{n_i} \big);$}\label{it:3}
\item{for each $y^{(k)}_i \in \C(k,n), i=1,...,k$, there exists some $w_i \in \D$, such that 
$$\d \big(y^{(k)}_i + (\U\V)^{-h_i}w_i \big)=\U^n\V^{n-1}x^{(k)}_i + \U^{n-1}\V^n x^{(k)}_{i-1},$$
where $h_i$ is a nonnegative integer, and $\d$ is the differential of the chain complex $\sum_{i=1}^{k} C_{n_i}$.}\label{it:4}
\end{enumerate}
\end{proposition}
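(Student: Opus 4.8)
The plan is to argue by induction on $k$, peeling off the smallest factor $C_{n_1}$ at each step. For $k=1$ there is nothing to do: $\sum_{i=1}^{1}C_{n_i}=C_{n_1}$ is literally $\C(1,n_1)$ (set $x^{(1)}_0=x_0,\ x^{(1)}_1=x_1,\ y^{(1)}_1=y_1$), with $\D=0$, $w_1=0$, $h_1=0$; then \eqref{it:1}, \eqref{it:2}, \eqref{it:4} are trivial and \eqref{it:3} holds because, localizing the description in Proposition \ref{prop:rephrase}, $\d y_1=(\U\V)^{n_1-1}(\V x_0+\U x_1)$, so each of $x_0,x_1$ generates the free rank‑one module $(\U\V)^{-1}H_*(C_{n_1})$. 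For the inductive step, write $\sum_{i=1}^{k}C_{n_i}=C_{n_1}\otimes_{\F[\U,\V]}M$ with $M=\sum_{i=2}^{k}C_{n_i}$, and denote by $a,b,c$ the generators $x_0,x_1,y_1$ of $C_{n_1}$, so $\d a=\d b=0$ and $\d c=\U^{n_1-1}\V^{n_1}a+\U^{n_1}\V^{n_1-1}b$. Since $n_2=\min\{n_2,\dots,n_k\}\geqslant n_1$, the inductive hypothesis gives $M$ a basis with saw‑edge part $\C(k-1,n_2)=\langle\bar x_0,\dots,\bar x_{k-1},\bar y_1,\dots,\bar y_{k-1}\rangle$ (the $\bar x_i$ genuine cycles) and complement $\D'$. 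I would then declare
\[
x^{(k)}_i:=\bar x_i\otimes a\ \ (0\leqslant i\leqslant k-1),\qquad x^{(k)}_k:=\bar x_{k-1}\otimes b,\qquad y^{(k)}_i:=\bar x_{i-1}\otimes c\ \ (1\leqslant i\leqslant k).
\]
The crucial observation is that, because $\d c$ carries the exponents $n_1$, each tooth $\d(\bar x_{i-1}\otimes c)=\U^{n_1-1}\V^{n_1}(\bar x_{i-1}\otimes a)+\U^{n_1}\V^{n_1-1}(\bar x_{i-1}\otimes b)$ automatically has tooth size $n=n_1$, independently of the larger sizes $n_2,\dots,n_k$.

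Next I would perform a unitriangular, hence invertible, $\F[\U,\V]$‑linear change of basis on the remaining tensor generators: replace $\bar x_j\otimes b$ by $g_j:=\bar x_j\otimes b+\bar x_{j+1}\otimes a$ for $0\leqslant j\leqslant k-2$ (so that in the quotient $\bar x_j\otimes b\equiv\bar x_{j+1}\otimes a$, splicing the teeth into one saw‑edge), and replace $\bar y_j\otimes a$, $\bar y_j\otimes b$ by $\bar y_j\otimes a+(\U\V)^{n_2-n_1}(\bar x_{j-1}\otimes c)$ and $\bar y_j\otimes b+(\U\V)^{n_2-n_1}(\bar x_j\otimes c)$, leaving $\bar y_j\otimes c$ and $\D'\otimes C_{n_1}$ untouched. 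Let $\D$ be spanned by the $g_j$, the modified $\bar y_j\otimes a$ and $\bar y_j\otimes b$, the $\bar y_j\otimes c$, and $\D'\otimes C_{n_1}$; then $\sum_{i=1}^{k}C_{n_i}=\C(k,n)\oplus\D$ on the module level, which is \eqref{it:1} (a rank count gives $3\cdot\mathrm{rank}\,M=(2k+1)+\mathrm{rank}\,\D$).

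The heart of the argument is then to verify that $\D$ is a subcomplex with quotient $\C(k,n)$. Since $\d a=\d b=0$ and the $\bar x_i$ are cycles, $\d x^{(k)}_i=0$, so the hypothesis that the saw‑edge $x$‑generators are cycles is preserved; and modulo $\D$, using $\bar x_{i-1}\otimes b\equiv x^{(k)}_i$ (as $g_{i-1}\in\D$), one gets $\d y^{(k)}_i\equiv\U^{n_1}\V^{n_1-1}x^{(k)}_i+\U^{n_1-1}\V^{n_1}x^{(k)}_{i-1}$, exactly the saw‑edge differential, giving \eqref{it:2} once $\D$ is closed under $\d$. Closure is the routine but lengthy computation: $\d g_j=0$; the $x^{(k)}$‑components of $\d(\bar y_j\otimes a)=(\d\bar y_j)\otimes a$ are cancelled precisely by the added term $(\U\V)^{n_2-n_1}\d(\bar x_{j-1}\otimes c)$, leaving only terms in $\D'\otimes C_{n_1}$ and a multiple of $g_{j-1}$; likewise for the modified $\bar y_j\otimes b$; and for $\bar y_j\otimes c$, after rewriting $\bar y_j\otimes a,\bar y_j\otimes b$ in the new basis, the $y^{(k)}$‑terms in $\d(\bar y_j\otimes c)$ cancel in pairs, using $\U^{n_1-1}\V^{n_1}(\U\V)^{n_2-n_1}=\U^{n_2-1}\V^{n_2}$ and $\U^{n_1}\V^{n_1-1}(\U\V)^{n_2-n_1}=\U^{n_2}\V^{n_2-1}$, so $\d(\bar y_j\otimes c)$ stays in $\D$. (The worked example $C_2+C_3$ already displays all of these cancellations.) Setting up this change of basis and bookkeeping the $(\U\V)$‑exponents is the main obstacle; once the generators are chosen as above it is entirely mechanical.

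For \eqref{it:3}, by the Künneth formula over $\F[\U,\V,(\U\V)^{-1}]$ we have $(\U\V)^{-1}H_*(C_{n_1}\otimes M)\cong(\U\V)^{-1}H_*(C_{n_1})\otimes(\U\V)^{-1}H_*(M)$; the first factor is free of rank one generated by the class of $a$ (equivalently $b$), and by induction $\bar x_i$ generates the second, so each $x^{(k)}_i$ generates $(\U\V)^{-1}H_*\big(\sum_{i=1}^{k}C_{n_i}\big)$. For \eqref{it:4}: for $i<k$ one has $\d y^{(k)}_i=\U^{n_1}\V^{n_1-1}x^{(k)}_i+\U^{n_1-1}\V^{n_1}x^{(k)}_{i-1}+\U^{n_1}\V^{n_1-1}g_{i-1}$, with error $\U^{n_1}\V^{n_1-1}g_{i-1}\in\D$, while for $i=k$ the error vanishes, so $w_k=0$, $h_k=0$. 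Applying $(\U\V)^{-1}H_*$ to the short exact sequence $0\to\D\to\sum_{i=1}^{k}C_{n_i}\to\C(k,n)\to 0$ (localization is flat), the induced map carries the generator $x^{(k)}_0$ to the saw‑edge generator $x^{(k)}_0$ and is therefore an isomorphism of free rank‑one modules, so $(\U\V)^{-1}H_*(\D)=0$; hence the cycle $g_{i-1}\in\D$ is a boundary in $\D$ after multiplying by a suitable $(\U\V)^{h_i}$, and taking $w_i:=\U^{n_1}\V^{n_1-1}$ times that preimage (which lies in $\D$) gives $\d\big((\U\V)^{h_i}y^{(k)}_i+w_i\big)=(\U\V)^{h_i}\big(\U^{n_1}\V^{n_1-1}x^{(k)}_i+\U^{n_1-1}\V^{n_1}x^{(k)}_{i-1}\big)$, which is \eqref{it:4}.
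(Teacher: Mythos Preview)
Your proof is correct, and it takes a genuinely different route from the paper's argument. The paper inducts by adding the \emph{largest} factor: given the decomposition for $\sum_{i=1}^{k}C_{n_i}$, it tensors with $C_\ell$ for $\ell=n_{k+1}\geqslant n_k$, invoking an explicit auxiliary lemma that computes a basis for $\C(k,n)+C_\ell$ with $\ell\geqslant n$. In that setup the new saw-edge generators are $y^{(k+1)}_i=y^{(k)}_i\otimes x^{(1)}_0$ (and $y^{(k)}_k\otimes x^{(1)}_1$ for $i=k+1$), so the tooth size $n$ is inherited from the old saw-edge; the correction elements $w_i$ are then built explicitly as $w_i\otimes x^{(1)}_0$ (plus one extra term involving the explicit basis element $b_k$ of the lemma for $i=k+1$). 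By contrast you peel off the \emph{smallest} factor $C_{n_1}$: your teeth $y^{(k)}_i=\bar x_{i-1}\otimes c$ borrow the size $n=n_1$ directly from $c\in C_{n_1}$, and the old saw-edge $\C(k-1,n_2)$ (with its possibly larger tooth size) gets absorbed into $\D$ via your modified $\bar y_j\otimes a,\ \bar y_j\otimes b$. For item \eqref{it:4} you replace the paper's explicit bookkeeping by the clean homological observation that $(\U\V)^{-1}H_*(\D)=0$ (from the long exact sequence, noting that $\F[\U,\V,(\U\V)^{-1}]=\F[\U^{\pm1},\V^{\pm1}]$ so the localized saw-edge homology is indeed free of rank one), whence each cycle $g_{i-1}\in\D$ bounds after multiplying by some $(\U\V)^{h_i}$.

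What each approach buys: the paper's explicit lemma gives concrete formulas for every $w_i$ and $h_i$, which is useful if one later needs quantitative control (e.g.\ on $h=\max h_i$, as in the proof of Proposition~\ref{prop:linear}). Your argument is shorter and more conceptual, avoiding the full case analysis of the lemma; the price is that $h_i$ is only shown to exist, not computed. Since the proposition and its application only require existence, this is harmless here.
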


Note that in (\ref {it:3}),(\ref {it:4}) above when we say $x^{(k)}_i , y^{(k)}_i  \in \C(k,n)$ it specifically means that we see  $\C(k,n)$ as a submodule of $\sum_{i=1}^{k} C_{n_i}$, take elements $x^{(k)}_i , y^{(k)}_i$ from $\C(k,n)$ and equip them  with the differential of the whole complex $\sum_{i=1}^{k} C_{n_i}$.
In order to prove Proposition \ref{prop:cnsum} we require a lemma. In the following lemma, we will construct a basis  $x^{(k+1)}_i, i=0,...,k+1, y^{(k+1)}_i, i=1,...,k+1$ and $\{ a_i, b_i, c_i, d_i \}_{i=1,...,k}$ that generates $C'':=\C(k,n)+C_\ell$ over $\F[\U,\V],$  where $ \ell \geqslant n$. On the module level, $C''$ decomposites as a direct sum $\C(k+1,n) \oplus \D_0$, where $\C(k+1,n)$ is generated by $x^{(k+1)}_i, i=0,...,k+1$ and  $y^{(k+1)}_i, i=1,...,k+1$, while $\D_0$ is generated by $\{ a_i, b_i, c_i, d_i \}_{i=1,...,k}$.

\begin{lemma}\label{le:kcn}
There exists a basis for the complex $C'':=\C(k,n)+C_\ell,$ where $ \ell \geqslant n$, and a subcomplex $\D_0$, such that
\begin{enumerate}
\item{on the module level, $C''=\C(k+1,n) \oplus \D_0$ as a module;}\label{it:1'}
\item{$C''/\D_0$ is isomorphic to $\C(k+1,n)$ as a chain complex;}\label{it:2'}
\item each {$x^{(k+1)}_i \in \C(k+1,n), i=0,...,k+1$ generates $(\U\V)^{-1}H_* \big(C'' \big);$}\label{it:3'}
\item{there exists $w_{k+1} \in \D_0$ such that
$$\d \big(y^{(k+1)}_{k+1} + (\U\V)^{-\ell+n}w_{k+1} \big)=\U^n\V^{n-1}x^{(k+1)}_{k+1}+ \U^{n-1}\V^n x^{(k+1)}_{k},$$
and we have
$$\d y^{(k+1)}_i =\U^n\V^{n-1}x^{(k)}_i + \U^{n-1}\V^n x^{(k)}_{i-1},  i=1,...,k $$
where $\d$ is the differential of the chain complex $C''$.
}\label{it:4'}
\end{enumerate}
\end{lemma}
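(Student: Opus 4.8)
The proof is a single explicit change of basis. Write $x_0,\dots,x_k,y_1,\dots,y_k$ for the generators of the saw-edge $\C(k,n)$, so that $\d y_i=\U^n\V^{n-1}x_i+\U^{n-1}\V^n x_{i-1}$, and write $p_0,p_1,q$ for the generators of $C_\ell=\C(1,\ell)$, so that $p_0,p_1$ are cycles and $\d q=\U^{\ell-1}\V^{\ell}p_0+\U^{\ell}\V^{\ell-1}p_1$. Since ``$+$'' denotes the tensor product over $\F[\U,\V]$, the $6k+3$ elements $x_i\otimes p_0$, $x_i\otimes p_1$, $x_i\otimes q$ ($0\le i\le k$) and $y_i\otimes p_0$, $y_i\otimes p_1$, $y_i\otimes q$ ($1\le i\le k$) form an $\F[\U,\V]$-basis of $C''$, and the Leibniz rule computes the differential: each $x_i\otimes p_0$ and $x_i\otimes p_1$ is a cycle, $\d(x_i\otimes q)=\U^{\ell-1}\V^{\ell}(x_i\otimes p_0)+\U^{\ell}\V^{\ell-1}(x_i\otimes p_1)$, $\d(y_i\otimes p)=\U^n\V^{n-1}(x_i\otimes p)+\U^{n-1}\V^n(x_{i-1}\otimes p)$ for $p\in\{p_0,p_1\}$, and $\d(y_i\otimes q)$ is the sum of the two analogous terms.

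First I would isolate the new saw-edge of length $k+1$. Set $x_i^{(k+1)}:=x_i\otimes p_0$ for $0\le i\le k$, set $x_{k+1}^{(k+1)}:=x_k\otimes p_1$, and set $y_i^{(k+1)}:=y_i\otimes p_0$ for $1\le i\le k$; comparing bigradings with the definition of saw-edge (up to the global shift produced by the tensor product) and reading off the formulas above gives $\d y_i^{(k+1)}=\U^n\V^{n-1}x_i^{(k)}+\U^{n-1}\V^n x_{i-1}^{(k)}$ for $1\le i\le k$ (here $x_i^{(k)}$ denotes $x_i\otimes p_0=x_i^{(k+1)}$), which is the second identity of (4'). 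For the last tooth, the key observation is
\[
\d(x_k\otimes q)=\U^{\ell-1}\V^{\ell}\,x_k^{(k+1)}+\U^{\ell}\V^{\ell-1}x_{k+1}^{(k+1)}=(\U\V)^{\ell-n}\bigl(\U^{n-1}\V^{n}x_k^{(k+1)}+\U^{n}\V^{n-1}x_{k+1}^{(k+1)}\bigr),
\]
so that the ``honest'' new corner $(\U\V)^{-(\ell-n)}(x_k\otimes q)$ already carries the clean saw-edge differential. One then chooses the integral generator $y_{k+1}^{(k+1)}$ (an $\F$-combination of the leftover basis elements in this bigrading, coming from the $q$-part) and a generator $w_{k+1}$ with $w_{k+1}\equiv x_k\otimes q$ modulo $(\U\V)^{\ell-n}$, so that $\d\bigl(y_{k+1}^{(k+1)}+(\U\V)^{-(\ell-n)}w_{k+1}\bigr)=\U^n\V^{n-1}x_{k+1}^{(k+1)}+\U^{n-1}\V^n x_k^{(k+1)}$; this is the first identity of (4'), with $h_{k+1}=\ell-n\ge 0$.

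Next I would assemble $\D_0$ and deduce (1') and (2'). The $4k$ basis elements not used above --- those coming from the $p_1$-part ($x_i\otimes p_1$ with $i<k$, and $y_i\otimes p_1$ with $1\le i\le k$) and the part of the $q$-part complementary to $y_{k+1}^{(k+1)}$ and $w_{k+1}$ --- span, after a triangular change of basis among themselves, a submodule $\D_0$ closed under $\d$; relabelling a basis of it as $\{a_i,b_i,c_i,d_i\}_{1\le i\le k}$ with $b_i=w_i$, one sees that $\D_0$ breaks (after the correction) into $k$ ``box'' pieces $a_i\mapsto b_i,c_i$ and $b_i,c_i\mapsto d_i$, exactly as in the model computation for $\C(1,2)+C_3$ in Figure~\ref{fig:23}. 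This correction leaves the saw-edge generators alone, so on the module level $C''=\C(k+1,n)\oplus\D_0$, which is (1'); and since $\D_0$ is a subcomplex while every $\d y_i^{(k+1)}$ reduces modulo $\D_0$ to the corresponding saw-edge differential, $C''/\D_0\cong\C(k+1,n)$ as a chain complex, which is (2').

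Finally, (3') follows from the Künneth formula: $(\U\V)^{-1}H_*(C'')\cong (\U\V)^{-1}H_*(\C(k,n))\otimes(\U\V)^{-1}H_*(C_\ell)$ is free of rank one over $\F[\U,\V,(\U\V)^{-1}]$ with free generator $[x_0\otimes p_0]=[x_0^{(k+1)}]$, and the relations $\U^n\V^{n-1}[x_i^{(k+1)}]=\U^{n-1}\V^n[x_{i-1}^{(k+1)}]$ ($1\le i\le k$) together with $\U^n\V^{n-1}[x_{k+1}^{(k+1)}]=\U^{n-1}\V^n[x_k^{(k+1)}]$ show by induction that every $[x_i^{(k+1)}]$ differs from the generator by a unit of $\F[\U,\V,(\U\V)^{-1}]$ and hence also generates. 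The main obstacle is the second step: one must verify that the leftover generators really can be corrected into an honest subcomplex $\D_0$ without perturbing the saw-edge generators, and pin down $y_{k+1}^{(k+1)}$ and $w_{k+1}$ so that the $(\U\V)$-divisibility recorded in (4') comes out exactly --- this is a bookkeeping exercise on the $6k+3$ basis elements, cleanest to run by induction on $k$ with Figure~\ref{fig:23} as the base case. Granting the lemma, Proposition~\ref{prop:cnsum} follows by induction on the number of summands, tensoring $0\to\D\to\sum_{i=1}^{k}C_{n_i}\to\C(k,n)\to0$ with $C_{n_{k+1}}$ (all modules here are free over the PID $\F[\U,\V]$, hence flat) and applying Lemma~\ref{le:kcn} to the subquotient $\C(k,n)+C_{n_{k+1}}$.
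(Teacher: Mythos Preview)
Your overall strategy---an explicit change of basis on the tensor product, exactly as the paper does---is correct, and your choices $x_i^{(k+1)}=x_i\otimes p_0$ ($i\le k$), $x_{k+1}^{(k+1)}=x_k\otimes p_1$, and $y_i^{(k+1)}=y_i\otimes p_0$ ($i\le k$) match the paper's verbatim. But your treatment of the last tooth contains a concrete error. You say $y_{k+1}^{(k+1)}$ should be ``an $\F$-combination of the leftover basis elements in this bigrading, coming from the $q$-part.'' When $\ell>n$ there is \emph{no} undecorated basis element of the form $\cdot\otimes q$ in bigrading $(-2n+1,-2(n+k)+1)$; the only undecorated basis element there is $y_k\otimes p_1$, and that is what the paper takes for $y_{k+1}^{(k+1)}$. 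The correcting element is $w_{k+1}=b_k:=x_k\otimes q+(\U\V)^{\ell-n}\,y_k\otimes p_1$, and one checks directly that $\partial\bigl(y_k\otimes p_1+(\U\V)^{-(\ell-n)}b_k\bigr)=\U^n\V^{n-1}x_{k+1}^{(k+1)}+\U^{n-1}\V^{n}x_k^{(k+1)}$, because the extra term $\U^{n-1}\V^n(x_k\otimes p_0+x_{k-1}\otimes p_1)$ coming from $\partial(y_k\otimes p_1)$ cancels against $(\U\V)^{-(\ell-n)}\partial b_k$.

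Second, your description of $\D_0$ as splitting into $k$ independent ``box'' summands is not correct and cannot be made correct by a further change of basis when $\ell>n$: in the paper's explicit basis $\{a_i,b_i,c_i,d_i\}$ one has $\partial a_i=\U^\ell\V^{\ell-1}b_i+\U^{n}\V^{n-1}c_{i+1}+\U^{n-1}\V^{n}c_i$ and $\partial b_i=\U^{n}\V^{n-1}d_{i+1}+\U^{n-1}\V^{n}d_i$ for $i<k$, so adjacent boxes are linked. This does not matter for the lemma (all you need is that $\D_0$ is a subcomplex), but it means the verification you defer as ``a bookkeeping exercise'' is not the one you describe. The cleanest fix is simply to write down the paper's $4k$ generators $a_i=y_i\otimes q$, $b_i$, $c_i=x_{i-1}\otimes q+(\U\V)^{\ell-n}y_i\otimes p_0$, $d_i=x_i\otimes p_0+x_{i-1}\otimes p_1$ and check the four differential formulas directly; closure under $\partial$ and items (1')--(4') then follow by inspection, with no induction on $k$ needed.
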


Here $x^{(k+1)}_i , y^{(k+1)}_i $ are all elements of $\C(k+1,n)$, seen as a submodule of $C''$, equipped with the differential of the whole complex $C''$.

\begin{proof}[Proof of Lemma \ref{le:kcn}]
Suppose complex $\C(k,n)$ is generated by $x_i^{(k)},  i=0,...,k$ and $y_i^{(k)},  i=1,...,k.$ with differentials
$\d y^{(k)}_i=\U^{n}\V^{n-1}x^{(k)}_i+\U^{n-1}\V^n x^{(k)}_{i-1},$ and  complex $C_\ell$ is generated by $x^{(1)}_0, x^{(1)}_1$ and $y^{(1)}_1$ with differential
$\d y_1^{(1)} = \U^{\ell-1}\V^{\ell}x_0^{(1)} +\U^{\ell}\V^{\ell-1}x_1^{(1)}. $ We will construct a basis for $C''=\C(k,n)+C_\ell.$
Set

\begin{align*}
& x^{(k+1)}_i=
\begin{cases}
x^{(k)}_k \otimes x^{(1)}_1 , \quad & i=k+1,\\
x^{(k)}_i \otimes x^{(1)}_0, \quad & i=0,1,...,k;
\end{cases}
\\
& y^{(k+1)}_i=
\begin{cases}
y^{(k)}_k \otimes x^{(1)}_1,  \quad &i=k+1,\\
y^{(k)}_i \otimes x^{(1)}_0 ,  \quad &i=1,...,k;
\end{cases}
\\
& a_i=y^{(k)}_i \otimes y^{(1)}_1,  \quad i=1,...,k;\\
& b_i=
\begin{cases}
 x^{(k)}_k \otimes y^{(1)}_1 + (\U\V)^{\ell-n}y^{(k)}_k \otimes x^{(1)}_1,  \quad & i=k,\\
y^{(k)}_{i+1} \otimes x^{(1)}_0 + y^{(k)}_i \otimes x^{(1)}_1 , \quad & i=1,...,k-1;
\end{cases}
\\
& c_i =  x^{(k)}_{i-1} \otimes y^{(1)}_1 + (\U\V)^{\ell-n}y^{(k)}_i \otimes x^{(1)}_0, i=1,...,k;\\
& d_i = x^{(k)}_i \otimes x^{(1)}_0 + x^{(k)}_{i-1} \otimes x^{(1)}_1, i=1,...,k.
\end{align*}

Differentials are
\begin{align*}
&  \d y^{(k+1)}_i =
\begin{cases}
\U^{n}\V^{n-1} x^{(k+1)}_{k+1} + \U^{n-1}\V^n x^{(k+1)}_k + \U^{n-1}\V^n d_k, \quad & i=k+1,\\
\U^{n}\V^{n-1} x^{(k+1)}_i + \U^{n-1}\V^n x^{(k+1)}_{i-1}, \quad & i=1,...,k;
\end{cases}
\\
& \d a_i=
\begin{cases}
\U^n\V^{n-1}b_k + \U^{n-1}\V^n c_k,   \quad & i=k,\\
\U^\ell\V^{\ell-1}b_i + \U^{n}\V^{n-1} c_{i+1}+\U^{n-1}\V^n c_i,   \quad & i=1,...,k-1;
\end{cases}
\\
& \d b_i=
\begin{cases}
\U^{\ell-1}\V^\ell d_k, \quad & i=k,\\
\U^n \V^{n-1} d_{i+1} +\U^{n-1} \V^n d_{i}, \quad & i=1,...,k-1;
\end{cases}
\\
& \d c_i = \U^\ell \V^{\ell-1} d_i,  \quad i=1,...,k.
\end{align*}
Let $\D_0$ be the subcomplex generated by $\{ a_i, b_i, c_i, d_i \}_{i=1,...,k}$. Conditions (\ref{it:1'}),(\ref{it:2'}) and (\ref{it:3'}) are all straightforward to verify. To show (\ref{it:4'}),  let $w_{k+1}=b_k$, it then follows that $\d \big(y^{(k+1)}_{k+1} + (\U\V)^{-\ell+n}w_{k+1} \big)=\U^n\V^{n-1}x^{(k+1)}_{k+1}+ \U^{n-1}\V^n x^{(k+1)}_{k}.$ This completes the proof of Lemma \ref{le:kcn}.
\end{proof}

\begin{remark}
In Lemma \ref{le:kcn} if $\ell=n$, $\D_0$ is actually a direct summand and $\C(k,n)+C_n \simeq \C(k+1,n).$
\end{remark}

\begin{proof}[Proof of Proposition \ref{prop:cnsum}]
We will prove the statement by induction over $k$. $C_n \simeq \C(1,n)$ follows directly from Proposition \ref{prop:rephrase}.

Suppose the proposition holds for $\sum_{i=1}^{k} C_{n_i}$, where $\sum_{i=1}^{k} C_{n_i}=\C(k,n)\oplus \D$ as a module. Consider $\widetilde{C}:=\sum_{i=1}^{k} C_{n_i}+C_\ell,$ where $2\leqslant n=n_1\leqslant .. \leqslant n_{k}\leqslant \ell.$ From Lemma \ref{le:kcn} we know that on the module level $\big(\C(k,n)\oplus \D\big)\otimes C_\ell=\big(\C(k,n)\otimes C_\ell\big)\oplus \big(\D\otimes C_\ell \big)=\C(k+1,n)\oplus \D_0 \oplus \big(\D\otimes C_\ell \big)$. Note that this gives a basis of  $\widetilde{C}$ that extends the basis of $C''$, which has been chosen in the proof of Lemma \ref{le:kcn}.

Define $\D_1:= \D + C_\ell  $ as chain complexes. Note that $\D_1$ has underlying module $\D \otimes C_\ell  $, which appeared in the above expression, now further equipped with the subcomplex differential. 

Define $\widetilde{\D}:=\D_0 \oplus \D_1  $ first as a module, then equip it with the differential of the chain complex $\widetilde{C}$. Observe that $\widetilde{\D}$ is a subcomplex of $\widetilde{C}$ because both $\D_0$ and $\D_1$  are subcomplexes. We will prove that the complex $\widetilde{C}$ together with the subcomplex $\widetilde{\D}$ under the basis chosen above satisfy all $4$ requirements in Proposition \ref{prop:cnsum}.

Obviously, $\widetilde{C}=\C(k+1,n) \oplus \widetilde{\D} $ as a module. This completes (\ref{it:1}).

To prove (\ref{it:2}), observe that $\widetilde{C}/\widetilde{\D}$ is isomorphic to $\big( \widetilde{C}/\D_1 \big)/\D_0$  as a chain complex. By definition, $\widetilde{C}/\D_1=\big(\sum_{i=1}^{k} C_{n_i}+C_\ell\big)/(\D+C_\ell)$. The natural map between $\big(\sum_{i=1}^{k} C_{n_i}+C_\ell\big)/(\D+C_\ell)$ and $\sum_{i=1}^{k} C_{n_i}/\D+C_\ell$ is a chain complex isomorphism. By induction hypothesis $\sum_{i=1}^{k} C_{n_i}/\D$ is isomorphic to $\C(k,n)$ and finally according to Lemma \ref{le:kcn}, $\big( \C(k,n)+C_\ell \big)/\D_0$ is isomorphic to $\C(k+1,n)$ as a chain complex, finishing the proof of (\ref{it:2}).

From our construction of $\C(k+1,n)$ it is not hard to see
\begin{align*}
x^{(k+1)}_i=
\begin{cases}
x^{(k)}_k \otimes x^{(1)}_1 , \quad & i=k+1,\\
x^{(k)}_i \otimes x^{(1)}_0, \quad & i=0,1,...,k;
\end{cases}
\end{align*}
generates $(\U\V)^{-1}H_* \big(\widetilde{C}\big)$, proving (\ref{it:3}).

In order to show (\ref{it:4}), suppose there exists some $w_i\in \D$ and nonnegative $s_i$ for each $i$ such that 
$$\d \big(y^{(k)}_i + (\U\V)^{-h_i}w_i \big)=\U^n\V^{n-1}x^{(k)}_i + \U^{n-1}\V^n x^{(k)}_{i-1}.$$
For $i=1,...,k,$
\begin{align*}
\d \big(y^{(k+1)}_i + (\U\V)^{-h_i}w_i \otimes x^{(1)}_0 \big)&=\U^n\V^{n-1}x^{(k)}_i\otimes x^{(1)}_0 + \U^{n-1}\V^n x^{(k)}_{i-1}\otimes x^{(1)}_0\\
&=\U^n\V^{n-1}x^{(k+1)}_i +\U^{n-1}\V^{n}x^{(k+1)}_{i-1}
\end{align*}
 If $i=k+1$, then
\begin{align*}
\d \big(y^{(k+1)}_{k+1} + (\U\V)^{-h_i}w_i \otimes x^{(1)}_1  + (\U\V)^{-(l-n)}b_k \big)&=\U^n\V^{n-1}x^{(k)}_i\otimes x^{(1)}_1 + \U^{n-1}\V^n x^{(k)}_{k}\otimes x^{(1)}_0\\
&=\U^n\V^{n-1}x^{(k+1)}_{k+1} +\U^{n-1}\V^{n}x^{(k+1)}_{k}
\end{align*} 
satisfying the requirement of (\ref{it:4}).
\end{proof}

\begin{proposition}
$\C^*(k,m)$ is a subcomplex of $\sum_{i=1}^{k}  C_{m_i}^{*},$ where  $2\leqslant m=m_1\leqslant .. \leqslant m_{k}.$ In particular,  $\sum^k_{i=0} \U^{k-i}\V^i x^{(k),*}_i$ is a generator of $(\U\V)^{-1}H_*$.
\end{proposition}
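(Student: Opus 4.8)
The plan is to deduce the statement by dualizing Proposition \ref{prop:cnsum}. Apply that proposition to the family $m=m_1\leqslant\cdots\leqslant m_k$: it produces a short exact sequence of chain complexes $0\to\D\to\sum_{i=1}^k C_{m_i}\to\C(k,m)\to 0$, which by part (\ref{it:1}) is split on the level of $\F[\U,\V]$-modules. All three complexes are bounded and finitely generated free over $\F[\U,\V]$, so applying $\mathrm{Hom}_{\F[\U,\V]}(-,\F[\U,\V])$ yields another (module-split) short exact sequence, compatible with the canonical identification $\big(\sum_{i=1}^k C_{m_i}\big)^{*}\cong\sum_{i=1}^k C_{m_i}^{*}$ (dual of a finite tensor product of finite free complexes). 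Hence we obtain $0\to\C(k,m)^{*}\to\sum_{i=1}^k C_{m_i}^{*}\to\D^{*}\to 0$, which exhibits $\C^{*}(k,m)=\C(k,m)^{*}$ --- with its dual basis $x^{(k),*}_i,\,y^{(k),*}_i$ and precisely the differentials in the definition of the inverse saw-edge --- as a subcomplex of $\sum_{i=1}^k C_{m_i}^{*}$. (The absolute bigradings are those of $\C(k,m)$ with signs reversed, matching the stated ones.)

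For the ``in particular'' clause I would first compute $(\U\V)^{-1}H_{*}(\C^{*}(k,m))$ directly inside the saw-edge. After inverting $\U\V$ the generators $y^{(k),*}_i$ are cycles, and a combination $\sum_{i=0}^k b_i x^{(k),*}_i$ is a cycle precisely when the coefficient of each $y^{(k),*}_i$ in its differential telescopes to zero, i.e.\ when $b_{i-1}\V=b_i\U$ for all $i$; normalizing $b_0=\U^k$ gives exactly $\xi_0:=\sum_{i=0}^k\U^{k-i}\V^i x^{(k),*}_i$, so the module of cycles among the $x^{(k),*}_i$ is free of rank one on $\xi_0$. On the other hand, after inverting $\U\V$ the elements $\d x^{(k),*}_i$ span the whole free module on the $y^{(k),*}_i$ (peel them off one at a time, starting from $\d x^{(k),*}_0=\U^{m-1}\V^m y^{(k),*}_1$, whose coefficient is a unit). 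Therefore $(\U\V)^{-1}H_{*}(\C^{*}(k,m))$ is free of rank one, generated by $[\xi_0]$.

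It then remains to see that the inclusion $\C^{*}(k,m)\hookrightarrow\sum_{i=1}^k C_{m_i}^{*}$ induces an isomorphism on $(\U\V)^{-1}H_{*}$; granting this, $[\xi_0]$ also generates $(\U\V)^{-1}H_{*}\big(\sum_{i=1}^k C_{m_i}^{*}\big)$, which is the claim. By Proposition \ref{prop:cnsum}(\ref{it:3}) the class of $x^{(k)}_0$ generates $(\U\V)^{-1}H_{*}\big(\sum_{i=1}^k C_{m_i}\big)$, which (as for any knot complex, Section 6, and tensor products thereof) is free of rank one; since the quotient map $\sum_{i=1}^k C_{m_i}\to\C(k,m)$ carries $x^{(k)}_0$ to the generator $x^{(k)}_0$ of $(\U\V)^{-1}H_{*}(\C(k,m))$, that map is an isomorphism on $(\U\V)^{-1}H_{*}$. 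The long exact sequence of $0\to\D\to\sum_{i=1}^k C_{m_i}\to\C(k,m)\to 0$ then forces $(\U\V)^{-1}H_{*}(\D)=0$; a bounded complex of finitely generated free modules that becomes acyclic after inverting $\U\V$ is contractible there, so its dual $\D^{*}$ is too. Feeding $(\U\V)^{-1}H_{*}(\D^{*})=0$ into the long exact sequence of $0\to\C^{*}(k,m)\to\sum_{i=1}^k C_{m_i}^{*}\to\D^{*}\to 0$ gives the required isomorphism.

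The content here is almost entirely bookkeeping: the points needing care are that the dualization of Proposition \ref{prop:cnsum} really puts $\C^{*}(k,m)$ in the subcomplex slot (which is why the module-level splitting in part (\ref{it:1}) matters) and that the dual basis and its differentials reproduce the defining formulas for the inverse saw-edge. The one mildly substantive step is the vanishing $(\U\V)^{-1}H_{*}(\D)=0$ and its transfer to $\D^{*}$, but this is forced by Proposition \ref{prop:cnsum}(\ref{it:3}) together with the rank-one freeness of localized knot homology.
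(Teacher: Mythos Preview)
Your argument is correct and, for the subcomplex assertion, matches the paper exactly: dualize the module-split short exact sequence coming from Proposition~\ref{prop:cnsum}, so that the quotient $\C(k,m)$ becomes a subcomplex of the dual. (The paper compresses this to ``there is no incoming arrow to $\C(k,m)$.'')

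For the generator statement the paper only says ``easy to verify''; your route via $(\U\V)^{-1}H_*(\D^*)=0$ is valid, though the phrase ``bounded complex of finitely generated free modules'' deserves a little care, since $\D\otimes R'$ is not a bounded $\Z$-graded complex of $R'$-modules in the usual sense --- one clean fix is to restrict to a fixed Alexander grading, where the complex becomes finitely generated free over the PID $\F[(\U\V)^{\pm 1}]$, and then the standard splitting argument gives contractibility. A shorter verification, likely what the paper intends, is available directly: by Proposition~\ref{prop:cnsum}(\ref{it:3}) each $x^{(k)}_i$ is a cycle in $\sum_{i} C_{m_i}$, so dually no $x^{(k),*}_i$ ever appears in the image of $\d^*$; hence your cycle $\xi_0$ is not a boundary in the full dual complex, and since the localized homology is free of rank one it must generate. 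This bypasses the detour through $\D^*$ entirely.
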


\begin{proof}
Simply take the dual complex of $\sum_{i=1}^{k}  C_{m_i}.$  Since there is no incoming arrow to $\C(k,m)$, $\C^*(k,m)$ is a subcomplex of $\sum_{i=1}^{k}  C_{m_i}^{*}.$ It is also easy to verify that  $\sum^k_{i=0} \U^{k-i}\V^i x^{(k),*}_i$ generates $(\U\V)^{-1}H_*$.
\end{proof}

\begin{proof}[Proof of Proposition \ref{prop:linear}]

Consider the complex $C'=\sum_{i=1}^{k} C_{n_i} + \sum_{i=1}^{k} C_{m_i}^{*}$. We fix a basis for $\sum_{i=1}^{k} C_{n_i}$  according to Proposition \ref{prop:cnsum}. Similarly fix a basis for $\sum_{i=1}^{k} C_{m_i}$ and take the dual basis. This gives a basis for $C'$. We will suppress the $(k)$ from upper indices since both saw-edge complexes have length $k$.

Suppose $f$ is a maximal self-local equivalence of $C'$. In particular $f$ is $\U,\V$-equivariant, preserves bigradings and maps one generator of $(\U \V)^{-1}H_*$ to another. Observe that there is no vertical arrow in $C'$, therefore all generators in $C'_{\mathrm{conn}}$ survive into $H_{\mathrm{vert}}(C'_{\mathrm{conn}})$. We only need to show $C'_{\mathrm{conn}}=$ im$f$ contains more than one generators of homology.

Recall that for each $y_i \in \sum_{i=1}^{k} C_{n_i}, i=1,...,k$, there exists some $w_i \in \sum_{i=1}^{k} C_{n_i}$, such that 
$$\d \big(y_i + (\U\V)^{-h_i}w_i \big)=\U^n\V^{n-1}x_i + \U^{n-1}\V^n x_{i-1},$$
where $h_i$ is a nonnegative integer.
Define
$$\widetilde{y}_i := w_i + (\U\V)^{h_i}y_i ,$$
and let 
$h:=$ max $\{ h_i \co i=1,...,k \}.$

 Set
\begin{align*}
&\alpha = \sum^k_{i=1} (\U\V)^{h-h_{i}} \widetilde{y_i} \otimes y_i^* + (\U\V)^{h+n-m} \sum^k_{i=0} x_i \otimes x^*_i , \\
& \beta = \sum^k_{i=0} \U^{k-i}\V^i x_0 \otimes x^*_i.
\end{align*}

We want to point out that  $\alpha$ is an actual generator of the chain complex, meaning $\alpha$ contains a non-trivial component with no $\U,\V$ decoration. This fact turns out to be necessary in order for the proof to work. On the other hand, $\beta$ is not an actual generator of the chain complex. 

We compute the bigradings of $\alpha$ and $\beta$:
\begin{align*}
& \gr (\alpha)=(-2(m+h-n),-2(m+h-n)),\\
& \gr (\beta)= \gr (x_0)= (-2k, 0).
\end{align*}
  Since tensoring the complex $C'$ by $C_m+C_m^*$ doesn't change its local equivalence type, we can assume that $k$ is large enough such that $-2k<-2(m+h-n)$.

In fact, $\alpha$ and $\beta$ are both generators of $(\U \V)^{-1}H_*$.  Here we slightly abuse the notation: when we refer to an element in $C'$ we actually mean its counterpart in $C'\otimes _{\F[\U,\V]}\F[\U,\V,(\U\V)^{-1}].$
It is a simple computation that both $\alpha$ and $\beta$ are in ker $\d$.  To show they are not in the image, first assume $\d \big( \sum z\otimes r \big)=\beta$, where each $z \in \sum_{i=1}^{k} C_{n_i}$ and each $r \in  \sum_{i=1}^{k} C_{m_i}^{*}$. Non-trivial components of $\d \big( \sum z\otimes r \big)$ are either of the form $(\d z) \otimes r $ or $z \otimes (\d r)$. Consider the term $\U^{k-i}\V^i x_0 \otimes x^*_i$ in $\beta$.  Since each $x_i$ is in ker $\d$, $x_i^*$ never occurs in the image of $\d$ as a non-trivial component. Hence $\U^{k-i}\V^i x_0 \otimes x^*_i$ is of the form $\sum (\d z)\otimes r$. It follows that $x_0$ has to be in the image, a contradiction. Supposing $\d \big( \sum z\otimes r \big)=\alpha$, similarly,  each term $(\U\V)^{h+n-m} \sum^k_{i=0} x_i \otimes x^*_i$ in $\alpha$ is of the form $\sum (\d z)\otimes r$. Since  $ y_i$ never occurs in the image as a non-trivial component, each term  $(\U\V)^{h-h_{i}} \widetilde{y_i} \otimes y_i^*$ is of the form $\sum  z\otimes (\d r).$  A brief examination shows that the only suitable elements for  $ z\otimes r$ are $\widetilde{y_i} \otimes x_j^*$. However,  there does not exist any combination of $\d \big( \widetilde{y_i} \otimes x_j^* \big)$ equal to $\alpha$. This proves that both $\alpha$ and $\beta$ generate $(\U \V)^{-1}H_*$. 

According to the property that self-local equivalence $f$ maps one generator of $(\U \V)^{-1}H_*$ to another, we know $f(\alpha)$ and $f(\beta)$ are both non-trivial in $C'_{\mathrm{conn}}$. However, since $f$ preserves bigradings, $f(\alpha)\neq f(\beta)$. Moreover, there does not exist any positive integers $s,t$ such that $f(\alpha)=\U^s\V^t f(\beta)$ or $f(\beta)=\U^s\V^t f(\alpha)$. We conclude that $C'_{\mathrm{conn}}$ contains at least $2$ generators of homology.

\end{proof}

\bibliographystyle{amsalpha}
\bibliography{paper}

\end{document}